\title{Universal regularity estimates for solutions to fully nonlinear elliptic equations with oblique boundary data}
\author{Junior da S. Bessa, Jo\~{a}o Vitor  da Silva \\$\&$ \\ Gleydson C. Ricarte}
\newlength{\hchng}
\newlength{\vchng}
\def \diam {\mathrm{diam}}
\newcommand{\pe}{P_{\varepsilon}}
\newcommand{\defeq}{\mathrel{\mathop:}=}
\newtheorem{theorem}{Theorem}[section]
\newtheorem{lemma}[theorem]{Lemma}
\newtheorem{proposition}[theorem]{Proposition}
\newtheorem{corollary}[theorem]{Corollary}
\theoremstyle{definition}
\newtheorem{statement}[]{Statement}
\newtheorem{definition}[theorem]{Definition}
\theoremstyle{remark}
\newtheorem{remark}[theorem]{Remark}
\numberwithin{equation}{section}
\newcommand{\intav}[1]{\mathchoice {\mathop{\vrule width 6pt height 3 pt depth  -2.5pt
\kern -8pt \intop}\nolimits_{\kern -6pt#1}} {\mathop{\vrule width
5pt height 3  pt depth -2.6pt \kern -6pt \intop}\nolimits_{#1}}
{\mathop{\vrule width 5pt height 3 pt depth -2.6pt \kern -6pt
\intop}\nolimits_{#1}} {\mathop{\vrule width 5pt height 3 pt depth
-2.6pt \kern -6pt \intop}\nolimits_{#1}}}
\begin{document}
\maketitle

\begin{abstract}
In this work, we establish universal moduli of continuity for viscosity solutions to fully nonlinear elliptic equations with oblique boundary conditions, whose general model is given by
$$
\left\{
\begin{array}{rcl}
F(D^2u,x) &=& f(x) \quad \mbox{in} \,\,   \Omega\\
\beta(x) \cdot Du(x) + \gamma(x) \, u(x)&=& g(x)  \quad \mbox{on} \,\,  \partial \Omega.
\end{array}
\right.
$$
Such regularity estimates are achieved by exploring the integrability properties of $f$ based on different scenarios, making a $\text{VMO}$ assumption on the coefficients of $F$, and by considering suitable smoothness properties on the boundary data $\beta, \gamma$ and $g$. Particularly, we derive sharp estimates for borderline cases where $f \in L^n(\Omega)$ and $f\in p-\textrm{BMO}(\Omega)$. Additionally, for source terms in $L^p(\Omega)$, for $p \in (n, \infty)$, we obtain sharp gradient estimates. Finally, we also address Schauder-type estimates for convex/concave operators and suitable H\"{o}lder data.

\medskip

\noindent \textbf{Keywords:} Regularity theory, smoothness estimates, fully non linear elliptic equations, oblique boundary data.

\medskip

\noindent \textbf{AMS Subject Classifications:} 35B65, 35D40, 35J25, 35J60.
\end{abstract}

\tableofcontents
\newpage

\section{Introduction}

\hspace{0.5 cm}
In this work, we focus our attention on studying the sharp regularity for viscosity solutions of fully nonlinear elliptic models with oblique boundary conditions as follows
\begin{equation}\label{1}
\left\{
\begin{array}{rcl}
F(D^2u(x), x) &=& f(x) \quad \mbox{in} \,\,   \Omega\\
 \beta(x) \cdot Du(x) + \gamma(x) \, u(x)&=& g(x)  \quad \mbox{on} \,\,  \partial \Omega,
\end{array}
\right.
\end{equation}
where $\Omega \subset \mathbb{R}^n$ is a bounded and open domain with regular boundary $\partial \Omega$, $f \in L^p(\Omega)$, with $p\geq p_0(n,\lambda, \Lambda)=n- \varepsilon_0(n,\lambda,\Lambda) \in \left(\frac{n}{2},n \right)$, where $\varepsilon_0$ is the constant introduced by Escauriaza in \cite{Esc} and $0<\lambda \le\Lambda$ are the ellipticity constants of $F$. Additionally, we consider the case where $f \in p-\textrm{BMO}(\Omega)$ (to be defined soon). Moreover, the boundary data $\beta$, $\gamma$ and $g$ belong to suitable H\"{o}lder spaces, 
and $F:\textrm{Sym}(n) \times \Omega \to \mathbb{R}$ is a second-order, fully nonlinear, uniformly elliptic operator (to be clarified soon), where $\textrm{Sym}(n)$ denotes the set of $n \times n$ symmetric matrices. For more details on such assumptions and additional definitions, please refer to Section \ref{Sec02}, see assumptions $(\mathrm{A1})$ and $(\mathrm{A1})$.

In the succeeding part, we will summarize our optimal regularity estimates in the following classification table (see Theorems \ref{holderoptimal}, \ref{theorem4.2} and \ref{Holderoptimalgradientestimate} for more details):

\begin{table}[h]
\centering
\begin{tabular}{c|c|c}
{\it Source term} & {\it Boundary data}  & {\it Optimal regularity for solutions} \\
\hline
$f\in L^{p}(\Omega), \ p\in[n-\varepsilon_{0},n)$ & $\beta,\gamma,g\in C^{0,\alpha}(\partial \Omega)$ & $C^{0, 2-\frac{n}{p}}(\overline{\Omega})$ \\
\hline
$f\in L^{n}(\Omega)$ & $\beta,\gamma,g\in C^{0,\alpha}(\partial \Omega)$ & $C^{0, \text{Log-Lip}}(\overline{\Omega})$ \\
\hline
$f\in L^{p}(\Omega), \ n<p< \infty$ & $\beta,\gamma,g \in C^{0,\alpha}(\partial\Omega)$ & $C^{1,\min\left\{\alpha_{\text{Hom}}^{-},\frac{p-n}{p}\right\}}(\overline{\Omega})$,\\
\end{tabular}\caption{H\"{o}lder, Log-Lipschitz and gradient estimates}\label{Table01}
\end{table}
where the functional space $C^{0, \text{Log-Lip}}(\overline{\Omega})$ implies that $u$ satisfies the following bounds
\begin{equation}\label{Def-Log-Lip}
  \displaystyle [u]_{0, \text{Log-Lip}, \overline{\Omega}} \defeq \sup_{x, y \in \overline{\Omega} \atop{x \neq y}} \frac{|u(x)-u(y)|}{|x-y||\ln(|x-y|)|} \le \mathbf{C}<\infty.
\end{equation}
In such a context, $\alpha_{\text{Hom}} \in (0, 1]$ denotes the optimal H\"{o}lder regularity exponent for the first derivatives of solutions to the homogeneous problem with frozen coefficients, and $\kappa^{-}$ means that we can choose any value $\tau \in (0, \kappa)$.

Moreover, we must notice that the above definition \eqref{Def-Log-Lip} yields the inclusions
$$
 C^{0, \alpha}(\overline{\Omega}) \supset C^{0, \text{Log-Lip}}(\overline{\Omega}) \supset C^{0, 1}(\overline{\Omega}) \quad \text{for every}\,\,\, \alpha \in (0, 1).
$$

Finally, by assuming convexity/concavity conditions on the operator, we observe the following classification scenarios (see, Theorems \ref{theorem4.4.5} and \ref{ThmSchauder} for more details):

\begin{table}[h]
	\centering
\begin{tabular}{c|c|c}
		{\it Source term} & {\it Boundary data}  & {\it Optimal regularity for solutions} \\
		\hline
		$f\in p-\textrm{BMO}(\Omega)\cap L^{p}(\Omega), p\in[n-\varepsilon_{0},\infty]$ & $\beta,\gamma,g\in C^{1,\alpha}(\partial \Omega)$ & $C^{1, \text{Log-Lip}}(\overline{\Omega})$  \\
		\hline
		$f\in C^{0,\alpha}(\Omega)$ & $\beta,\gamma,g\in C^{1,\alpha}(\partial\Omega)$ & $C^{2,\alpha}(\overline{\Omega})$\\
	\end{tabular}\caption{Higher order estimates}
\end{table}
where the property that $u \in C^{1, \text{Log-Lip}}(\overline{\Omega})$ means that the following semi-norm is finite
\begin{equation}\label{BordEst02}
  \displaystyle  [u]_{1, \text{Log-Lip}, \overline{\Omega}} \defeq \sup_{x, y \in \overline{\Omega} \atop{x \neq y}} \frac{|u(x)-u(y)-D u(y)\cdot (x-y)|}{|x-y|^2|\ln(|x-y|)|}.
\end{equation}

Similarly to above argument, we have the following inclusions
$$
 C^{1, \alpha}(\overline{\Omega}) \supset C^{1, \text{Log-Lip}}(\overline{\Omega}) \supset C^{1, 1}(\overline{\Omega}) \quad \text{for every}\,\,\, \alpha \in (0, 1).
$$

One of the interesting features of such an approach in obtaining the borderline estimates \eqref{BordEst02}
is that it can be quite adjusted to obtain Hessian estimates for solutions of a particular class of nonlinear oblique problems, see final comments in Section \ref{Section06} for more details.

We must highlight that while we follow the outline of \cite{LiZhang} and \cite{ET}, new challenges arise due to the presence of tangential derivatives on the boundary condition in our scheme of iterative approximation. Additionally, in some setting, we improve and optimize the moduli of continuity within our geometric approach in each integrability scenario.

Therefore, as far as we know trying to classify the moduli of continuity for solutions of general nonlinear problems like \eqref{1} under suitable assumptions on data is an open and emerging issue in the modern area of elliptic regularity theory.

 We emphasize that the aforementioned assumptions (see, Table \ref{Table01}) could be relaxed further. Nevertheless, we have decided to state our estimates based on these conditions to uphold the manuscript's presentation cleaner. In particular, our findings are noteworthy even in the simplest model case driven by Bellman-type equations involving Neumann boundary conditions:
$$
\left\{
\begin{array}{rclcl}
	\displaystyle\sup_{\iota \in \mathcal{A}}\left\{\mbox{Tr}(\mathfrak{A}^{\iota}(x)D^{2}u)-f ^{\iota}(x)\right\} &=& 0& \mbox{in} & \Omega \\
	\overrightarrow{n}(x)\cdot Du(x)&=& \mathbf{c}_0 &\mbox{on}& \partial \Omega.
\end{array}
\right.
$$

We must highlight that the interest in studying models like \eqref{1} is justified by their numerous applications in various fields beyond mathematics. For example, these models find application in the theory of Markov processes (such as in the case of Brownian motion -  see \cite{Taira20}) the equation governing the oblique condition, i.e.,
	\begin{eqnarray}\label{2}
		\beta(x)\cdot Du(x)+\gamma(x) u(x)=g(x) \quad \text{on} \quad \partial \Omega
	\end{eqnarray}
naturally appears in such scenarios. Here, the first term on the left side of \eqref{2} describes the reflection process along the $\beta$ vector field, while the second one is related to the absorption phenomenon.

Another interesting scenario where a general condition like \eqref{2} occurs in a geometric context is in the long-time existence and convergence for the inverse mean curvature flow with a Neumann boundary condition driven by geometric evolution PDEs. For a modern compendium on mean curvature flow and related issues, refer to \cite{RitSin10}.

In addition to the above examples, models like \eqref{1} find applications in the theory of celestial bodies, shocks reflected in transonic flows, and stochastic control theory, among other contexts. This breadth of applications is described in more detail in Lieberman's fundamental work in \cite{Lieb01}.
	
By way of illustration, the simplest example of the regular oblique boundary condition is the Neumann condition, where $\beta = \overrightarrow{\textbf{n}}$ and $\gamma = 0$, with $\overrightarrow{\textbf{n}}$ representing the outward normal vector of $\partial \Omega$. Therefore, we can view the condition \eqref{2} as a sort of generalization of the Neumann boundary condition.

\subsection*{State-of-the-Art: form Dirichlet to oblique boundary condition}

\hspace{0.6cm}Before delving into our results, we will briefly outline some of the relevant literature on fully nonlinear models and their connections with interior/boundary regularity and general boundary conditions.

The study of optimal regularity in the scenario of fully nonlinear elliptic equations has been widely explored in recent years for a number of researchers in various contexts. For the start point of this mathematical journey, we must quote as our motivation the seminal work by Teixeira \cite{ET}, which brings a spotlight on the universal moduli continuity  for solutions of fully nonlinear elliptical PDEs of the form
$$
F(D^{2}u,x)=f(x) \quad \mbox{in} \quad \Omega.
$$
The optimal regularity achieved in such a manuscript is grounded in the integrability properties of the source term $f$ based on different scenarios. In this context, interior regularity, such as optimal $C_{\text{loc}}^{0, \alpha}$, $C_{\text{loc}}^{0, \text{Log-Lip}}$, $C_{\text{loc}}^{1,\alpha}$ and $C_{\text{loc}}^{1, \text{Log-Lip}}$ are addressed. We may summarize such results in the following table of moduli of continuity:

\begin{table}[h]
\centering
\begin{tabular}{|c|c|c|c}
\cline{1-3}
 ${\bf f \in L^{p}(B_1)}$ & {\bf Optimal regularity}  & {\bf Assumption on $F$}\\\cline{1-3}
 $n-\varepsilon_0< p < n$ & $C_{loc}^{0, \varsigma}(B_1)$ & \text{Uniformly elliptic}\\\cline{1-3}
 $p=n$ & $C_{loc}^{0,\textrm{Log-Lip}}(B_1)$ & \text{Uniformly elliptic} \\\cline{1-3}
 $n<p <\infty$& $C_{loc}^{1, \zeta}(B_1)$ & \text{Uniformly elliptic} \\\cline{1-3}
$\text{BMO} \supset L^{\infty}$ & $C_{loc}^{1, \textrm{Log-Lip}}(B_1)$ & \text{Uniformly elliptic and convex/concave} \\\cline{1-3}
\end{tabular}
\end{table}

Teixeira's results must be understood, to a certain extent, as an extension of Caffarelli's trailblazing work in \cite{Caff1} (see also \cite{CC} for an essay on these regularity issues).  We also recommend that readers refer to Da Silva-Teixeira's work \cite{daSilTei19} for the parabolic counterpart of these results. Furthermore, we also highlight that similar borderline regularity results to the ones in \cite{ET} were established by Daskalopoulos \textit{et al} in \cite{DKM14}, specifically in the context of Lorentz and Morrey spaces.

A few years later, in \cite{DaSilNorn21}, Da Silva and Nornberg developed a full regularity study along similar lines of \cite{Norn19} and \cite{ET}. Specifically, they considered a class of fully nonlinear elliptic operators admitting general Hamiltonian terms with unbounded ingredients in the following configuration:
$$
\mathcal{G}(D^{2}u,Du,x)=F(D^{2}u,x)+b(x)|Du(x)|+\mu(x)|Du(x)|^{m}= f(x) \quad \mbox{in} \quad \Omega,
$$
where $b\in L^{\varrho}(\Omega)$, $\mu\in L^{q}(\Omega)$ for $\varrho, q \in(n,\infty]$, and $m\in(0,2]$ with $m\neq 1$. One of the interesting aspect of this work lies in the dependence of the governing operator on a term of order one, specifically on the gradient term $Du$, which can enjoy a super-linear and sub-quadratic growth. Not only does this result in enhanced generality of the class of operators compared to \cite{ET}, but they also address Schauder-type estimates for such operators.

Recently, Amaral and Dos Prazeres in \cite{AP}, proved optimal regularity for fully nonlinear elliptic models under Dirichlet boundary conditions as follow
$$
\left\{
\begin{array}{rclcl}
	F(D^2u,Du,x) &=& f(x)& \mbox{in} & \Omega \\
	u(x)&=& \varphi(x) &\mbox{on}& \partial \Omega.
\end{array}
\right.
$$
In this context, the governing operator also depends on first-order terms of the solutions. Hence, unlike \cite{DaSilNorn21}, the oscillation of the operator in quest also depends on such first-order terms of the operator. They address a moduli of continuity's classification scheme as the one in \cite{ET}. Additionally, $C^{2,\alpha}$ type estimates were also obtained for this class of problems. We also refer to Lian-Zhang's work \cite{LiZhang20} for boundary point-wise $C^{1,\alpha}$ and $C^{2,\alpha}$ regularity for viscosity solutions of certain classes of fully nonlinear elliptic equations.

Turning back to the regularity results for general boundary data, we must mention that fully nonlinear elliptic problems with oblique boundary conditions have been extensively studied in the last few decades due to intrinsic connection with problems from nonlinear geometric PDEs in geometric analysis, stochastic control theory and homogenization processes just to mention a few (see \cite{Aris03},  \cite{CK23} and \cite{Lopez13} for related works). A pivotal concern in this type of problem revolves around the existence and uniqueness of solutions (in the viscosity sense). In this context, we must quote the Lieberman-Trudinger's pioneering work \cite{LieTru86} dating back 1986, where the authors study fully nonlinear second order uniformly elliptic equations with nonlinear oblique boundary conditions as follows
$$
\left\{
\begin{array}{rclcl}
F(x, u, Du, D^2u) &=& 0& \mbox{in} &   \Omega \\
\mathbf{G}(x, u, Du)&=& 0 &\mbox{on}& \partial \Omega,
\end{array}
\right.
$$
In such a context, under appropriate ``natural conditions'' on the nonlinearities, classical and H\"{o}lder estimates for second derivatives are obtained. Furthermore, an existence result was addressed. Such a work is a natural extension of several earlier works, such as Lieberman's work \cite{Lieb84} and \cite{Lieb86} on quasilinear equations and Trudinger's work \cite{Trud83} and \cite{Trud84} on fully nonlinear models with Dirichlet boundary conditions.

Subsequently, in 1991, Ishii, in \cite{Ishii91}, established, under certain assumptions, that the problem
$$
\left\{
\begin{array}{rclcl}
F(D^2u,Du,u,x) &=& 0& \mbox{in} &   \Omega \\
\mathcal{B}(Du,u,x)&=& 0 &\mbox{on}& \partial \Omega,
\end{array}
\right.
$$
has a unique viscosity solution, where the oblique condition in $\mathcal{B}$ is expressed in the condition
$$
D_{\overrightarrow{p}}\mathcal{B}(\overrightarrow{p},r,x)\cdot \overrightarrow{\textbf{n}}>0 \quad \text{ for} \quad (\overrightarrow{p},r,x)\in \mathbb{R}^{n}\times \mathbb{R} \times \partial \Omega.
$$
In addition to this existence/uniqueness result, a Comparison Principle was also established for solutions to such class of problems (see \cite[Theorems 2.1, 2.3 and 2.4]{Ishii91} for specific results).

Afterward, in 1995, Safonov, in \cite{Saf95}, established Schauder estimates for the following fully nonlinear Bellman problem with oblique boundary conditions
$$
\left\{
\begin{array}{rclcl}
	\displaystyle\sup_{ \iota  \in \mathcal{A}} \left\{\text{Tr}(A^{\iota}(x)D^{2}u)+\overrightarrow{b}^{\iota}(x)\cdot Du+ c^{\iota}(x)u-f^{\iota}(x)\right\} &=& 0& \mbox{in} & \Omega \\
	\beta(x)\cdot Du(x)+\gamma(x)u(x)&=& g(x) &\mbox{on}& \partial \Omega.
\end{array}
\right.
$$
Moreover, \textit{a priori} estimates were also addressed. It is noteworthy that Safonov's technique involved combining interior and boundary estimates to achieve the desired classical results.

Almost three decades after  Caffarelli's pioneering work \cite{Caff1}, in 2006, Milakis and Silvestre, in \cite{MilSil06}, developed up to the boundary $C^{0, \alpha}$, $C^{1,\alpha}$ and $C^{2,\alpha}$ estimates for solutions for fully nonlinear elliptic equations with constant coefficients under Neumann boundary conditions as follows
$$
\left\{
\begin{array}{rclcl}
	F(D^2u) &=& f(x)& \mbox{in} & \Omega \\
	\frac{\partial u}{\partial \overrightarrow{\textbf{n}}}(x)&=& g(x) &\mbox{on}& \partial \Omega.
\end{array}
\right.
$$
In this context, an extension of the A.B.P. estimate obtained for the fundamental class $\overline{\mathcal{S}}$ (see Definition \ref{FundClass}, or Chapter 3 of the Caffarelli-Cabr\'{e}'s book \cite{CC}) with Neumann boundary condition was also obtained. In such a scenario, the authors' strategy is to treat the Neumann condition as an integral part of the PDE under consideration. With this insight, their results must be understood as the counterpart to the corresponding interior regularity estimates addressed by Caffarelli's seminal work \cite{Caff1}.

More than one decade after the last developments on fully nonlinear elliptic models with Neumann boundary conditions, Li and Zhang, in 2018, in \cite{LiZhang}, addressed, along the same lines as Milakis and Silvestre, $C^{0, \alpha}$, $C^{1,\alpha} $ and $C^{2,\alpha}$  estimates in a broader context of equations with constant coefficients and non-homogeneous source terms under oblique boundary conditions
$$
\left\{
\begin{array}{rclcl}
	F(D^2u) &=& f(x)& \mbox{in} & \Omega \\
	\beta(x) \cdot Du(x) + \gamma(x) u(x)&=& g(x) &\mbox{on}& \partial \Omega.
\end{array}
\right.
$$
Furthermore, the A.B.P. estimate obtained by Milakis and Silvestre in \cite[Proposition 3.1]{MilSil06} was generalized for problems with a tangential oblique derivative.

Finally, in 2023, Bessa \textit{et al.}, in the work \cite{Bessa}, obtained, within the scope of the regularity theory for $L^{p}$-viscosity solutions, $W^{2,p}$ estimates for elliptic models of the form
$$
\left\{
\begin{array}{rclcl}
	F(D^2u,Du,u,x) &=& f(x)& \mbox{in} & \Omega \\
	\beta(x) \cdot Du(x) + \gamma(x) u(x)&=& g(x) &\mbox{on}& \partial \Omega,
\end{array}
\right.
$$
on a weaker convexity assumption with respect to the governing operator (cf. \cite{BH20} for similar results). In addition, $p-\textrm{BMO}$ type estimates for the Hessian of solutions to such a class of problems when $\gamma\equiv 0$ are also carried out. Last but not least, a study on $W^{2, p}$ estimates for the corresponding obstacle problem with oblique boundary conditions was addressed (cf. \cite{BHO22} for related results).

Despite the extensive literature on fully nonlinear models with Dirichlet and Neumann boundary conditions, quantitative/qualitative properties of solutions for models with general boundary conditions like \eqref{1} are far less investigated (cf. \cite{Bessa}, \cite{BH20} and \cite{LiZhang} as examples of such considerations). This has been our main impetus for the research presented in the current manuscript.

According to our knowledge, up to date, there has been no investigation into the moduli of continuity for viscosity solutions to fully nonlinear elliptic equations with oblique boundary data like \eqref{1}. Therefore, in this manuscript, we will focus on studying both the optimal lower and higher regularity estimates for such solutions. Furthermore, in some particular configurations of the nonlinearity, we can obtain an explicit and universal regularity exponent for the gradient of solutions.

In conclusion, we believe that our results can be useful to the study of a class of homogenization problems. More precisely, recently Choi and Kim in \cite{CK23} considered the family of bounded solutions $(u_{\varepsilon})_{\varepsilon>0}$ for the following problem:
\begin{equation*}\label{Equation Pe} \tag{$\pe$}
	\left\{
	\begin{array}{rclcl}
		F\left(D^2u_{\varepsilon}, \dfrac{x}{\varepsilon}\right) &=& 0 & \mbox{in} & \Pi,\\
		\partial_{\nu} u_{\varepsilon}(x)&=& \mathrm{G}\left(Du_{\varepsilon},\dfrac{x}{\varepsilon}\right) & \mbox{on} & \mathrm{H}_{-1}\\
		u_{\varepsilon}(x)&=&h(x) & \mbox{on} & \mathrm{H}_{0},
	\end{array}
	\right.
\end{equation*}
where
$$
\left\{
\begin{array}{rcl}
  \Pi & \defeq & \{x\in \mathbb{R}^{n};\,-1<(x-\tau)\cdot \nu<0\} \\
  \mathrm{H}_{-1} & \defeq & \{x\in \mathbb{R}^{n};\,(x-\tau)\cdot \nu=-1\} \\
  \mathrm{H}_{0} & \defeq & \{x\in \mathbb{R}^{n};\,(x-\tau)\cdot \nu=0\}
\end{array}
\right.
$$
for each $\tau\in\mathbb{R}^{n}$ and $\nu\in\mathbb{S}^{n-1}$, where $F$ is a uniformly elliptic and second-order operator, and $\mathrm{G}$ and $h$ are suitable given functions. Under certain conditions stated in \cite[Theorem 1.1]{CK23}, it was proved that the family $(u_{\varepsilon})_{\varepsilon>0}$ converges uniformly to the unique solution $\overline{u}$ of the following oblique boundary problem:
\begin{equation}\label{limitprofile}
	\left\{
	\begin{array}{rclcl}
		\overline{\mathbf{F}}(D^2\overline{u}) &=& 0 & \mbox{in} & \Pi,\\
		\partial_{\nu} \overline{u}(x)&=& g(\nu,\mathrm{D}_{\mathrm{T}}\overline{u}) & \mbox{on} & \mathrm{H}_{-1}\\
		\overline{u}(x)&=&h(x) & \mbox{on} & \mathrm{H}_{0}.
	\end{array}
	\right.
\end{equation}
where $\mathrm{D}_{\mathrm{T}}\overline{u}$ denotes the tangential derivative of $\overline{u}$ along the direction $\nu^{\perp}$, and $g=g(\overrightarrow{p}, \overrightarrow{q})$ is Lipschitz continuous in the variable $\overrightarrow{q}$, and if $\overline{\mathbf{F}}$ is rotation-invariant, then $g$ is $\alpha$-H\"{o}lder continuous over irrational directions $\nu$ for the exponent $\alpha=\frac{1}{5n}$.

Finally, since the associated oblique boundary problem in the limiting problem \eqref{limitprofile} enjoys $C^{1,\alpha_0}$ a priori estimates, we believe that, with certain adjustments, we can apply our strategy to obtain improved regularity for the limiting profile $\overline{u}$. We intend to revisit this topic in a forthcoming work.

\section{Assumptions and auxiliary results}\label{Sec02}

\hspace{0.5 cm} In this section, we introduce the structural conditions on which the results of this paper will rely, as well as useful notation and we collect some well-known facts. Throughout this manuscript we will be assuming the following structural conditions:

\begin{enumerate}
\item [\bf(A1)]\label{HypA1} ({\bf Structural conditions}) We assume that $F \in C^0(\text{Sym}(n), \Omega)$. Moreover, there are constants $0 < \lambda \le \Lambda$ such that
\begin{equation*}
\mathscr{P}^{-}_{\lambda,\Lambda}(\mathrm{M}-\mathrm{N})  \le F(\mathrm{M},x)-F(\mathrm{N},x) \le \mathscr{P}^{+}_{\lambda, \Lambda}(\mathrm{M}-\mathrm{N})
\end{equation*}
for any $x \in \Omega$ and $\mathrm{M}, \mathrm{N} \in \textrm{Sym}(n)$.
where
\begin{equation*}
\mathscr{P}^{+}_{\lambda,\Lambda}(\mathrm{X}) \defeq  \Lambda \sum_{e_i >0} e_i +\lambda \sum_{e_i <0} e_i \quad \text{and} \quad \mathscr{P}^{-}_{\lambda,\Lambda}(\mathrm{X}) \defeq \Lambda \sum_{e_i <0} e_i + \lambda \sum_{e_i >  0} e_i,
\end{equation*}
are the \textit{Pucci's extremal operators} and $e_i = e_i(\mathrm{X})$ ($1\leq i\leq n$) denote the eigenvalues of $\mathrm{X}$. For normalization reasons, we shall always assume: $ F(\mathcal{O}_{n\times n}, x) = 0 \quad \text{for all} \,\,\, x \in \Omega,$ which is not restrictive, because one can reduce the problem in order to check it.

\item[\bf(A2)] ({\bf Regularity of the data}) \, The data satisfy $f \in C^0(\Omega)\cap L^{p}(\Omega)$ for $ \frac{n}{2}\leq p<\infty$, $g, \gamma \in C^0(\partial \Omega)$ with $\gamma \le 0$ and $\beta \in C^0( \partial \Omega; \mathbb{R}^n)$ with $\|\beta\|_{L^{\infty}(\partial \Omega )} \le 1$ and there exists a positive constant $\mu_0$ such that $\beta\cdot \overrightarrow{\textbf{n}}\ge \mu_0$, where $\overrightarrow{\textbf{n}}$ is the outward normal vector of $\Omega$.
\end{enumerate}

\begin{remark}
From now on, we observe that an operator fulfilling $\text{\bf (A1)}$ will be referred to as a {\bf $(\lambda, \Lambda)$-elliptic operator}.
\end{remark}

Now, we recall the definition of viscosity solutions of \eqref{1}.

 \begin{definition}[{\bf $C^2$-viscosity solution}]

Let $F$ be continuous in all variables, and we assume $f \in C^0(\Omega \cup \Gamma)$, where $\Gamma \subset \partial \Omega$ (is a relatively open set). A function $u \in C^0(\Omega \cup \Gamma)$ is said to be a $C^{2}$-viscosity solution of \eqref{1} if the following conditions hold:
\begin{enumerate}
\item[a)] for all $\phi \in C^2(\Omega \cup \Gamma)$ touching $u$ by above at $x_0 \in \Omega \cup \Gamma$, then
$$
	F(D^2 \phi(x_0), x_0) \ge f(x_0)
$$
when $x_0 \in \Omega$ and
$$
\displaystyle \beta(x_0) \cdot D \phi(x_0) + \gamma(x_0) u(x_0) \ge g(x_0)
$$
 when $x_0 \in \Gamma$.
\item[b)] for all $\phi \in C^2(\Omega \cup \Gamma)$ touching $u$ by below at $x_0 \in \Omega \cup \Gamma$, then
$$
	F(D^2 \phi(x_0), x_0) \le f(x_0)
$$
when $x_0 \in \Omega$ and
$$
\displaystyle \beta(x_0) \cdot D \phi(x_0) + \gamma(x_0) u(x_0) \le g(x_0)
$$
 when $x_0 \in \Gamma$.
\end{enumerate}

 \end{definition}

Throughout this work, we assume that $\vec{0} \in \partial \Omega$, and we denote
 $$
 \Omega^{+}_R = \Omega \cap \mathrm{B}_R \quad \text{and} \quad \Omega^{0}_R = \partial \Omega \cap \mathrm{B}_R
 $$
  where
 $\mathrm{B}_R = \mathrm{B}_R(0) \subset \mathbb{R}^n$ is the ball centred at $\vec{0}$ with radius $R>0$. We also denote
 $$
 	\mathrm{B}^+_1 \defeq \{x=(x^{\prime},x_n) \in \mathbb{R}^n \, : \, \|x\| <1 \,\,\, \textrm{and} \,\,\, x_n >0\} \quad \textrm{and} \quad \mathrm{T}_1 \defeq \{x=(x^{\prime},0) \in \mathbb{R}^n \, : \, |x| <1\}.
 $$
Similarly, we can define $\mathrm{B}^+_r$ and $\mathrm{T}_r$ for a radius $r>0$.

Now, we define the following function, which measures the oscillation of the coefficients of $F$ around $x_0$:

\begin{eqnarray*}
\Phi_{F}(x,x_{0}) \defeq \sup_{\mathrm{M}\in \mbox{Sym}(n)\setminus\{0\}}\frac{|F(\mathrm{M},x)-F(\mathrm{M},x_{0})|}{\|\mathrm{M}\|}, \ x\in \mathrm{B}^{+}_{1}
\end{eqnarray*}

Moreover, when $x_{0}=\vec{0}$, we will use the notation $\Phi_{F}(x)=\Phi_{F}(x,0)$ for simplicity.

Next, we present the following stability result (see for instance \cite[Theorem 3.8]{CCKS} for a proof).

\begin{lemma}[{\bf Stability Lemma}]\label{Stability}
For $k \in \mathbb{N}$ let $\Omega_k \subset \Omega_{k+1}$ be an increasing sequence of domains and $\displaystyle \Omega \defeq \bigcup_{k=1}^{\infty} \Omega_k$. Let $F, F_k$ be $(\lambda, \Lambda)-$elliptic operators. Assume $f \in L^{p}(\Omega)$, $f_k \in L^p(\Omega_k)$ and that $u_k \in C^0(\Omega_k)$ are $C^{2}-$viscosity sub-solutions (resp. super-solutions) of
$$
F_k(D^2 u_k,x)=f_k(x) \quad \textrm{in} \quad \Omega_k.
$$
Suppose that $u_k \to u_{\infty}$ locally uniformly in $\Omega$ and that for $\mathrm{B}_r(x_0) \subset \Omega$ and $\varphi \in C^{2}(\mathrm{B}_r(x_0))$ we have
\begin{equation*}
\|(\hat{g}-\hat{g}_k)^+\|_{L^p(\mathrm{B}_r(x_0))} \to 0 \quad \left(\textrm{resp.} \,\,\, \|(\hat{g}-\hat{g}_k)^-\|_{L^p(\mathrm{B}_r(x_0))} \to 0 \right),
\end{equation*}
where $\hat{g}(x) \defeq F(D^2 \varphi,x)-f(x)$ and $\hat{g}_k(x) =  F_k(D^2 \varphi,x)-f_k(x)$.  Then $u$ is an $C^{2}-$viscosity sub-solution (resp. super-solution) of
$$
F(D^2 u,x)=f(x) \quad \textrm{in} \quad \Omega.
$$
\end{lemma}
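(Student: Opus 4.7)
The plan is to proceed by contradiction in the classical viscosity-stability style, but with care in passing from pointwise touching inequalities to the $L^{p}$-integral convergence hypothesis. I treat the sub-solution case (the super-solution case being symmetric). Assume that the limit function $u_\infty$ fails to be a $C^2$-viscosity sub-solution of $F(D^2 u, x) = f(x)$. Then there exist $x_0 \in \Omega$ and $\phi \in C^2$ defined in a neighborhood of $x_0$, with $u_\infty - \phi$ achieving a local maximum at $x_0$, such that $F(D^2\phi(x_0), x_0) - f(x_0) < 0$. I would first arrange that the touching is strict by replacing $\phi$ with $\phi_\delta(x) \defeq \phi(x) + \delta |x - x_0|^2$ for small $\delta>0$ (absorbing the $\delta$-term into the hypothesis via continuity of $F$ in the matrix variable).

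Next, since $u_k \to u_\infty$ locally uniformly in $\Omega$, the fact that $\Omega = \bigcup_k \Omega_k$ is increasing guarantees that $x_0 \in \Omega_k$ for all large $k$. The strict-touching property then allows me to locate maximizers $x_k$ of $u_k - \phi_\delta$ in a small closed ball $\overline{B_\rho(x_0)} \subset \Omega_k$ satisfying $x_k \to x_0$. The $C^2$-viscosity sub-solution property of each $u_k$ yields the pointwise inequality
\begin{equation*}
F_k(D^2 \phi_\delta(x_k), x_k) \ge f_k(x_k),
\end{equation*}
which, written in terms of the auxiliary functions $\hat g_k$, $\hat g$ introduced in the statement (applied to the $C^2$ test function $\phi_\delta$), is equivalent to $\hat g_k(x_k) \ge 0$.

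The main obstacle, and the place where the integral hypothesis must be invoked rather than ordinary continuity of the coefficients, is to pass from these pointwise sign facts along the sequence $\{x_k\}$ to the limiting inequality $\hat g(x_0) \ge 0$ (which would contradict the strict sign of $\hat g$ at $x_0$ inherited from the continuity of $\phi_\delta$, $F$ and $f$). The idea is to upgrade the pointwise inequality $\hat g_k(x_k) \ge 0$ to an essential-infimum statement on a shrinking ball around $x_k$: since $D^2 \phi_\delta$ is continuous and $x_k$ is a maximum point of $u_k - \phi_\delta$, one can run a small perturbation argument (for instance replacing $\phi_\delta$ by $\phi_\delta + \eta |x - x_k|^2$ and exploiting ellipticity of $F_k$) to conclude that for every $\eta>0$
\begin{equation*}
\operatorname*{ess\,inf}_{B_s(x_k)} \hat g_k \ \ge\ -\eta
\end{equation*}
for $s = s(\eta)$ small. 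Combining the hypothesis $\|(\hat g - \hat g_k)^+\|_{L^{p}(B_r(x_0))} \to 0$ with this essentially-nonnegative information for $\hat g_k$ on small balls around $x_k \to x_0$ forces $\hat g$ to be essentially nonnegative on a shrinking family of balls around $x_0$, hence $\operatorname*{ess\,liminf}_{x \to x_0} \hat g(x) \ge 0$. By continuity of $\phi_\delta$, $F(\cdot, \cdot)$ and $f$ at $x_0$ this reduces to $\hat g(x_0) \ge 0$, and after sending $\delta \downarrow 0$ we recover $F(D^2\phi(x_0), x_0) \ge f(x_0)$, the desired contradiction.

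The only technical point to spell out carefully is the localization step producing the essential-infimum bound for $\hat g_k$ near $x_k$; this is exactly the content of the standard $C^2$-to-$L^{p}$ viscosity reformulation and is where the ellipticity constants $(\lambda,\Lambda)$ (uniform in $k$) are used, together with the Pucci bounds in assumption (A1). Once this is in place, the $L^{p}$ convergence of $(\hat g - \hat g_k)^+$ plays the role of a one-sided weak-* convergence sufficient to push the nonnegativity across the limit.
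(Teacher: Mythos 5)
Your overall strategy (negate the subsolution property, perturb to a strict maximum, extract touching points $x_k\to x_0$ of $u_k-\phi_\delta$, and then try to pass the information to the limit through the $L^p$ hypothesis) is the classical one, but the bridge you build between the pointwise touching inequality and the $L^p$-convergence hypothesis is exactly where the argument breaks, and it is worth noting that the paper itself does not prove this lemma at all — it simply invokes \cite[Theorem 3.8]{CCKS}. The step you flag as ``the only technical point'', namely upgrading $\hat g_k(x_k)\ge 0$ to $\operatorname{ess\,inf}_{B_s(x_k)}\hat g_k\ge -\eta$ with $s=s(\eta)$, is not a consequence of the $C^2$-viscosity definition plus ellipticity: the definition only yields information \emph{at} the touching point, and turning that into an essential bound on a ball requires a modulus of continuity for $\hat g_k=F_k(D^2\phi_\delta,\cdot)-f_k$. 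No such modulus is available uniformly in $k$ (the $f_k$ are only bounded in $L^p$, and nothing controls the $x$-dependence of $F_k$ uniformly), so any radius $s_k(\eta)$ you can produce degenerates with $k$; and once $|B_{s_k}(x_k)|$ is allowed to shrink at an uncontrolled rate, the $L^p$-smallness of $(\hat g-\hat g_k)^{\pm}$ on $B_r(x_0)$ gives no pointwise or positive-measure information inside those balls (Chebyshev only bounds the bad set by $\eta^{-p}\|\cdot\|_{L^p}^p$, which need not be smaller than $|B_{s_k}(x_k)|$). This quantitative, measure-theoretic difficulty is precisely why the proof in \cite{CCKS} does not localize pointwise at all: there one negates the conclusion in the form $\hat g\le-\varepsilon$ a.e.\ on a small ball, observes that $w_k=u_k-\varphi$ (plus the paraboloid) lies in the Pucci class $\overline{\mathcal S}$ with right-hand side built from $\hat g_k$, and applies the A.B.P.\ maximum principle on that ball: the strict-maximum geometry and the locally uniform convergence $u_k\to u_\infty$ force $\|\hat g_k^{+}\|_{L^p}$ (in the appropriate signed version) to stay bounded below, contradicting the convergence hypothesis. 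The A.B.P.\ estimate is the indispensable substitute for your invalid pointwise-to-a.e.\ upgrade.

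There is also a direction-of-inequality problem in your final step. From $\hat g_k\ge-\eta$ a.e.\ near $x_k$ and $\|(\hat g-\hat g_k)^{+}\|_{L^p}\to 0$ you cannot conclude that $\hat g$ is essentially nonnegative near $x_0$: one has $\hat g\ \ge\ \hat g_k-(\hat g-\hat g_k)^{-}$, so a lower bound transfers only if the \emph{negative} part $(\hat g-\hat g_k)^{-}$ is controlled, while the stated hypothesis controls the positive part (indeed, $\hat g\le-\varepsilon$ with $\hat g_k\ge 0$ makes $(\hat g-\hat g_k)^{+}=0$, perfectly compatible with the hypothesis). Part of this tension comes from the paper transcribing the $(\cdot)^{+}/(\cdot)^{-}$ roles from \cite{CCKS}, whose ellipticity/subsolution convention is opposite to the one used here (there a subsolution satisfies $\hat g\le 0$ at touching points, and then the $(\hat g-\hat g_k)^{+}$ control is the right one); under the present paper's convention the roles of the signed parts must be matched accordingly. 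But even with the signs sorted out, the localization gap described above remains, so the proposal as written does not yield the lemma.
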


We will also need the following terminology from the fundamental class of solutions (for more details, see \cite{CC}).

\begin{definition}\label{FundClass}
	We define the class $\overline{\mathcal{S}}\left(\lambda,\Lambda, f\right)$ and $\underline{\mathcal{S}}\left(\lambda,\Lambda, f\right)$ to be the set of all continuous functions $u: \Omega \to \mathbb{R}$ satisfying
$$
 \displaystyle \mathscr{P}^{+}_{\lambda,\Lambda}(D^{2}u) \ge f(x) \quad \text{in} \quad \Omega \quad (\text{resp.}\quad  \mathscr{P}^{-}_{\lambda,\Lambda}(D^{2}u) \le f(x))
$$
in the viscosity sense. Thus, we define
	$$
	\mathcal{S}\left(\lambda, \Lambda, f\right) \defeq  \overline{\mathcal{S}}\left(\lambda, \Lambda,f\right) \cap \underline{\mathcal{S}}\left(\lambda, \Lambda, f\right)\,\,\text{and}\,\,
	\mathcal{S}^{\star}\left(\lambda, \Lambda, f\right) \defeq  \overline{\mathcal{S}}\left(\lambda, \Lambda,|f|\right) \cap \underline{\mathcal{S}}\left(\lambda, \Lambda, -|f|\right).
	$$
\end{definition}

We also present a Maximum Principle that ensures universal boundedness (see \cite{BH20} for more details).

\begin{lemma}[ {\bf A.B.P. Maximum Principle}]\label{ABP} Let $\Omega\subset \mathrm{B}_{1}$ and $u\in C^{0}(\overline{\Omega})$ be satisfying
\begin{equation*}
\left\{
\begin{array}{rclcl}
u\in \mathcal{S}(\lambda,\Lambda,f) &\mbox{in}& \Omega \\
\beta\cdot Du+\gamma u=g &\mbox{on}& \Gamma.
\end{array}
\right.
\end{equation*}
Suppose there is $\varsigma\in \partial \mathrm{B}_{1}$ such that $\beta\cdot\varsigma\geq \mu_0$ and $\gamma\le 0$ on $\Gamma$. Then, there exists  $\varepsilon_{0}=\varepsilon_{0}(n,\lambda,\Lambda,\mu_0)\in \left(0,\frac{n}{2}\right)$ such that
\begin{eqnarray*}
\|u\|_{L^{\infty}(\Omega)}\leq \|u\|_{L^{\infty}(\partial \Omega\setminus \Gamma)}+\mathrm{C} (n, \lambda, \Lambda,b, \mu_0)(\| g\|_{L^{\infty}(\Gamma)}+\| f\|_{L^{n-\varepsilon_{0 }}(\Omega)}).
\end{eqnarray*}
\end{lemma}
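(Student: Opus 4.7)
The plan is to reduce the oblique-boundary ABP estimate to the classical Escauriaza interior ABP by extending both the domain and the solution across the oblique part $\Gamma$ of the boundary along the admissible direction $\varsigma$. Since $\beta\cdot\varsigma\ge\mu_{0}>0$ on $\Gamma$, the field $\varsigma$ is uniformly transversal to $\Gamma$, so for sufficiently small $h>0$ the map $(x,t)\mapsto x+t\varsigma$ is a bi-Lipschitz parametrization of a one-sided collar neighborhood, producing an enlarged domain
$$
\Omega^{\star}\;:=\;\Omega\cup\{x+t\varsigma \,:\, x\in\Gamma,\ 0<t<h\},
$$
together with an affine-in-$t$ extension $\tilde u$ whose slope along $\varsigma$ at the base is prescribed to be $(g-\gamma u)/(\beta\cdot\varsigma)$ on $\Gamma$, so that the oblique condition is automatically encoded on the extended function.

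The crucial step, which constitutes the main technical obstacle, is to show that $\tilde u$ belongs to the Pucci class $\mathcal{S}(\lambda,\Lambda,\tilde f)$ on $\Omega^{\star}$ in the viscosity sense, with $\tilde f$ equal to $f$ inside $\Omega$ and controlled on the collar. This requires a touching argument at points of $\Gamma$: given any $C^{2}$ paraboloid $\varphi$ touching $\tilde u$ from above (resp.\ below) at $x_{0}\in\Gamma$, one exploits the compatibility conditions $\beta\cdot\varsigma\ge\mu_{0}$ and $\gamma\le 0$, together with the structure of the extension, to transfer the sub/supersolution property from the interior side of $\Gamma$ to $x_{0}$ itself. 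This follows the spirit of the extension lemmas developed in Milakis-Silvestre \cite{MilSil06} and Li-Zhang \cite{LiZhang}, adapted to the Escauriaza $L^{n-\varepsilon_{0}}$ scale as in \cite{BH20}; some care is needed so that the collar extension does not introduce uncontrolled second-order contributions from the tangential variation of $u$ along $\Gamma$, which is handled by choosing the slope $K(x)$ so that $\tilde u$ reproduces a tangentially affine profile on $\Gamma$ up to a harmless quadratic defect.

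Once the extended function is in $\mathcal{S}(\lambda,\Lambda,\tilde f)$, I would apply Escauriaza's interior ABP estimate on $\Omega^{\star}$ to obtain
$$
\|\tilde u\|_{L^{\infty}(\Omega^{\star})}\;\le\;\|\tilde u\|_{L^{\infty}(\partial\Omega^{\star})}+C(n,\lambda,\Lambda)\,\|\tilde f\|_{L^{n-\varepsilon_{0}}(\Omega^{\star})}.
$$
Now $\partial\Omega^{\star}$ splits into the original Dirichlet-type piece $\partial\Omega\setminus\Gamma$, which is directly controlled by the first term of the desired bound, and the newly created outer face at height $h$, on which the affine extension is controlled by
$$
\|u\|_{L^{\infty}(\Gamma)}+\tfrac{h}{\mu_{0}}\bigl(\|g\|_{L^{\infty}(\Gamma)}+\|\gamma\|_{L^{\infty}(\Gamma)}\,\|u\|_{L^{\infty}(\Omega)}\bigr).
$$
Choosing $h$ small (depending only on $\mu_{0}$ and on the fixed $L^{\infty}$ bound for $\gamma$) makes the coefficient of $\|u\|_{L^{\infty}(\Omega)}$ on the right strictly less than $1$; absorbing this term and collecting constants produces the stated inequality, with the Escauriaza exponent $\varepsilon_{0}=\varepsilon_{0}(n,\lambda,\Lambda,\mu_{0})$ inherited directly from the interior estimate.
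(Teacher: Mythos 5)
There is a genuine gap — in fact the paper does not prove this lemma at all (it imports it from \cite{BH20}, going back to \cite{MilSil06} and \cite{LiZhang}), so your proposal must stand on its own, and its central step does not. The claim that the collar extension $\tilde u(x+t\varsigma)=u(x)+tK(x)$, with $K=(g-\gamma u)/(\beta\cdot\varsigma)$ on $\Gamma$, lies in $\mathcal{S}(\lambda,\Lambda,\tilde f)$ on $\Omega^{\star}$ with a controlled $\tilde f$ is unjustified and generally false: $u|_{\Gamma}$, $g$, $\beta$, $\gamma$ are merely continuous, so $\tilde u$ is affine only along the $\varsigma$-lines and has no one-sided second-order control in the transversal directions. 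If $u|_{\Gamma}$ has, say, a concave Lipschitz kink, then at interior collar points $\tilde u$ is touched from above by paraboloids of arbitrarily negative $\mathscr{P}^{+}_{\lambda,\Lambda}$, so no locally bounded $\tilde f$ can make the subsolution inequality hold; this is precisely why the actual proofs in \cite{MilSil06}, \cite{LiZhang}, \cite{BH20} never extend $u$ pointwise but instead run the ABP machinery on the convex envelope/contact set, where the oblique condition is used only to constrain the slopes of supporting planes at contact points on $\Gamma$ (and handling the tangential component of $\beta$ there is exactly the difficulty Li--Zhang had to resolve). Relatedly, your prescribed slope ignores the tangential part $\beta_{T}\cdot D_{T}u$ of the boundary operator — which does not even exist pointwise for a viscosity solution — so except in the pure Neumann case the extension does not actually encode the condition $\beta\cdot Du+\gamma u=g$.

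Even granting the Pucci-class claim, the final bookkeeping is circular. After applying Escauriaza's ABP on $\Omega^{\star}$, the new outer face carries the values $u|_{\Gamma}+hK$, so your estimate reads
\begin{equation*}
\|u\|_{L^{\infty}(\Omega)}\le \|u\|_{L^{\infty}(\partial\Omega\setminus\Gamma)}+\|u\|_{L^{\infty}(\Gamma)}+\tfrac{h}{\mu_{0}}\left(\|g\|_{L^{\infty}(\Gamma)}+\|\gamma\|_{L^{\infty}(\Gamma)}\|u\|_{L^{\infty}(\Omega)}\right)+C\|\tilde f\|_{L^{n-\varepsilon_{0}}(\Omega^{\star})},
\end{equation*}
and only the term $\tfrac{h}{\mu_{0}}\|\gamma\|_{L^{\infty}}\|u\|_{L^{\infty}(\Omega)}$ is absorbable: the term $\|u\|_{L^{\infty}(\Gamma)}$ enters with coefficient one, can equal $\|u\|_{L^{\infty}(\Omega)}$ (the maximum sitting on $\Gamma$ is exactly the case the lemma must handle), and cannot be absorbed or bounded by $\|u\|_{L^{\infty}(\partial\Omega\setminus\Gamma)}$. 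Note also that with $\gamma\le 0$ the extension \emph{increases} along $\varsigma$ at a positive boundary maximum (when $g\ge 0$), so there is no sign mechanism pushing the maximum back to the controlled part of the boundary. To repair the argument you would need to abandon the pointwise extension and argue at contact points of the convex envelope (or with explicit barriers), where the viscosity boundary inequality applied to affine-plus-small-paraboloid test functions yields the $\|g\|_{L^{\infty}(\Gamma)}$ contribution to the gradient-image measure estimate, as in \cite{MilSil06} and \cite{LiZhang}.
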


Next, we will remember the definition of some functionals spaces.

\begin{definition}
Let $\alpha\in (0,1]$. We say that a function $u\in C^{0}(\overline{\Omega})$ is $\alpha$-H\"{o}lder continuous in $\overline{\Omega}$, if
\begin{eqnarray*}
[u]_{0,\alpha,\overline{\Omega}}=\sup_{x, y \in \overline{\Omega} \atop{x \neq y}}\frac{|u(x)-u(y)|}{|x-y|^{\alpha}}<\infty.
\end{eqnarray*}
The set of functions $\alpha$-H\"{o}lder continuous in $\overline{\Omega}$ is denoted by $C^{0,\alpha}(\overline{\Omega})$. Furthermore, $C^{0,\alpha}(\overline{\Omega})$ is a Banach space equipped with the following norm
\begin{eqnarray*}
\|u\|_{C^{0,\alpha}(\overline{\Omega})}=:\|u\|_{L^{\infty}(\overline{\Omega})}+[u]_{0,\alpha,\overline{\Omega}}.
\end{eqnarray*}
\end{definition}

We can also define the concept of H\"{o}lder continuity in the $L^{p}$-sense.

\begin{definition}
Let $\alpha\in(0,1)$ and $p \in [1, \infty)$. We say that $u:\mathrm{B}^{+}_{1}\cup\mathrm{T}_{1}\longrightarrow\mathbb{R}$ is $\alpha$-H\"{o}lder continuous at the origin in $L^{p}$-sense if,
\begin{eqnarray*}
[u]_{C^{0,\alpha}_{p}(0)} \defeq \sup_{0<r<1} \frac{1}{r^{\alpha}}\left(\intav{\mathrm{B}^{+}_{r}}|u(x)-(u)_{r}|^{p}dx\right)^{\frac{1}{p}}<\infty,
\end{eqnarray*}
where
\begin{eqnarray*}
(u)_{r} \defeq \intav{\mathrm{B}^{+}_{r}}u(x)dx
\end{eqnarray*}
\end{definition}

More generally, we can also define higher order H\"{o}lder spaces.

\begin{definition}
Let $\alpha\in (0,1]$ and $k$ an positive integer. We define the H\"{o}lder space $C^{k,\alpha}(\overline{\Omega})$ of the functions $C^{k}(\overline{\Omega})$ such that its partial derivatives $D^{\kappa}u$ belongs to $C^{0,\alpha}(\overline{\Omega})$ for any multi-index of order $|\kappa|=k$. In this case, we can equip $C^{k,\alpha}(\overline{\Omega})$ with the following norm:
\begin{eqnarray*}
\|u\|_{C^{k,\alpha}(\overline{\Omega})}=:\|u\|_{C^{k}(\overline{\Omega})}+[D^{k}u]_{\alpha,\overline{\Omega}},
\end{eqnarray*}
where
\begin{eqnarray*}
[D^{k}u]_{\alpha,\overline{\Omega}}=:\sum_{|\kappa|=k}[D^{\kappa} u]_{0,\alpha,\overline{\Omega}}
\end{eqnarray*}
which makes it a Banach space.
\end{definition}

\begin{remark}
In particular, we can define in $C^{2,\alpha}(\overline{\mathrm{B}^{+}_{r}})$, the ``adimensional norm''
\begin{eqnarray*}
\|u\|^{*}_{C^{2,\alpha}(\overline{B^+_{r}})}= \|u\|_{L^{\infty}(\overline{\mathrm{B}^{+}_{r}})}+r\|Du\|_{L^{\infty}(\overline{\mathrm{B}^{+}_{r}})}+r^{2}\|D^{2}u\|_{L^{\infty}(\overline{\mathrm{B}^{+}_{r}})}+r^{2+\alpha}\sup_{x, y \in \overline{\mathrm{B}^{+}_{r}} \atop{x \neq y}}\frac{\|D^{2}u(x)-D^{2}u(y)\|}{|x-y|^{\alpha}},
\end{eqnarray*}
which will play an essential role in proving the Schauder-type estimates in Section \ref{SecSchauder}.
\end{remark}

Now, we define also the Morrey spaces (see \cite{DKM14}), which we will revisit them in the next sections.

\begin{definition}[{\bf Morrey spaces}]
Let $\mathrm{E} \subset \mathbb{R}^{n}$ be a bounded open set, and let $1\leq p<\infty$ and $0\leq\theta\leq n$. By $L^{p,\theta}(\mathrm{E})$, we denote the \textit{Morrey space} of functions $h\in L^{p}_{\text{loc}}(\mathrm{E})$ such that
\begin{eqnarray*}
\|h\|_{L^{p,\theta}(\mathrm{E})}=\sup_{x_{0}\in \mathrm{E} \atop \ 0<r\leq \diam(\mathrm{E})}\left(r^{\theta -n}\int_{\mathrm{E}(x_{0},r)}|h(y)|^{p}dy\right)^{\frac{1}{p}}<\infty,
\end{eqnarray*}
where $\mathrm{E}(x_{0},r)=\mathrm{E}\cap\mathrm{B}(x_{0},r)$.
\end{definition}

It is not difficult to verify that $L^{p,\theta}(\mathrm{E})\subset L^{p}(\mathrm{E})$. Furthermore, we have the following inequality
\begin{eqnarray*}
\|h\|_{L^{p}(\mathrm{E})}\leq \left(\diam(\mathrm{E})\right)^{\frac{n-\theta}{p}}\|h\|_{L^{p,\theta}(\mathrm{E})}, \quad \forall h\in L^{p,\theta}(\mathrm{E}).
\end{eqnarray*}

We also need the definition of \textbf{Bounded Mean Oscillation} functions, which will be useful in Section \ref{Section06}. Specifically,

\begin{definition}\label{DefBMO}
We recall that a function $f\in L^{1}_{\text{loc}}(\Omega)$ is said to be \textbf{$p$-bounded mean oscillation} in $\Omega$ for $p\in[1,\infty)$, i.e., $f\in p-\textrm{BMO}(\Omega)$ if
\begin{equation*}\label{p-BMOnorm}
\|f\|_{p-\textrm{BMO}(\Omega)} \defeq \sup_{x_{0}\in \Omega, \rho>0} \left(\intav{\mathrm{B}{\rho}(x_0) \cap \Omega} |f(x) - (f)_{x_0, \rho}|^p ,dx\right)^{\frac{1}{p}} <\infty,
\end{equation*}
where, for each $x_{0}\in \Omega$ and $\rho>0$, we have that
$$
(f)_{x_0, \rho} \defeq \intav{\mathrm{B}_{\rho}(x_0) \cap \Omega} f(x) dx
$$
and for simplicity, we use the abbreviated notation $(f)_{\rho}$ when $x_{0}=0$.
\end{definition}

From the $p$-BMO spaces, we can define a subclass of functions that we will need later.
\begin{definition}
We recall that a function $f\in \textrm{BMO}$ is said to be \textbf{vanishing mean oscillation} in $\Omega$, i.e., $f\in\textrm{VMO}(\Omega)$ if
\begin{equation*}
 \lim_{r\to 0^{+}}\sup_{|\mathrm{B}|\leq r} \left(\intav{\mathrm{B} \cap \Omega} |f(x) - (f)_{\mathrm{B}}| dx\right) =0,
\end{equation*}
where the supremum is taken over all balls $\mathrm{B}\subset \mathbb{R}^{n}$ of the measure at most $r$ and for each ball $\mathrm{B}\subset\mathbb{R}^{n}$, and
$$
(f)_{\mathrm{B}} \defeq \intav{\mathrm{B} \cap \Omega} f(x) dx.
$$
\end{definition}

\subsection*{Approximation devices for viscosity solutions}

In this section, we will present a key ingredient in accessing the sharp regularity estimates available for our model PDEs. For this purpose, we need to prove some Approximation Lemmas for viscosity solutions.

\begin{lemma}[{\bf Approximation Lemma I}]\label{approx}

Let $u$ be a viscosity solution of $\eqref{1}$ with $u=\varphi$ on $\partial \mathrm{B}^{+}_{1}\setminus\mathrm{T}_{1}$ for $\varphi\in C^{0}(\partial \mathrm{B}^{+}_{1}\setminus\mathrm{T}_{1})$ such that $\|\varphi\|_{L^{\infty}(\partial \mathrm{B}^{+}_{1}\setminus\mathrm{T}_{1})}\leq \mathrm{C}_{1}$ for a constant  $\mathrm{C}_{1}>0$.  Suppose $f \in L^p(B_1^+)$ and $g\in C^{0,\alpha^{\prime}}(\mathrm{T}_{1})$, for some fixed $\alpha^{\prime} \in (0, 1]$ with $\|g\|_{C^{0,\alpha^{\prime}}(\mathrm{T}_{1})}\leq \mathrm{C}_{2}$ for $\mathrm{C}_{2}>0$ and $p\in[n-\varepsilon_{0},\infty)$. Given $\delta>0$, there exists $\eta>0$ depending only on $\delta$, $n$, $\lambda$, $\Lambda$, $\mathrm{C}_{1}$, $\mathrm{C}_{2}$ and $p$ such that if
\begin{eqnarray*}
\max\left[\left(\intav{\mathrm{B}^{+}_{1}}|\Phi_{F}(x)|^{p}dx\right)^{\frac{1}{p}}, \|f\|_{L^{p}(\mathrm{B}^{+}_{1})}\right]\leq \eta
\end{eqnarray*}
then $\mathfrak{h}$, the viscosity solution of
\begin{eqnarray*}
	\left\{
	\begin{array}{rclcl}
		F(D^{2}\mathfrak{h},0)=0 &\mbox{in}& \mathrm{B}^{+}_{\frac{7}{8}} \\
		\beta(x)\cdot D\mathfrak{h}(x)+\gamma(x) \mathfrak{h}(x)=g(x) &\mbox{on}& \mathrm{T}_{\frac{7}{8}}\\
		\mathfrak{h}(x)= u(x) &\mbox{on}& \partial \mathrm{B}^{+}_{\frac{7}{8}}\setminus \mathrm{T}_{\frac{7}{8}}
	\end{array}
	\right.
\end{eqnarray*}
satisfies
\begin{eqnarray*}
\sup_{\mathrm{B}^{+}_{\frac{7}{8}}}|u-\mathfrak{h}|\leq \delta.
\end{eqnarray*}
\end{lemma}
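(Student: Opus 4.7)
The plan is to argue by contradiction and compactness, which is the canonical route for approximation lemmas of this type. Suppose the statement fails. Then there exist $\delta_{0}>0$ and, for each $k\in\mathbb{N}$, a $(\lambda,\Lambda)$-elliptic operator $F_{k}$ together with data $f_{k}, g_{k}, \beta_{k}, \gamma_{k}, \varphi_{k}$ satisfying all the hypotheses uniformly in $k$, with
\begin{equation*}
\|\Phi_{F_{k}}\|_{L^{p}(\mathrm{B}^{+}_{1})} + \|f_{k}\|_{L^{p}(\mathrm{B}^{+}_{1})} \le \frac{1}{k},
\end{equation*}
a corresponding viscosity solution $u_{k}$ of \eqref{1} with $u_{k}=\varphi_{k}$ on $\partial\mathrm{B}^{+}_{1}\setminus\mathrm{T}_{1}$, and its associated frozen-coefficient solution $\mathfrak{h}_{k}$ in $\mathrm{B}^{+}_{7/8}$ as in the statement, for which $\sup_{\mathrm{B}^{+}_{7/8}}|u_{k}-\mathfrak{h}_{k}|>\delta_{0}$.

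First, I would collect uniform estimates and extract compactness. Lemma \ref{ABP} applied to $u_{k}\in\mathcal{S}^{\star}(\lambda,\Lambda,f_{k})$ yields a uniform $L^{\infty}$ bound depending only on $\mathrm{C}_{1}, \mathrm{C}_{2}, \mu_{0}$ and the controlled $L^{n-\varepsilon_{0}}$-norm of $f_{k}$, and analogously for $\mathfrak{h}_{k}$. Up-to-the-boundary Hölder estimates for the Pucci class with oblique boundary data, available from Milakis--Silvestre \cite{MilSil06} and Li--Zhang \cite{LiZhang}, then furnish a common modulus of continuity on $\overline{\mathrm{B}^{+}_{7/8}}$. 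Arzelà--Ascoli and a diagonal extraction give $u_{k}\to u_{\infty}$ and $\mathfrak{h}_{k}\to\mathfrak{h}_{\infty}$ uniformly on $\overline{\mathrm{B}^{+}_{7/8}}$, with $u_{\infty}=\mathfrak{h}_{\infty}$ on $\partial\mathrm{B}^{+}_{7/8}\setminus\mathrm{T}_{7/8}$. A further subsequence extraction, using the equicontinuity of normalized $(\lambda,\Lambda)$-elliptic operators on compact subsets of $\text{Sym}(n)$, produces a limit operator $F_{\infty}$, again $(\lambda,\Lambda)$-elliptic and normalized, with $F_{k}(\cdot,0)\to F_{\infty}$ locally uniformly. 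Similarly, $(\beta_{k},\gamma_{k},g_{k})\to(\beta_{\infty},\gamma_{\infty},g_{\infty})$ uniformly on $\mathrm{T}_{7/8}$ with the limit satisfying the same structural conditions.

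Next I would pass to the limit in both the equation and the oblique condition. For the interior equation, the crucial observation is that for every fixed test function $\phi$,
\begin{equation*}
\|F_{k}(D^{2}\phi,\cdot)-F_{\infty}(D^{2}\phi,\cdot)\|_{L^{p}} \le \|D^{2}\phi\|_{\infty}\,\|\Phi_{F_{k}}\|_{L^{p}} + \|F_{k}(D^{2}\phi,0)-F_{\infty}(D^{2}\phi,0)\|_{L^{p}}\longrightarrow 0,
\end{equation*}
so Lemma \ref{Stability} (combined with $\|f_{k}\|_{L^{p}}\to 0$) ensures $u_{\infty}$ and $\mathfrak{h}_{\infty}$ are both $C^{2}$-viscosity solutions of $F_{\infty}(D^{2}v,0)=0$ in $\mathrm{B}^{+}_{7/8}$. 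For the oblique condition, I would argue directly at the viscosity level: given $\phi\in C^{2}$ touching $u_{\infty}$ from above at $x_{0}\in\mathrm{T}_{7/8}$, a standard quadratic perturbation of $\phi$ yields nearby strict touching points $x_{k}\to x_{0}$ for the $u_{k}$, and uniform convergence of $(\beta_{k},\gamma_{k},g_{k})$ together with continuity of $D\phi$ transfers the viscosity inequality to the limit; the same applies to $\mathfrak{h}_{\infty}$. Finally, the comparison principle of Ishii \cite{Ishii91} for oblique boundary-value problems forces $u_{\infty}\equiv \mathfrak{h}_{\infty}$ in $\mathrm{B}^{+}_{7/8}$, contradicting the fact that $\sup_{\mathrm{B}^{+}_{7/8}}|u_{\infty}-\mathfrak{h}_{\infty}|\ge \delta_{0}$ by uniform convergence.

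The main obstacle I anticipate is the boundary-condition limit. Unlike the interior equation, which is handled cleanly by Lemma \ref{Stability} via $L^{p}$ control of $F_{k}(D^{2}\phi,\cdot)-F_{\infty}(D^{2}\phi,\cdot)$, the oblique condition involves $Du_{k}$ and therefore cannot be reduced to an $L^{p}$ stability statement; it has to be carried out test-function by test-function in the viscosity sense, which in turn requires uniform equicontinuity of the $u_{k}$ all the way up to $\mathrm{T}_{7/8}$. This latter ingredient depends on an independent boundary Hölder estimate for the Pucci class with oblique data; once it and the uniform convergence of the oblique coefficients are in hand, the contradiction scheme closes routinely.
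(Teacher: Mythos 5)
Your overall scheme (contradiction, uniform $L^\infty$ bounds via Lemma \ref{ABP}, up-to-the-boundary H\"older estimates from \cite{LiZhang} to get equicontinuity, Arzel\`a--Ascoli, and Lemma \ref{Stability} plus a direct test-function argument for the oblique condition) coincides with the paper's proof. The difference is the endgame. The paper never compactifies the sequence $(\mathfrak{h}_k)$ and never invokes a uniqueness/comparison theorem for the limit problem: having shown that $u_\infty$ solves the frozen problem, it sets $w_{k_j}:=u_\infty-\mathfrak{h}_{k_j}$, observes $w_{k_j}\in \mathcal{S}(\lambda/n,\Lambda,0)$ with $\beta\cdot Dw_{k_j}+\gamma w_{k_j}=g_\infty-g_{k_j}$ on $\mathrm{T}_{7/8}$ and $w_{k_j}=u_\infty-u_{k_j}$ on the curved boundary, and applies Lemma \ref{ABP} to get $\|w_{k_j}\|_{L^\infty}\le \|u_\infty-u_{k_j}\|_{L^\infty(\partial\mathrm{B}^+_{7/8}\setminus\mathrm{T}_{7/8})}+\|g_\infty-g_{k_j}\|_{L^\infty(\mathrm{T}_{7/8})}\to 0$, which contradicts \eqref{4.2}. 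This buys two things your route has to pay for separately: (i) you need a common modulus of continuity for the $\mathfrak{h}_k$ on all of $\overline{\mathrm{B}^+_{7/8}}$ (including the curved Dirichlet portion and the corner $\partial\mathrm{T}_{7/8}$) before you can extract $\mathfrak{h}_\infty$ — doable via barriers since the Dirichlet data $u_k$ are uniformly H\"older, but it must be said; and (ii) you need a comparison/uniqueness statement for the limit mixed Dirichlet--oblique problem.

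Point (ii) is the one genuine weak spot: Ishii's comparison theorem in \cite{Ishii91}, as cited, is formulated for smooth domains with the oblique condition on the whole boundary and uses monotonicity structure in $u$; here the domain is a half-ball with a corner, the boundary condition is mixed, $\gamma\le 0$ may vanish and $F_\infty(\cdot,0)$ has no zeroth-order term, so that theorem does not literally apply. The fix is internal to the paper: apply Lemma \ref{ABP} to $w=u_\infty-\mathfrak{h}_\infty$ (which lies in $\mathcal{S}(\lambda/n,\Lambda,0)$, satisfies the homogeneous oblique condition on $\mathrm{T}_{7/8}$, and vanishes on the Dirichlet portion) to conclude $w\equiv 0$ — or, more economically, skip the extraction of $\mathfrak{h}_\infty$ altogether and run the paper's estimate on $u_\infty-\mathfrak{h}_{k_j}$. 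With that substitution (and noting that in the contradiction sequences $\beta,\gamma$ should be kept fixed, since no uniform modulus of continuity for them is assumed), your argument is complete; your explicit treatment of the limit passage in the oblique condition at the viscosity level is in fact more careful than the paper's one-line appeal to the Stability Lemma, which as stated only covers the interior equation.
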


\begin{proof}
The proof will be carried out by a \textit{Reductio ad Absurdum} argument. Specifically, suppose, for the sake of contradiction, that there exists a $\delta_{0}>0$ such that the thesis of the Lemma does not hold. Thus, we could find sequences of functions $(F_{k})_{k\in\mathbb{N}}$, $(f_{k})_{k\in \mathbb{N}}$, $(u_{k})_{k\in\mathbb{N}}$, $(\mathfrak{h}_{k})_{k\in\mathbb{N}}$ and $(g_{k})_{k\in\mathbb{N}}$ such that $u_{k}$ and $\mathfrak{h}_{k}$ are viscosity solutions of
$$
	\left\{
	\begin{array}{rclcl}
		F_k(D^{2}u_{k},x) & = & f_{k}(x) &\mbox{in}& \mathrm{B}^{+}_{1} \\
		\beta(x)\cdot Du_{k}(x)+\gamma(x) u_{k}(x) & = & g_{k}(x) &\mbox{on}& \mathrm{T}_{1}\\
		u_{k}(x) & = & \varphi_{k}(x)
		&\mbox{on}& \partial \mathrm{B}^{+}_{1}\setminus \mathrm{T}_{1}
	\end{array}
	\right.
$$
and
$$
	\left\{
	\begin{array}{rclcl}
		F_k(D^{2}\mathfrak{h}_{k},0) & = & 0 &\mbox{in}& \mathrm{B}^{+}_{\frac{7}{8}} \\
		\beta(x)\cdot D\mathfrak{h}_{k}(x)+\gamma(x) \mathfrak{h}_{k}(x) & = & g_{k}(x) &\mbox{on}& \mathrm{T}_{\frac{7}{8}}\\
		\mathfrak{h}_{k}(x)& = &  u_{k}(x)
		&\mbox{on}& \partial \mathrm{B}^{+}_{\frac{7}{8}}\setminus \mathrm{T}_{\frac{7}{8}},
	\end{array}
	\right.
$$
where $\|\varphi_{k}\|_{L^{\infty}(\partial \mathrm{B}_{1}^{+}\setminus \mathrm{T}_{1})}\leq \mathrm{C}_{1}$, $\|g_{k}\|_{C^{0,\alpha}(\overline{\mathrm{T}_{1}})}\leq \mathrm{C}_{2}$ and
$$\intav{\mathrm{B}^{+}_{1}} |\Phi_{F_{k}}(x)|^{p}dx\leq \frac{1}{k^{p} } \ \ \ \mbox{and} \ \ \ \int_{\mathrm{B}^{+}_{1}}|f_{k}(x)|^{p}dx\leq \frac{1} {k^{p}},
$$
however,
\begin{eqnarray}\label{4.2}
	\sup_{\mathrm{B}^{+}_{\frac{7}{8}}}|u_{k}-\mathfrak{h}_{k}|>\delta_{0},\forall k\in\mathbb{ N}.
\end{eqnarray}

Now, by the A.B.P. Maximum Principle (Lemma \ref{ABP}), it follows that
\begin{eqnarray}\label{ukestimate}
	\|u_{k}\|_{L^{\infty}(\mathrm{B}^{+}_{1})}&\leq& \|\varphi_{k}\|_{L^{\infty}(\partial \mathrm{B}^{+}_{1}\setminus\mathrm{T}_{1})}+\mathrm{C}(n,\lambda,\Lambda,\mu_{0})\cdot(\|f_{k}\|_{L^{p}(\mathrm{B}^{+}_{1})}+\|g_{k}\|_{L^{\infty}(\mathrm {T}_{1})})\nonumber\\
	&\leq& \mathrm{C}_{1}+\mathrm{C}(n,\lambda,\Lambda,\mu_{0},p,\varepsilon_{0})\cdot(\|f_{k} \|_{L^{p}(\mathrm{B}^{+}_{1})}+\|g_{k}\|_{C^{0,\alpha^{\prime}}(\overline{\mathrm{T}_{1}})})\nonumber\\
	&\leq& \mathrm{C}(n,\varepsilon_{0},p,\lambda,\Lambda,\mu_{0},\mathrm{C}_{1},\mathrm{C}_{2}), \ \forall k\in\mathbb{N},
\end{eqnarray}

On the other hand, by H\"{o}lder estimates from \cite[Theorem 1.1]{LiZhang}, there exists $\hat{\alpha}\in(0,1)$ depending only on $n$,$\lambda$, $\Lambda$ and $\mu_{0}$ such that $u\in C^{0,\hat{\alpha}}\left(\overline{\mathrm{B}^{+}_{\frac{7}{8}}}\right)$, and the following estimate holds
\begin{eqnarray}\label{4.4}
	\|u_{k}\|_{C^{0,\hat{\alpha}}\left(\overline{\mathrm{B}^{+}_{\frac{7}{8}}}\right)} &\leq& \mathrm{C}(n,\lambda,\Lambda,\mu_{0},\|\gamma\|_{L^{\infty}(\mathrm{T}_{1})}) (\|u_{k}\|_{L^{\infty}(\mathrm{B}^{+}_{1})}+\|f_{k}\|_{L^{p}(\mathrm{B}^{+}_{1})}+\|g_{k}\|_{L^{\infty}(\mathrm{T}_{1})} )\nonumber\\
	&\leq&\mathrm{C}(\|u_{k}\|_{L^{\infty}(\mathrm{B}^{+}_{1})}+\|f_{k}\| _{L^{p}(\mathrm{B}^{+}_{1})}+\|g_{k}\|_{L^{\infty}(\mathrm{T}_{1} )}),
\end{eqnarray}
where $\mathrm{C}=\mathrm{C}(n,\lambda,\Lambda,\mu_{0},p,\varepsilon_{0},\|\gamma\|_{L^{\infty} (\mathrm{T}_{1})})$. Thus, from \eqref{ukestimate} and \eqref{4.4} it follows that
\begin{equation*}
\|u_{k}\|_{C^{0,\hat{\alpha}}\left(\overline{\mathrm{B}^{+}_{\frac{7}{8}}}\right)} \leq C(n,\lambda,\Lambda,\mu_{0},p,\varepsilon_{0}, \|\gamma\|_{L^{\infty}(\mathrm{T}_{1})}, \mathrm{C}_{1},\mathrm{C}_{2}), \forall k\in\mathbb{N}.
\end{equation*}

Thus the sequence $(u_{k})_{k\in\mathbb{N}}$ is uniformly bounded in $C^{0,\hat{\alpha}}\left(\overline{\mathrm{B}^{+} _{\frac{7}{8}}}\right)$. Thus, such a sequence is equicontinuous and equibounded. Similarly, we also reach the same conclusion for the sequence  $(g_{k})_{k\in\mathbb{N}}$ and since the sequence of operators $(F_{k})_{k\in\mathbb{N}}$ is $(\lambda,\Lambda)$-elliptic, it follows that the sequence $(F_{k}(\cdot,0))_{k \in \mathbb{N}}$ is equicontinuous and equibounded on compact sets of $\mbox{Sym}(n)$. Thus, from the Ascoli-Arzelà compactness criterium, we obtain subsequences of functions $(u_{k_{j}})_{j\in\mathbb{N}}$ and $(g_{k_{j}})_{ j\in\mathbb{N}}$ and operators $(F_{k_{j}})_{j\in\mathbb{N}}$ such that
$$
u_{k_{j}}\to u_{\infty} \quad  \text{in} \quad L^{\infty}\left(\overline{\mathrm{B}^{+}_{\frac{7}{8}}}\right) \quad \text{and} \quad g_{k_{j} }\to g_{\infty} \quad  \text{in} \quad L^{\infty}(\overline{\mathrm{T}_{1}})
$$
uniformly, and $F_{k_{j}}(\cdot,x)\to F_{\infty}(\cdot,0)$ uniformly on compact sets of $\mbox{Sym}(n)$, where $F_{\infty}$ is a $(\lambda,\Lambda)$-elliptic operator. Furthermore, for every $\varphi\in C^{2}(\mathrm{B}_{r}(x_{0}))$ to $\mathrm{B}_{r}(x_{0})\subset \mathrm{B}^{+}_{\frac {7}{8}}$ we get

\begin{eqnarray*}
|F_{k_{j}}(D^{2}\varphi(x),x)-f_{k_{j}}(x)-F_{\infty}(D^{2}\varphi(x) ,0)|&\leq&|F_{k_{j}}(D^{2}\varphi(x),x)-F_{k_{j}}(D^{2}\varphi(x),0 )|+\\
&+&|f_{k_{j}}(x)|+\\
&+&|F_{k_{j}}(D^{2}\varphi(x),0)-F_{\infty}(D^{2}\varphi(x),0)|\\
&\leq&|D^{2}\varphi(x)||\Phi_{F_{k_{j}}}(x)|+|f_{k_{j}}(x)|+\\
&+&|F_{k_{j}}(D^{2}\varphi(x),0)-F_{\infty}(D^{2}\varphi(x),0)|,
\end{eqnarray*}
where, by the assumptions on $\Phi_{k_{j}}$ and $f_{k_{j}}$, it follows that
\begin{eqnarray*}
\lim_{j\to \infty}\|F_{k_{j}}(D^{2}\varphi(\cdot),\cdot)-f_{k_{j}}(\cdot)-F_{\infty}(D^{2}\varphi(\cdot),0)\|_{L^{p}(\mathrm{B}_{r}(x_{0}))}=0.
\end{eqnarray*}
Therefore, by the Stability Lemma \ref{Stability} we may conclude that $u_{\infty}$ is a viscosity solution of
\begin{eqnarray*}
	\left\{
	\begin{array}{rclcl}
		F_{\infty}(D^{2}u_{\infty},0)=0 &\mbox{in}& \mathrm{B}^{+}_{\frac{7}{8}} \\
		\beta\cdot Du_{\infty}+\gamma u_{\infty}=g_{\infty} &\mbox{on}& \mathrm{T}_{\frac{7}{8}}.
	\end{array}
	\right.
\end{eqnarray*}

Finally, by defining $w_{k{j}}\defeq u_{\infty}-\mathfrak{h}_{k_{j}}$, we obtain in the viscosity sense that
\begin{eqnarray*}
	\left\{
	\begin{array}{rclcl}
		w_{k_{j}}\in S\left(\frac{\lambda}{n},\Lambda,0\right) &\mbox{in}& \mathrm{B}^{+}_\frac{ 7}{8} \\
		\beta\cdot Dw_{k_{j}}+\gamma w_{k_{j}}=g_{\infty}-g_{k_{j}} &\mbox{on}& \mathrm{T}_{\frac{7}{8}}\\
		w_{k_{j}}=u_{\infty}-u_{k_{j}} &\mbox{on}& \partial\mathrm{B}^{+}_{\frac{7}{8}} \setminus \mathrm{T}_{\frac{7}{8}}.
	\end{array}
	\right.
\end{eqnarray*}
Once again, by the A.B.P. Maximum Principle (Lemma \ref{ABP}), we obtain that
\begin{equation*}
	\|w_{k_{j}}\|_{L^{\infty}\left(\mathrm{B}^{+}_{\frac{7}{8}}\right)}\leq \| u_{\infty}-u_{k_{j}}\|_{L^{\infty}\left(\partial\mathrm{B}^{+}_{\frac{7}{8}}\setminus \mathrm{T}_{\frac{7}{8}}\right)}+\|g_{\infty}-g_{k_{j}}\|_{L^{\infty}\left(\mathrm{T}_{\frac{7}{8}}\right)}\to 0 \quad \text{as} \quad j\to \infty.
\end{equation*}

Therefore, $\mathfrak{h}_{k{j}}\to u_{\infty}$ in $\overline{\mathrm{B}^{+}_{\frac{7}{8}}}$ uniformly, which yields a contradiction to the condition \eqref{4.2} for $j$ large enough, thereby completing the proof.
\end{proof}


Next, as in the previous result, we will need a version of the approximation Lemma similar to \ref{approx}. However, in this case, due to the presence of the source term belonging to $p-\textrm{BMO}$, we must assume that the semi-norm $p-\textrm{BMO}$ is small. In summary, we have the following Lemma.

\begin{lemma}[{\bf Approximation Lemma II - Frozen coefficients case}]\label{lemma4.4.3}
Let $u$ be a normalized viscosity solution of
\begin{eqnarray*}
	\left\{
	\begin{array}{rclcl}
		F(D^{2}u)= f(x) &\mbox{in}&   \mathrm{B}^{+}_{1} \\
		\beta(x)\cdot Du(x)+\gamma(x) u(x)=g(x)  &\mbox{on}&  \mathrm{T}_{1},
	\end{array}
	\right.
\end{eqnarray*}
where $u=\varphi$ on $\partial\mathrm{B}^{+}_{1}\setminus \mathrm{T}_{1}$ and $\varphi\in C^{0}(\partial\mathrm{B}^{+}_{1}\setminus \mathrm{T}_{1})$ such that $\|\varphi\ |_{L^{\infty}(\partial\mathrm{B}^{+}_{1}\setminus \mathrm{T}_{1})}\leq \mathrm{C}_{1}$ for some positive constant $\mathrm{C}_{1}$ and $g\in C^{0,\alpha}(\overline{\mathrm{T}_{1}})$ for some $\alpha\ \in(0,1]$ such that $\|g\|_{C^{0,\alpha}(\overline{\mathrm{T}_{1}})}\leq \mathrm{C}_ {2}$ where $\mathrm{C}_{2}>0$ and $f \in p-\textrm{BMO}(\mathrm{B}^{+}_{1})$ for $p\in[n-\varepsilon_{0},\infty)$ ($\varepsilon_{0}$ is the Escauriazia's constant). Thus, given $\delta>0$, there exists $\eta>0$ depending only on $n$, $\lambda$, $\Lambda$, $p$, $\delta$ such that if
$$
\|f\|_{p-\textrm{BMO}(\mathrm{B}^{+}_{1})}\leq \eta,
$$
then, $\mathfrak{h}$ the viscosity solution of
\begin{eqnarray*}
\left\{
\begin{array}{rclcl}
F(D^{2} \mathfrak{h}) & = & (f)_{1} & \mbox{in} & \mathrm{B}^{+}_{\frac{7}{8}} \\
\beta(x)\cdot D\mathfrak{h}(x) +\gamma(x) \mathfrak{h}(x) &  = & g(x) & \mbox{on} & \mathrm{T}_{\frac{7}{8}}\\
\mathfrak{h}(x) & = & u(x) &\mbox{on}& \partial \mathrm{B}^{+}_{\frac{7}{8}}\setminus \mathrm{T}_{\frac{7}{8}},
\end{array}
\right.
\end{eqnarray*}
then
\begin{eqnarray*}
\sup_{\mathrm{B}^{+}_{\frac{7}{8}}}|u-\mathfrak{h}|\leq \delta.
\end{eqnarray*}
\end{lemma}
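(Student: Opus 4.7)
The plan is to bypass the compactness/contradiction scheme of Lemma \ref{approx} and argue by a direct comparison, exploiting the fact that in the present setting both $u$ and $\mathfrak{h}$ solve equations driven by the \emph{same} frozen operator $F(D^{2}\cdot)$; only the source term changes, from $f$ to its mean value $(f)_{1}$. This makes extraction of subsequences unnecessary.

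First, I would set $w \defeq u-\mathfrak{h}$ and invoke the uniform ellipticity in assumption \textbf{(A1)} to conclude that
$$
\mathscr{P}^{-}_{\lambda,\Lambda}(D^{2}w) \le f(x)-(f)_{1} \le \mathscr{P}^{+}_{\lambda,\Lambda}(D^{2}w) \quad \text{in} \,\,\, \mathrm{B}^{+}_{\frac{7}{8}}
$$
in the viscosity sense, so that $w \in \mathcal{S}(\lambda,\Lambda,f-(f)_{1})$. Since $u$ and $\mathfrak{h}$ share the same oblique boundary data and the same trace on the remaining portion of $\partial \mathrm{B}^{+}_{\frac{7}{8}}$, one obtains $\beta\cdot Dw+\gamma w=0$ on $\mathrm{T}_{\frac{7}{8}}$ and $w \equiv 0$ on $\partial \mathrm{B}^{+}_{\frac{7}{8}}\setminus \mathrm{T}_{\frac{7}{8}}$. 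The A.B.P. Maximum Principle (Lemma \ref{ABP}) then delivers
$$
\sup_{\mathrm{B}^{+}_{\frac{7}{8}}}|w| \;\le\; \mathrm{C}(n,\lambda,\Lambda,\mu_{0}) \, \|f-(f)_{1}\|_{L^{n-\varepsilon_{0}}(\mathrm{B}^{+}_{\frac{7}{8}})}.
$$

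Next, I would dominate the right-hand side in terms of $\|f\|_{p-\textrm{BMO}(\mathrm{B}^{+}_{1})}$. Using that $p \ge n-\varepsilon_{0}$ and $\mathrm{B}^{+}_{\frac{7}{8}}\subset \mathrm{B}^{+}_{1}$, H\"{o}lder's inequality followed by the definition of the $p$-BMO seminorm (Definition \ref{DefBMO}, applied at $x_{0}=\vec{0}$ with $\rho=1$) gives
$$
\|f-(f)_{1}\|_{L^{n-\varepsilon_{0}}(\mathrm{B}^{+}_{\frac{7}{8}})} \;\le\; \mathrm{C}(n,p) \, \|f\|_{p-\textrm{BMO}(\mathrm{B}^{+}_{1})} \;\le\; \mathrm{C}(n,p)\,\eta.
$$
Combining the two displays, it then suffices to choose $\eta$ so that $\mathrm{C}(n,\lambda,\Lambda,\mu_{0},p)\,\eta \le \delta$ to obtain the assertion.

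Concerning the main obstacle: there is no serious analytical difficulty here, since the operator is frozen and the A.B.P. principle is already at our disposal. The only delicate points are bookkeeping---verifying that $w$ inherits both viscosity Pucci inequalities from the ellipticity of $F$ and that the oblique datum cancels exactly on $\mathrm{T}_{\frac{7}{8}}$---together with the careful interplay of the exponents $n-\varepsilon_{0}$ and $p$ when passing from $p$-BMO control to $L^{n-\varepsilon_{0}}$ control. As an alternative, one could mimic the compactness scheme of Lemma \ref{approx}: the smallness $\|f_{k}\|_{p-\textrm{BMO}}\to 0$ would force $f_{k}-(f_{k})_{1}\to 0$ in $L^{p}(\mathrm{B}^{+}_{1})$, and an application of Lemma \ref{Stability} together with the A.B.P. principle on the limiting profile would close the contradiction; however, the direct route above seems shorter and more transparent.
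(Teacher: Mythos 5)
Your argument is correct in substance, and it takes a genuinely different route from the paper. The paper proves Lemma \ref{lemma4.4.3} by the same compactness scheme as Lemma \ref{approx}: a contradiction hypothesis, uniform H\"{o}lder bounds via Lemma \ref{ABP} and \cite[Theorem 1.1]{LiZhang}, Ascoli--Arzel\`{a}, the Stability Lemma \ref{Stability} (together with a bound of the form $\|f_k\|_{L^p}\le \mathrm{C}_p\|f_k\|_{p-\textrm{BMO}}$ used to pass to the limit in the equation), and a final A.B.P. comparison of the limit profile with $\mathfrak{h}_{k_j}$. You instead exploit what is special about the frozen case --- $u$ and $\mathfrak{h}$ solve equations driven by the \emph{same} operator $F$ with the \emph{same} oblique datum $g$ --- and estimate $w=u-\mathfrak{h}$ directly by A.B.P. against $\|f-(f)_1\|$, which the $p$-BMO seminorm controls precisely because BMO measures deviation from the mean. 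This buys an explicit, quantitative choice $\eta\simeq \delta/\mathrm{C}$ (the compactness proof gives no rate), dispenses with the stability/compactness machinery, and avoids the delicate step in the paper where the full $L^p$ norm of $f_k$ is controlled by its $p$-BMO seminorm, since you only ever need the oscillation $f-(f)_1$. Note that in Lemma \ref{approx}, where the coefficients of $F$ depend on $x$ and the comparison problem has a different (frozen) operator, this shortcut is unavailable, which is why the compactness route is genuinely needed there.

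Two caveats you should address. First, the difference of two merely continuous viscosity solutions of equations governed by the same uniformly elliptic $F$ lies in $\mathcal{S}\left(\frac{\lambda}{n},\Lambda, f-(f)_1\right)$, not $\mathcal{S}(\lambda,\Lambda,\cdot)$ (cf. \cite[Theorem 5.3]{CC}; the paper itself records the class $S\left(\frac{\lambda}{n},\Lambda,0\right)$ for such differences), and the corresponding subtraction for the oblique condition on $\mathrm{T}_{\frac{7}{8}}$ also requires justification --- the paper invokes it without proof as well, and it is eased by the $C^{1,\alpha}$-up-to-the-flat-boundary regularity of $\mathfrak{h}$ from \cite[Theorem 1.2]{LiZhang} when, as in the paper's applications, $\beta,\gamma$ are H\"{o}lder continuous --- so you are at the paper's level of rigor, but the constants should be stated correctly. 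Second, and more substantively, the degradation $\lambda\mapsto\frac{\lambda}{n}$ propagates to the Escauriaza exponent: applying Lemma \ref{ABP} to $w$ gives the bound with $\|f-(f)_1\|_{L^{n-\varepsilon_0'}}$, where $\varepsilon_0'=\varepsilon_0(n,\frac{\lambda}{n},\Lambda,\mu_0)\le\varepsilon_0(n,\lambda,\Lambda,\mu_0)$, and your H\"{o}lder step needs $n-\varepsilon_0'\le p$; for $p$ at the very bottom of the admissible range $[n-\varepsilon_{0},\infty)$ this may fail. The paper's proof is immune to this because its final A.B.P. comparison has zero source term. The fix is cheap --- either read the hypothesis with $\varepsilon_0$ the Escauriaza constant attached to $(\frac{\lambda}{n},\Lambda,\mu_0)$, or treat the borderline values of $p$ by the compactness fallback you sketch at the end --- but as written the direct argument does not cover every $p\ge n-\varepsilon_{0}$.
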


\begin{proof}
The proof is closely similar to the one in Lemma \eqref{approx}, but we will present the changes with respect to Lemma \ref{approx} for the reader's convenience. For this, let us assume, for the sake of contradiction, that the thesis of the lemma is false. Then, there exists a positive constant $\delta_{0}$ and sequences of functions  $(F_{k})_{k\in\mathbb{N}}$, $(f_{k})_{k\in \mathbb{N}}$, $(u_{k})_{k\in\mathbb{N}}$, $(\mathfrak{h}_{k})_{k\in\mathbb {N}}$ and $(g_{k})_{k\in\mathbb{N}}$ such that $u_{k}$ and $v_{k}$ satisfy in the viscosity sense
\begin{eqnarray*}
\left\{
\begin{array}{rclcl}
F(D^{2}u_{k}) & = & f_{k}(x) &\mbox{in}& \mathrm{B}^{+}_{1} \\
\beta(x)\cdot Du_{k}(x)+\gamma(x) u_{k}(x) & = & g_{k}(x) &\mbox{on}& \mathrm{T}_{1}\\
u_{k}(x) & = & \varphi_{k}(x) &\mbox{in}& \partial \mathrm{B}^{+}_{1}\setminus \mathrm{T}_{1}
\end{array}
\right.
\end{eqnarray*}
and
\begin{eqnarray*}
\left\{
\begin{array}{rclcl}
F(D^{2}h_{k}) & = & (f_{k})_{1} & \mbox{in} & \mathrm{B}^{+}_{\frac{7}{8}} \\
\beta(x)\cdot Dh_{k}(x)+\gamma(x) h_{k}(x) & = & g_{k}(x) &\mbox{on}& \mathrm{T}_{\frac{7}{8}}\\
h_{k}(x) & = & u_{k}(x) & \mbox{on} & \partial \mathrm{B}^{+}_{\frac{7}{8}}\setminus \mathrm{T}_{\frac{7}{8}},
\end{array}
\right.
\end{eqnarray*}
where $\|\varphi_{k}\|_{L^{\infty}(\partial \mathrm{B}_{1}^{+}\setminus \mathrm{T}_{1})}\leq \mathrm{C}_{1}$, $\|g_{k}\|_{C^{0,\alpha}(\overline{\mathrm{T}_{1}})}\leq \mathrm{C}_{2}$ and $\|f_{k}\|_{p-\textrm{BMO}(\mathrm{B}^{+}_{1})}\leq \frac{1}{k}$, however
\begin{eqnarray}\label{(87)}
\sup_{\mathrm{B}^{+}_{\frac{7}{8}}} |u_{k}-v_{k}|>\delta_{0}, \,\,\forall k\in\mathbb{N}.
\end{eqnarray}

Since we have control over the $p-\textrm{BMO}$ semi-norm, we also control the $L^{p}$-norm of $f$ (cf. \cite{STEIN} and \cite{LLO}), such that
\begin{eqnarray*}
\|f_{k}\|_{L^{p}(\mathrm{B}^{+}_{1})}\leq \mathrm{C}_{p}\|f_{k}\| _{p-\textrm{BMO}(\mathrm{B}^{+}_{1})}\leq \frac{1}{k}\mathrm{C}_{p},
\end{eqnarray*}
for some positive constant $\mathrm{C}_{p}$ that depends only on $p$ (cf. Stein's Book \cite[Chapter IV]{STEIN} and the fact that $p-\textrm{BMO}$ semi-norms are equivalent, see \cite{LLO}). Furthermore, by the A.B.P. estimate (Lemma \ref{ABP}) and \cite[Theorem 1.1]{LiZhang}, we conclude that
\begin{equation*}
\|u_{k}\|_{C^{0,\alpha^{\prime}}\left(\overline{\mathrm{B}^{+}_{\frac{7}{8}}}\right)} \leq \mathrm{C}(n,\lambda,\Lambda,\mu_{0},p,\varepsilon_{0}, \|\gamma\|_{L^{\infty}(\mathrm{T}_{1} )}, \mathrm{C}_{1},\mathrm{C}_{2}), \,\,\,\forall k\in\mathbb{N}.
\end{equation*}

In this case, we find that $(u_{k})_{k\in\mathbb{N}}$ is a bounded sequence in $C^{0,\alpha^{\prime}}\left(\overline{\mathrm{B} ^{+}_{\frac{7}{8}}}\right)$. With this bound, it follows that such a sequence is equicontinuous and point-wise bounded. Similarly, as in the proof of Lemma \ref{approx}, up to a subsequence,  $F_{k_{j}}$ converges to a $(\lambda, \Lambda)$-elliptic operator $F_{\infty}$ , $g_{k_{j}}\to g_{\infty}$ in $L^{\infty}(\overline{\mathrm{T}_{1}})$ and $u_{k_{j}}\to u_{\infty}$ in $L^{\infty}(\overline{\mathrm{B}_{\frac{7}{8}}^{+}})$ when $j\to \infty$. Furthermore, since $\|f_{k_{j}}\|_{L^p(\mathrm{B}^{+}_{1})}\leq \frac{1}{k_{j}}$ for every $k\in\mathbb{N}$, it follows that the sequence $(f_{k_{j}})$ is Cauchy in $L^{p}(\mathrm{B}^{+}_{1} )$. Therefore, there is $f_{\infty}\in L^{p}(\mathrm{B}^{+}_{1})$ such that $f_{k_{j}}\to f_{\infty } = 0$ in $L^{p}(\mathrm{B}^{+}_{1})$. Hence, for every $\varphi\in C^{2}(\mathrm{B}_{r}(x_{0}))$ with $\mathrm{B}_{r}(x_{0})\subset \mathrm{B}^{+}_{\frac{7}{8}}$, we obtain
\begin{eqnarray*}
|F_{k_{j}}(D^{2}\varphi(x))-f_{k_{j}}(x)-F_{\infty}(D^{2}\varphi(x))- (f_{\infty})_{1}|&\leq&|F_{k_{j}}(D^{2}\varphi(x))-F_{\infty}(D^{2}\varphi( x))|+\\
&+&|f_{k_{j}}(x)-(f_{k_{j}})_{1}|+\\
&+&|(f_{k_{j}})_{1}-(f_{\infty})_{1}|,
\end{eqnarray*}
where, by the above assumptions on $f_{k_{j}}$ and the convergence $F_{k_{j}}\to F_{\infty}$ on compact sets of $\mathrm{Sym}(n)$, it follows that
\begin{eqnarray*}
\lim_{j\to \infty}\|F_{k_{j}}(D^{2}\varphi(\cdot))-f_{k_{j}}(\cdot)-F_{\infty}( D^{2}\varphi(\cdot))-(f_{\infty})_{1}\|_{L^{p}(\mathrm{B}_{r}(x_{0}))}=0.
\end{eqnarray*}
Therefore, by Stability Lemma $\ref{Stability}$, we can conclude that $u_{\infty}$ is a viscosity solution of
$$
\left\{
\begin{array}{rclcl}
F_{\infty}(D^{2}u_{\infty}) & = & (f_{\infty})_{1} &\mbox{in}& \mathrm{B}^{+}_{\frac{7}{8}} \\
\beta\cdot Du_{\infty}+\gamma u_{\infty} & = & g_{\infty} &\mbox{on}& \mathrm{T}_{\frac{7}{8}}.
\end{array}
\right.
$$
Thus, by defining $w_{k{j}}\defeq u_{\infty}-\mathfrak{h}_{k_{j}}$ we have, analogously to Lemma \ref{approx}, that $w_{k_{j}}\to 0$ in $\overline{\mathrm{B}^{+}_{\frac{7}{8}}}$ as $j\to \infty$, and so $\mathfrak{h}_{k_{j}}\to u_{\infty}$, which is a contradiction.
\end{proof}


\section{Optimal H\"{o}lder estimates}

This section is devoted to proving H\"{o}lder regularity estimates for solutions of \eqref{1} under suitable assumptions on the problem's data. For such a purpose, the next result constitutes the first step in a sophisticated geometric approximation process, which will yield in the desired H\"{o}lder estimate.

\begin{proposition}\label{compI}
Let $u$ be a viscosity solution of \eqref{1}, where $\beta$, $\gamma$, $g\in C^{0,\alpha}(\overline{\mathrm{T}_{1}})$ for some $\alpha\in(0,1)$. Given $\overline{\alpha}\in (0,\alpha)$, there exist constants $\eta>0$ and $\rho\in\left(0,\frac{1}{2}\right]$ depending only on  $n$, $p$, $\lambda$, $\Lambda$, $\mu_{0}$,  $\alpha$, $\overline{\alpha}$, $\|\beta\|_{C^{0,\alpha}(\overline{\mathrm{T}_{1}})}$, $\|\gamma\|_{C^{0,\alpha}(\overline{\mathrm{T}_{1}})}$ and $\|g\|_{C^{0,\alpha}(\overline{\mathrm{T}_{1}})}$ such that if
$$
\max\left\{ \left(\intav{\mathrm{B}^{+}_{1}} |\Phi_{F}(x)|^{n}dx\right)^{1/n}, \, \left( \int_{\mathrm{B}^{+}_{1}}|f(x)|^{p}dx \right)^{1/p}\right\}\leq \eta^{n},
$$
for $p\in[n-\varepsilon_{0},n)$, then there exists a constant $\mu \in \mathbb{R}$, universality bounded in following sense
$$
|\mu|\leq \mathrm{C}(n,\lambda,\Lambda,\mu_{0},\alpha,\|\beta\|_{C^{0,\alpha}(\overline{\mathrm{T}_{1}})},\|\gamma\|_{C^{0,\alpha}(\overline{\mathrm{T}_{1}})},\|g\|_{C^{0,\alpha}(\overline{\mathrm{T}_{1}})}),
$$
such that
$$
\sup_{\mathrm{B}^{+}_{\rho}}|u-\mu|\leq \rho^{\overline{\alpha}}.
$$
\end{proposition}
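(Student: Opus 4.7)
The plan is to combine the compactness/approximation device furnished by Lemma~\ref{approx} with the boundary regularity theory for the homogeneous, constant-coefficient oblique problem. The underlying idea is the standard Caffarelli-type tangential analysis: under the smallness of $\Phi_F$ and $f$, I approximate $u$ uniformly on $\mathrm{B}^+_{7/8}$ by a solution $\mathfrak{h}$ of a much cleaner oblique problem with frozen coefficients, and then transfer the better modulus of continuity of $\mathfrak{h}$ down to $u$ on a smaller half-ball, balancing the approximation error against the Lipschitz behaviour of $\mathfrak{h}$ to deliver the $\rho^{\overline{\alpha}}$ decay.

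More concretely, fix $\overline{\alpha}\in(0,\alpha)$ and let $\delta>0$ and $\rho\in(0,1/2]$ be free parameters to be adjusted at the end. After a standard normalization (which is harmless, since Lemma~\ref{ABP} provides a universal a priori $L^\infty$-bound for $u$), I apply Lemma~\ref{approx} to obtain a universal threshold $\eta_0=\eta_0(\delta,n,\lambda,\Lambda,\mu_0,p,\ldots)$ such that, provided $\|\Phi_F\|_{L^p(\mathrm{B}^+_1)}$ and $\|f\|_{L^p(\mathrm{B}^+_1)}$ are below $\eta_0$, the companion homogeneous problem admits a viscosity solution $\mathfrak{h}$ with $\sup_{\mathrm{B}^+_{7/8}}|u-\mathfrak{h}|\le\delta$. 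Note that the hypothesis $\|\Phi_F\|_{L^n(\mathrm{B}^+_1)}\le\eta^n$ implies, via H\"{o}lder's inequality and $p<n$, a bound $\|\Phi_F\|_{L^p(\mathrm{B}^+_1)}\le \mathrm{C}(n,p)\,\eta^{n}$, so after shrinking the parameter $\eta$ we can ensure the smallness required by Lemma~\ref{approx}.

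Next I invoke the boundary regularity theory in \cite{LiZhang} (see also \cite{MilSil06} for the Neumann case). Since $F(\cdot,0)$ is a $(\lambda,\Lambda)$-elliptic operator with constant coefficients, the oblique data $\beta,\gamma,g$ belong to $C^{0,\alpha}(\overline{\mathrm{T}_{7/8}})$, and $\|\mathfrak{h}\|_{L^\infty(\mathrm{B}^+_{7/8})}$ is universally bounded (by Lemma~\ref{ABP} applied to $\mathfrak{h}$, using the normalized trace of $u$ on $\partial\mathrm{B}^+_{7/8}\setminus\mathrm{T}_{7/8}$ and the oblique data $g$), there exist universal constants $L>0$ and $\alpha_0\in(0,1)$ such that $\|\mathfrak{h}\|_{C^{1,\alpha_0}(\overline{\mathrm{B}^+_{1/2}})}\le L$. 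In particular $|\mathfrak{h}(x)-\mathfrak{h}(0)|\le L|x|$ for every $x\in\overline{\mathrm{B}^+_{1/2}}$. I then set $\mu\defeq\mathfrak{h}(0)$; the triangle inequality yields, for every $x\in\mathrm{B}^+_\rho$,
$$
|u(x)-\mu|\le|u(x)-\mathfrak{h}(x)|+|\mathfrak{h}(x)-\mathfrak{h}(0)|\le\delta+L\rho.
$$
Since $\overline{\alpha}<1$, I choose $\rho\in(0,1/2]$ so small that $L\rho\le\tfrac{1}{2}\rho^{\overline{\alpha}}$ (it suffices to take $\rho\le(2L)^{-1/(1-\overline{\alpha})}$) and then put $\delta\defeq\tfrac{1}{2}\rho^{\overline{\alpha}}$; this last choice fixes the threshold $\eta$ through Lemma~\ref{approx}. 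The universal bound on $\mu$ follows from $|\mu|\le\|\mathfrak{h}\|_{L^\infty(\mathrm{B}^+_{7/8})}$, itself controlled by Lemma~\ref{ABP} and the smallness of $f$.

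The principal obstacle in this scheme is ensuring the universal, up-to-the-boundary $C^{1,\alpha_0}$ regularity of $\mathfrak{h}$ under merely $C^{0,\alpha}$ data $\beta,\gamma,g$; once that is guaranteed by \cite{LiZhang}, everything else is bookkeeping of universal constants, the harmless normalization of $u$, and the passage from $L^n$-smallness of $\Phi_F$ to the $L^p$-smallness required by Lemma~\ref{approx}.
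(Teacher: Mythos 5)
Your proposal is correct and follows essentially the same route as the paper's proof: apply the Approximation Lemma \ref{approx} to produce the frozen-coefficient solution $\mathfrak{h}$, use the $C^{1,\alpha}$ boundary estimate of \cite{LiZhang} to get $|\mathfrak{h}(x)-\mathfrak{h}(0)|\leq \mathrm{C}|x|$, set $\mu=\mathfrak{h}(0)$, and tune $\rho$ and $\delta=\tfrac{1}{2}\rho^{\overline{\alpha}}$ so the triangle inequality closes. Your explicit passage from the $L^{n}$-smallness of $\Phi_{F}$ to the $L^{p}$-smallness needed in Lemma \ref{approx} via H\"older on the half-ball is a detail the paper leaves implicit, but it is consistent with the argument.
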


\begin{proof}
Fix $\delta>0$ to be chosen \textit{a posteriori}. We apply the Lemma \ref{approx} to find a function $\mathfrak{h}$ such that
\begin{eqnarray*}
\left\{
\begin{array}{rclcl}
F(D^{2}\mathfrak{h},0)=0 &\mbox{in}&   \mathrm{B}^{+}_{\frac{7}{8}} \\
\beta\cdot D \mathfrak{h}+\gamma \mathfrak{h}=g  &\mbox{on}&  \mathrm{T}_{\frac{7}{8}}\\
\mathfrak{h}= u &\mbox{on}& \partial \mathrm{B}^{+}_{\frac{7}{8}}\setminus \mathrm{T}_{\frac{7}{8}},
\end{array}
\right.
\end{eqnarray*}
such that
\begin{eqnarray}\label{4.5}
\sup_{\mathrm{B}^{+}_{\frac{7}{8}}}|u-\mathfrak{h}|\leq \delta
\end{eqnarray}

Now, by the $C^{1,\alpha}$ regularity theory for equations with frozen coefficients and oblique boundary conditions (see \cite[Theorem 1.2]{LiZhang} for details), it follows that $\mathfrak{h}\in C^{1,\alpha}\left(\overline{\mathrm{B}^{+}_{\frac{2}{3}}}\right)$ and
\begin{eqnarray*}
\|\mathfrak{h}\|_{C^{1,\alpha}\left(\overline{\mathrm{B}^{+}_{\frac{2}{3}}}\right)}\leq \mathrm{C}\left(\|\mathfrak{h}\|_{L^{\infty}\left(\mathrm{B}^{+}_{\frac{7}{8}}\right)}+\|g\|_{C^{0,\alpha}\left(\overline{\mathrm{T}_{\frac{7}{8}}}\right)}\right),
\end{eqnarray*}
where $\mathrm{C}>0$ is a constant depending only on $n$, $\lambda$, $\Lambda$, $\mu_{0}$, $\alpha$, $\|\beta\|_{C^{0,\alpha}(\overline{\mathrm{T}_{1}})}$, and $\|\gamma\|_{C^{0,\alpha}(\overline{\mathrm{T}_{1}})}$. Thus, the last estimate, together with the fact that $u$ is normalized, ensures that
\begin{eqnarray}\label{4.6}
\|\mathfrak{h}\|_{C^{1,\alpha}\left(\overline{\mathrm{B}^{+}_{\frac{2}{3}}}\right)}\leq\tilde{\mathrm{C}}=\tilde{\mathrm{C}}(n,\lambda,\Lambda,\mu_{0},\alpha,\|\beta\|_{C^{0,\alpha}(\overline{\mathrm{T}_{1}})},\|\gamma\|_{C^{0,\alpha}(\overline{\mathrm{T}_{1}})},\|g\|_{C^{0,\alpha}(\overline{\mathrm{T}_{1}})}).
\end{eqnarray}
In particular, it follows by the Value Mean Inequality for all $x\in \mathrm{B}^{+}{r}$ and $r\in\left(0,\frac{1}{2}\right)$, the following 
\begin{eqnarray}\label{4.7}
\displaystyle \sup_{x\in \mathrm{B}^{+}_{r}}|\mathfrak{h}(x)-\mathfrak{h}(0)|\leq \tilde{\mathrm{C}}r.
\end{eqnarray}

Now, for a given $\tilde{\alpha}\in(0,1)$, we make the following choices
\begin{equation}\label{EqRadius}
\rho \defeq \min\left\{\frac{1}{2}, \,\left(\frac{1}{2\tilde{\mathrm{C}}}\right)^{\frac{1}{1-\tilde{\alpha}}}\right\} \ \ \mbox{and} \ \ \delta \defeq \frac{1}{2}\rho^{\tilde{\alpha}}.
\end{equation}
Such choices determine the constant $\eta$, liked through the Approximation Lemma \ref{approx}. Finally, choosing $\mu=\mathfrak{h}(0)$, it follows by $\eqref{4.6}$ that $|\mu|\leq \tilde{\mathrm{C}}$.

Furthermore, by $\eqref{4.5}$, $\eqref{4.7}$ and \eqref{EqRadius},	we have
\begin{eqnarray*}
\sup_{\mathrm{B}^{+}_{\rho}}|u-\mu|&\leq& \sup_{\mathrm{B}^{+}_{\rho}}|u-\mathfrak{h}|+\sup_{\mathrm{B}^{+}_{\rho}}|\mathfrak{h}-\mu|\\
&\stackrel{\rho<\frac{7}{8}}{\leq}&\sup_{\mathrm{B}^{+}_{\frac{7}{8}}}|u-\mathfrak{h}|+\sup_{\mathrm{B}^{+}_{\rho}}|\mathfrak{h}-\mu|\\
&\leq& \delta +\mathrm{C}\rho\\
&\le&\rho^{\tilde{\alpha}}.
\end{eqnarray*}
which concludes the proof of the Lemma.
\end{proof}

In the next results, $\varepsilon_{0}\in\left(0,\frac{n}{2}\right)$ denotes the Escauriaza's constant (see \cite{Esc}).

\begin{theorem}\label{holderoptimal}
Let $u$ be a viscosity solution of $\eqref{1}$, where $\beta,\gamma, g\in C^{0,\alpha}(\overline{\mathrm{T}_{1}} )$ for some $\alpha\in(0,1)$ and $f\in L^{p}(\mathrm{B}^{+}_{1})\cap C^{0 }(\mathrm{B}^{+}_{1})$ to $p\in[n-\varepsilon_{0},n)$. There exists a constant $\eta_{0}>0$, depending only on $n$, $p$, $\lambda$, $\Lambda$, $\mu_{0}$, $\alpha$, $\|\beta \|_{C^{0,\alpha}(\overline{\mathrm{T}_{1}})}$ and $\|\gamma\|_{C^{0,\alpha}(\overline {\mathrm{T}_{1}})}$, such that if
\begin{eqnarray}\label{hypothesis1theorem4.1.2}
\intav{\mathrm{B}^{+}_{r}}|\Phi_{F}(x, y)|^{n}dx\leq \eta_{0}^{n}, \,\, \forall y\in \mathrm{B}^{+}_{\frac{1}{2}}, \forall r\in \left(0,\frac{1}{2}\right),
\end{eqnarray}
then $u\in C^{0,\frac{2p-n}{p}}\left(\overline{\mathrm{B}^{+}_{\frac{1}{2}}}\right)$. Moreover, the following estimate holds
\begin{eqnarray*}
\|u\|_{C^{0,\frac{2p-n}{p}}\left(\overline{\mathrm{B}^{+}_{\frac{1}{2}}}\right)} \leq \mathrm{C}(\|u\|_{L^{\infty}(\mathrm{B}^{+}_{1})}+\|f\|_{L^{p} (\mathrm{B}^{+}_{1})}+\|g\|_{C^{0,\alpha}(\overline{\mathrm{T}_{1}})})
\end{eqnarray*}
where $\mathrm{C}=\mathrm{C}(n,\lambda,\Lambda,\mu_{0},p,\alpha,\|\beta\|_{C^{0,\alpha}( \overline{\mathrm{T}_{1}})},\|\gamma\|_{C^{0,\alpha}(\overline{\mathrm{T}_{1}})})$ is a positive constant.
\end{theorem}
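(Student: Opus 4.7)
My plan is to combine a normalization step with a geometric iteration at boundary points based on the one-step approximation Proposition \ref{compI}, and then patch with the interior theory to obtain a global estimate on $\overline{\mathrm{B}^{+}_{1/2}}$. Define the target exponent
$$
\overline{\alpha} \defeq \frac{2p-n}{p} = 2 - \frac{n}{p} \in (0,1).
$$
Fix $\eta>0$ and $\rho\in(0,1/2]$ as produced by Proposition \ref{compI} for this choice of $\overline{\alpha}$ (shrinking if necessary so that $\overline{\alpha}<\alpha$; otherwise the boundary data regularity is the binding constraint and the proof goes through with $\alpha$ replacing $\overline{\alpha}$). Replacing $u$ by $u/K$ for a universal
$$
K \defeq \|u\|_{L^{\infty}(\mathrm{B}^{+}_{1})}+\eta^{-1}\|f\|_{L^{p}(\mathrm{B}^{+}_{1})}+\eta^{-1}\|g\|_{C^{0,\alpha}(\overline{\mathrm{T}_{1}})},
$$
we can assume the quantitative smallness hypotheses of Proposition \ref{compI} hold at scale one. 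This normalization is where the final constant $\mathrm{C}$ in the theorem is assembled.

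The heart of the argument is an inductive construction of a sequence $(\mu_k)_{k\geq 0}\subset\mathbb{R}$ with $\mu_0=0$ such that, for every $k$,
\begin{equation*}
\sup_{\mathrm{B}^{+}_{\rho^k}}|u-\mu_k|\le \rho^{k\overline{\alpha}},\qquad |\mu_{k+1}-\mu_k|\le \mathrm{C}\,\rho^{k\overline{\alpha}}.
\end{equation*}
Assuming the $k$-th bound, I would rescale
$$
u_k(x) \defeq \rho^{-k\overline{\alpha}}\bigl(u(\rho^k x)-\mu_k\bigr),\qquad x\in\mathrm{B}^{+}_1,
$$
so that $\|u_k\|_{L^{\infty}(\mathrm{B}^{+}_{1})}\le 1$ and $u_k$ solves a problem of the type \eqref{1} with rescaled data
$$
F_k(\mathrm{M},x)=\rho^{2-\overline{\alpha}}F(\rho^{\overline{\alpha}-2}\mathrm{M},\rho^k x),\quad f_k(x)=\rho^{k(2-\overline{\alpha})}f(\rho^k x),
$$
and with boundary ingredients
$$
\beta_k(x)=\beta(\rho^k x),\ \gamma_k(x)=\rho^k\gamma(\rho^k x),\ g_k(x)=\rho^{k(1-\overline{\alpha})}g(\rho^k x)-\rho^{k(1-\overline{\alpha})}\gamma(\rho^k x)\mu_k.
$$
The $(\lambda,\Lambda)$-ellipticity of $F_k$, the pointwise bound $\|\beta_k\|_{L^{\infty}}\le 1$, and the sign condition $\gamma_k\le 0$ are preserved automatically, and one checks $\Phi_{F_k}(x)=\Phi_F(\rho^k x)$, so the hypothesis \eqref{hypothesis1theorem4.1.2} transfers in the form required by Proposition \ref{compI}. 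The crucial computation is for the source term:
$$
\|f_k\|_{L^{p}(\mathrm{B}^{+}_{1})}=\rho^{k(2-\overline{\alpha}-\tfrac{n}{p})}\|f\|_{L^{p}(\mathrm{B}^{+}_{\rho^k})}=\|f\|_{L^{p}(\mathrm{B}^{+}_{\rho^k})}\le\eta^n,
$$
where the pivotal identity $2-\overline{\alpha}-n/p=0$ is precisely the \textit{scaling} that singles out the exponent $\overline{\alpha}=(2p-n)/p$. The rescaled Hölder norms of $\beta_k,\gamma_k,g_k$ stay uniformly bounded using $\rho<1$, $\overline{\alpha}\le 1$, and a uniform bound $|\mu_k|\le \mathrm{C}(1-\rho^{\overline{\alpha}})^{-1}$ coming from the telescoping of the previous step's inequalities.

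Applying Proposition \ref{compI} to $u_k$ produces a constant $\nu_k$ with $|\nu_k|\le\mathrm{C}$ and $\sup_{\mathrm{B}^{+}_{\rho}}|u_k-\nu_k|\le\rho^{\overline{\alpha}}$; unwinding this and defining $\mu_{k+1}\defeq\mu_k+\rho^{k\overline{\alpha}}\nu_k$ closes the induction. Since the partial sums form a Cauchy sequence, $\mu_k\to u(0)$ and, given any $r\in(0,\rho]$, selecting the integer $k$ with $\rho^{k+1}<r\le\rho^k$ yields $\sup_{\mathrm{B}^{+}_{r}}|u-u(0)|\le\mathrm{C}\,r^{\overline{\alpha}}$. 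Since the whole construction is translation-invariant along $\mathrm{T}_{1/2}$, the same estimate is obtained at every boundary point $y\in\mathrm{T}_{1/2}$.

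To close, I would upgrade the boundary pointwise Hölder decay to a full $C^{0,\overline{\alpha}}$ seminorm bound on $\overline{\mathrm{B}^{+}_{1/2}}$ by the standard dichotomy: for two points $y_1,y_2\in\mathrm{B}^{+}_{1/2}$, let $d_i=\mathrm{dist}(y_i,\mathrm{T}_1)$. If $|y_1-y_2|\geq\tfrac{1}{2}\max(d_1,d_2)$, the required bound is reduced to the boundary estimates already obtained by inserting the foot of the normal; if $|y_1-y_2|<\tfrac{1}{2}\max(d_1,d_2)$, the two points sit inside a ball $\mathrm{B}_{d}(y_0)\subset\mathrm{B}^{+}_{1}$, and Caffarelli's interior $C^{0,\alpha}_{\mathrm{loc}}$ estimate (or its variant for $L^p$ source terms with the Escauriaza exponent) applied to $u$ on that ball, together with the already-proven bound $\|u\|_{L^{\infty}(\mathrm{B}_{d}(y_0))}\le \mathrm{C}\,d^{\overline{\alpha}}$ from the boundary decay, delivers the same exponent $\overline{\alpha}$ via a standard rescaling. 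The main technical difficulty I foresee is bookkeeping for the boundary triple $(\beta_k,\gamma_k,g_k)$ in the iteration, in particular controlling the drift term $\rho^{k(1-\overline{\alpha})}\gamma_k\mu_k$ uniformly in $k$; this is why the assumption $\gamma\le 0$ and the $L^{\infty}$-normalization $\|\beta\|_{L^{\infty}}\leq 1$ are essential, since they keep the A.B.P. and $C^{0,\hat{\alpha}}$ constants stable under the rescaling, so Proposition \ref{compI} can be reapplied with the same $\eta$ and $\rho$ at every step.
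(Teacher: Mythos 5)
Your proposal follows essentially the same route as the paper's proof: normalize the data, iterate Proposition \ref{compI} at boundary points with the rescaling whose exponent $\overline{\alpha}=2-\frac{n}{p}$ is exactly the one fixed by the $L^{p}$-invariance of the source term, pass to the limit $\mu_{k}\to u(y)$ to get the pointwise boundary decay, and then patch with the interior estimates via the standard boundary/interior dichotomy. The only slips are cosmetic and do not affect the argument (e.g.\ the missing factor $k$ in the exponents of $F_{k}$, and in the interior case the decay should be stated for the oscillation $\|u-u(\hat{y}_{0})\|_{L^{\infty}(\mathrm{B}_{d}(y_{0}))}$ relative to the value at the boundary projection $\hat{y}_{0}$, rather than for $\|u\|_{L^{\infty}(\mathrm{B}_{d}(y_{0}))}$ itself).
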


\begin{proof}
Initially, we can assume, without loss of generality, that
$$
\|u\|_{L^{\infty}(\mathrm{B}^{+}_{1})}\leq 1,\,\, \|f\|_{ L^{p}(\mathrm{B}^{+}_{1})}\leq \eta \quad  \text{and}\quad  \|g\|_{C^{0,\alpha}(\overline{\mathrm{T}_{1}})}\leq 1,
$$
where $\eta>0$ is the constant from Approximation Lemma \ref{approx} when we set $\tilde{\alpha}=\frac{2p-n}{ p}$. In fact, if such conditions do not occur, we define the constant
\begin{eqnarray*}
\kappa \defeq \frac{\eta}{\eta\|u\|_{L^{\infty}(\mathrm{B}^{+}_{1})}+\|f\|_{L ^{p}(\mathrm{B}^{+}_{1})}+\eta \|g\|_{C^{0,\alpha}(\overline{\mathrm{T}_{1}})}}
\end{eqnarray*}
and we see that $\tilde{u}(x)=\kappa u(x)$ is a viscosity solution of
$$
\left\{
\begin{array}{rclcl}
\tilde{F}(D^{2}\tilde{u},x) & = & \tilde{f}(x) &\mbox{in}&   \mathrm{B}^{+}_{1} \\
\tilde{\beta}\cdot D\tilde{u}+\tilde{\gamma} \tilde{u} & = & \tilde{g}(x)  &\mbox{on}&  \mathrm{T}_{1},
\end{array}
\right.
$$
where
$$
\left\{
\begin{array}{rcl}
\tilde{F}(\mathrm{X},x) & \defeq & \kappa F\left(\frac{1}{\kappa} \mathrm{X}, x\right) \\
\tilde{f}(x) & \defeq & \kappa f(x)\\
\tilde{\beta}(x) & \defeq &  \beta(x)\\
\tilde{\gamma}(x) & \defeq & \gamma(x)\\
\tilde{g}(x)& \defeq & \kappa g(x),
\end{array}
\right.
$$
Thus, it is easy to check that
$$
\|\tilde{u}\|_{L^{\infty}(\mathrm{B}^{+}_{1})}\leq 1,\quad \|\tilde{f }\|_{L^{p}(\mathrm{B}^{+}_{1})}\leq \eta \quad  \text{and} \quad \|\tilde{g}\|_{C^{0,\alpha}(\overline{\mathrm{T}_{1}})}\leq 1
$$
and thus, assuming that the theorem is valid under these conditions, it follows that $\tilde{u}\in C^{0,\frac{ 2p-n}{p}}\left(\overline{\mathrm{B}^{+}_{\frac{1}{2}}}\right)$, with the following estimates
\begin{eqnarray}\label{estimate1theorem4.1.2}
\|\tilde{u}\|_{C^{0,\frac{2p-n}{p}}\left(\overline{\mathrm{B}^{+}_{\frac{1}{2}}}\right)} \leq \mathrm{C}\left(n,\lambda,\Lambda,\mu_{0},p,\alpha, \|\beta\|_{C^{0,\alpha}(\overline{\mathrm{T}_{1}})},\|\gamma\|_{C^{0,\alpha}(\overline{\mathrm{T}_{1}})}\right),
\end{eqnarray}
which clearly yields that $u\in C^{0,\frac{2p-n}{p}}\left(\overline{\mathrm{B}^{+}_{\frac{1}{2}}}\right)$, and re-scaling the scalar factor in the estimate \eqref{estimate1theorem4.1.2} implies the desired estimate in the theorem's thesis.

Thus we can, in fact, make the assumption stated above at the beginning of the proof. In this case, choose $\eta_{0}=\eta$. To prove the desired result, for a fixed  $y\in\mathrm{T}_{\frac{1}{2}}$, we assert that there exists a sequence of real numbers $(\mu_{k})_{ k\in\mathbb{N}}$ such that for all $k\in\mathbb{N}$,
\begin{eqnarray}\label{estimate2theorem4.1.2}
\sup_{z\in\mathrm{B}^{+}_{\rho^{k}}(y)}|u(z)-\mu_{k}|\leq \rho^{k\frac{ 2p-n}{p}},
\end{eqnarray}
where $\rho\in\left(0,\frac{1}{2}\right]$ is the radius of the semi-ball obtained in Proposition \ref{compI}. Moreover,  such sequence satisfies the following approximation rate
\begin{eqnarray}\label{estimate4theorem4.1.2}
|\mu_{k+1}-\mu_{k}|\leq \mathrm{C}\rho^{k\frac{2p-n}{p}} \quad \forall\,\, k \in \mathbb{N}.
\end{eqnarray}

It is worth a quick digression to note that because $\rho\in\left(0,\frac{1}{2}\right]$ and $y\in\mathrm{T}_{\frac{1}{2}}$, it follows that $\mathrm{T}_{\rho^{k}}(y)\subset \mathrm{T}_{1}$ and $\mathrm{B}^{+}_{\rho^{k}}(y)\subset \mathrm{B}^{+}_{1}\cup \mathrm{T}_{1}$ for all $k\in\mathbb{N}$. With such an observation in hand, we prove the statement by induction in $k\in \mathbb{N}$. Indeed, when $k=1$, we already have the existence of the constant $\mu_{1}$ guaranteed by Proposition \ref{compI} together with the estimate $\eqref{estimate2theorem4.1.2}$. Now, assuming, by the induction hypothesis, that the statement holds for $k\in\mathbb{N}$, let us define the auxiliary function
\begin{eqnarray*}
v_{k}(x) \defeq \frac{u(y+\rho^{k}x)-\mu_{k}}{\rho^{k}\frac{2p-n}{p}}, \  x\in\mathrm{B}^{+}_{1}\cup \mathrm{T}_{1}.
\end{eqnarray*}
Thus, we see that $v_{k}$ is a viscosity solution to
$$
	\left\{
	\begin{array}{rclcl}
		F_{k}(D^{2}v_{k},x) & = & f_{k}(x) &\mbox{in}& \mathrm{B}^{+}_{1} \\
		\beta_{k}(x)\cdot Dv_{k}(x)+\gamma_{k}(x) v_{k}(x) & = & g_{k}(x) &\mbox{on}& \mathrm{T}_{1},
	\end{array}
	\right.
$$
where
$$
\left\{
\begin{array}{rcl}
	F_{k}(\mathrm{X},x) & \defeq & \rho^{k\frac{n}{p}}F\left(\frac{1}{\rho^{k\frac{n }{p}}} \mathrm{X},y+\rho^{k}x\right) \\
	f_k(x) & \defeq & \rho^{k\frac{n}{p}} f(y+\rho^{k}x)\\
	\beta_{k}(x) & \defeq & \beta(y+\rho^{k}x)\\
	\gamma_{k}(x) & \defeq & \rho^{k}\gamma(y+\rho^{k}x)\\
	g_{k}(x)&\defeq & \rho^{k\left(-1+\frac{n}{p}\right)}(g(y+\rho^{k}x)-\mu_{ k}\gamma(y+\rho^{k}x)).
\end{array}
\right.
$$

In this context, we claim that $v_{k}$ falls into the hypotheses of Proposition \ref{compI}. In fact, by the induction hypothesis, it follows from the estimate $\eqref{estimate2theorem4.1.2}$ for all $k \ge 1$ that $\|v_{k}\|_{L^{\infty}(\mathrm{B}^{+}_{1})}\leq 1$. Furthermore, we clearly have, by $\beta,\gamma \in C^{0,\alpha}(\overline{\mathrm{T}_{1}})$ and $\rho\in\left(0,\frac{1}{2}\right)$, that
\begin{eqnarray*}
[\beta_{k}]_{0,\alpha,\mathrm{T}_{1}}& \defeq &\sup_{\stackrel{x,z\in \mathrm{T}_{1}}{ x\neq z}}\frac{|\beta_{k}(x)-\beta_{k}(z)|}{|x-z|^{\alpha}}=\rho^{k\alpha}\sup_ {\stackrel{x,z\in \mathrm{T}_{1}}{x\neq z}}\frac{|\beta(y+\rho^{k}x)-\beta(y+\rho^ {k}z)|}{|(y+\rho^{k}x)-(y+\rho^{k}z)|^{\alpha}}\\
&=&\rho^{k\alpha}[\beta]_{0,\alpha,\mathrm{T}_{\rho^{k}}(y)}\\
&\leq& [\beta]_{0, \alpha,\mathrm{T}_{1}}<\infty,
\end{eqnarray*}
hence $\beta_{k}\in C^{0,\alpha}(\overline{\mathrm{T}_{1}})$, and
\begin{eqnarray*}
[\gamma_{k}]_{0,\alpha,\mathrm{T}_{1}}& \defeq &\sup_{\stackrel{x,z\in \mathrm{T}_{1}}{ x\neq z}}\frac{|\gamma_{k}(x)-\gamma_{k}(z)|}{|x-z|^{\alpha}}=\rho^{k(1+\alpha )}\sup_{\stackrel{x,z\in \mathrm{T}_{1}}{x\neq z}}\frac{|\gamma(y+\rho^{k}x)-\gamma( y+\rho^{k}z)|}{|(y+\rho^{k}x)-(y+\rho^{k}z)|^{\alpha}}\\
&=&\rho^{k(1+\alpha)}[\gamma]_{0,\alpha,\mathrm{T}_{\rho^{k}}(y)}\\
&\leq& [\gamma] _{0,\alpha,\mathrm{T}_{1}}<\infty,
\end{eqnarray*}
also guaranteeing the $\alpha$-H\"{o}lder regularity of $\gamma_{k}$ on $\overline{\mathrm{T}_{1}}$.

Additionally, we also see that $g_{k}\in C^{0,\alpha}(\overline{\mathrm{T}_{1}})$. In fact,
\begin{eqnarray*}
[g_{k}]_{0,\alpha,\mathrm{T}_{1}}&\leq& \rho^{k\left(-1+\frac{n}{p}\right)}( \rho^{k\alpha}[g]_{0,\alpha,\mathrm{T}_{\rho^{k}}(y)}+|\mu_{k}|\rho ^{k\alpha}[\gamma]_{0,\alpha,\mathrm{T}_{\rho^{k}}(y)})\\
&\leq&[g]_{0,\alpha,\mathrm{T}_{1}}+|\mu_{k}|[\gamma]_{0,\alpha,\mathrm{T}_{1}}<\infty,
\end{eqnarray*}
since $\rho\in\left(0,\frac{1}{2}\right]$, $0<\frac{n}{p}-1\leq 1$ (remember that $p\in[n-\varepsilon_{0},n)$), and by continuity of $u$ and $v_{k}$, it follows (by induction hypothesis) that
\begin{eqnarray*}
\|u-\mu_{ k}\|_{L^{\infty}(\overline{\mathrm{B}^{+}_{\rho^{k}} (y)})}\leq \rho^{k\frac{2p-n}{p}} \quad \mbox{and} \quad  \|u\|_{L^{\infty}(\mathrm{B}_{1}^{+}\cup \mathrm{T}_{1})}\leq 1
\end{eqnarray*}
which implies $|\mu_{ k}|\leq \frac{3}{2}$. Furthermore, by the definition of $f_{k}$, we have that
\begin{eqnarray*}
\int_{\mathrm{B}^{+}_{1}}|f_{k}(x)|^{p}dx=\rho^{kn}\int_{\mathrm{B}^{+}_{1}}|f(y+\rho^{k}x)|^{p}dx=\int_{\mathrm{B}^{+}_{\rho^{k}}(y)}|f(z)|^{p}dz\leq\int_{\mathrm{B}^{+}_{1}}|f(z)|^{p}dz\leq \eta^{p}.
\end{eqnarray*}

Finally, note that, by the definition of $F_{k}$, it is clearly a $(\lambda,\Lambda)$-elliptic operator, and by the hypothesis \eqref{hypothesis1theorem4.1.2}, it follows that
\begin{eqnarray*}
\intav{\mathrm{B}^{+}_{1}}|\Phi_{F_{k}}(x)|^{n}dx=\intav{\mathrm{B}^{+}_{1}}|\Phi_{F}(y+\rho^{k}x,y)|^{n}dx=\intav{\mathrm{B}^{+}_{\rho^{k}}( y)}|\Phi_{F}(z,y)|^{n}dz\leq \eta_{0}^{n}=\eta^{n}.
\end{eqnarray*}

Therefore, we can invoke Proposition \ref{compI} for $v_{k}$ and obtain a constant $\tilde{\mu}$ such that $|\tilde{\mu}|\leq \mathrm{C}$, and
\begin{eqnarray}\label{estimate3theorem4.1.2}
\sup_{\mathrm{B}^{+}_{\rho}}|v_{k}-\tilde{\mu}|\leq \rho^{\frac{2p-n}{p}}.
\end{eqnarray}

Now, we define $\mu_{k+1}=\mu_{k}+\rho^{k\frac{2p-n}{p}}\tilde{\mu}$. Thus, by the definition of $v_{k}$ together with \eqref{estimate2theorem4.1.2} and \eqref{estimate3theorem4.1.2}, we have
\begin{eqnarray*}
\sup_{\mathrm{B}^{+}_{\rho^{k+1}}(y)}|u-\mu_{k+1}|\leq \rho^{(k+1)\frac{2p-n}{p}},
\end{eqnarray*}
which proves the $(k+1)^{\text{th}}$ step of induction.

Thus, by induction, the assertion of the existence of the sequence $(\mu_{k})_{k\in\mathbb{N}}$ in $\mathbb{R}$ satisfying \eqref{estimate2theorem4.1.2} follows. From the estimate \eqref{estimate4theorem4.1.2}, we can conclude that $(\mu_{k})_{k\in\mathbb{N}}$ is a Cauchy sequence in $\mathbb{R}$, and therefore there exists a real number $\displaystyle \mu_{\infty} =\lim_{k \to \infty}\mu_{k}$. We claim that $\mu_{\infty}=u(y)$. Indeed, we fix $x_{0}\in\mathrm{B}^{+}{1}$ and define for each $k\in\mathbb{N}$, $z_{k}=y+\rho^ {k}x_{0}$. We clearly see that $z_{k}\in\mathrm{B}^{+}{\rho^{k}}(y)$ for all $k\in\mathbb{N}$ and that $z_{k}\to y$ when $k\to\infty$, since,
\begin{eqnarray*}
|z_{k}-y|=\rho^{k}|x_{0}|<\rho^{k}\to 0, \ \mbox{when} \ k\to \infty.
\end{eqnarray*}

Hence, by the continuity of the function $u$, we also have $u(z_{k})\to u(y)$. Thus, using such convergences, \eqref{estimate3theorem4.1.2}, and \eqref{estimate4theorem4.1.2}, we have
\begin{eqnarray*}
|u(y)-\mu_{\infty}|&\leq& |u(z_{k})-u(y)|+|u(z_{k})-\mu_{k}|+|\mu_ {k}-\mu_{\infty}|\\
&\stackrel{z_{k}\in\mathrm{B}^{+}_{\rho^{k}}(y)}{\leq}&|u(z_{k})-u(y) |+\sup_{\mathrm{B}^{+}_{\rho^{k}}(y)}|u-\mu_{k}|+|\mu_{k}-\mu_{\infty} |\\
&\leq&|u(z_{k})-u(y)|+\rho^{k\frac{2p-n}{p}}+|\mu_{k}-\mu_{\infty}|\to 0,\ \mbox{when} \ k\to\infty.
\end{eqnarray*}

Thus, $\mu_{\infty}=u(y)$. On the other hand, given any natural $k<m$, we have, by the condition \eqref{estimate4theorem4.1.2}, that
\begin{eqnarray*}
|\mu_{k}-\mu_{m}|\leq \sum_{j=k}^{m-1}|\mu_{j+1}-\mu_{j}|\leq \mathrm{C} \sum_{j=k}^{m-1}\rho^{j\frac{2p-n}{p}}=\mathrm{C}\frac{\rho^{k\frac{2p-n} {p}}\left(\rho^{(m-k)\frac{2p-n}{p}}-1\right)}{\rho^{\frac{2p-n}{p}}-1}.
\end{eqnarray*}
Moreover, fixed $k\in\mathbb{N}$ as above, and letting $m\to \infty$, we obtain, by the convergence of $\mu_{m}\to u(y)$ the following
\begin{eqnarray}\label{estimate5theorem4.1.2}
|u(y)-\mu_{k}|\leq \frac{\mathrm{C}}{1-\rho^{\frac{2p-n}{p}}}\rho^{k\frac{ 2p-n}{p}}.
\end{eqnarray}

Finally, we fix $0<r<\rho$, and choose $k\in\mathbb{N}$, such that $\rho^{k+1}<r\leq \rho^{k}$. From \eqref{estimate2theorem4.1.2} and \eqref{estimate5theorem4.1.2}, we have that
\begin{eqnarray}\label{estimate6theorem4.1.2}
\sup_{x\in\mathrm{B}^{+}_{r}(y)}|u(x)-u(y)|&\stackrel{r\leq\rho^{k}}{\leq}& \sup_{x\in \mathrm{B}^{+}_{\rho^{k}}(y)}|u(x)-\mu_{k}|+ |\mu_{k} -u(y)|\\
&\leq& \rho^{k\frac{2p-n}{p}}+\frac{\mathrm{C}}{1-\rho^{\frac{2p-n}{p }}}\rho^{k\frac{2p-n}{p}}\nonumber\\
&=&\frac{1}{\rho}\left(1+\frac{\mathrm{C}}{1-\rho^{\frac{2p-n}{p}}}\right)\rho ^{(k+1)\frac{2p-n}{p}}\\
&\stackrel{\rho^{k+1}<r}{\leq}&\mathrm{C}r^{\frac{2p-n }{p}}.
\end{eqnarray}

Now, we prove that indeed $u\in C^{0,\frac{2p-n}{p}}\left(\overline{\mathrm{B}^{+}_{\frac{1}{2}}}\right )$. For this purpose, given $x\in\mathrm{B}^{+}_{\frac{1}{2}}$ and $y\in\mathrm{T}_{\frac{1}{2}}$, we have two possible cases to analyze:

\begin{enumerate}
\item[\checkmark] {\bf Case 1:} $r=|x-y|\geq\rho$: In this case, it immediately follows that
\begin{eqnarray*}
\frac{|u(x)-u(y)|}{|x-y|^{\frac{2p-n}{p}}}\leq \frac{\overbrace{2\|u\|_{L^{\infty}(\overline{\mathrm{B}^{+}_{1/2}})}}^{\leq 2}}{\rho^{\frac{2p-n}{p}}}\leq \mathrm{C}.
\end{eqnarray*}
\item[\checkmark] {\bf Case 2:} $r=|x-y|< \rho$: Note that the estimate $\eqref{estimate6theorem4.1.2}$ also holds in $\overline{\mathrm{B}^{+}_{r}(y)}$, and hence, as $x\in\overline{\mathrm{B}^{+}_ {r}(y)}$, it follows from such an estimate that
\begin{eqnarray*}
\frac{|u(x)-u(y)|}{|x-y|^{\frac{2p-n}{p}}}\leq\frac{\mathrm{C} r^{\frac{2p-n}{p}}}{|x-y|^{\frac{2p-n}{p}}}=\mathrm{C}.
\end{eqnarray*}
\end{enumerate}

Finally, from these cases above and the optimal H\"{o}lder interior estimates (cf. \cite[Remark 2]{ET}), we obtain that $[u]_{0,\frac{2p-n}{p},\mathrm{B}^{+}_{\frac{1}{2}}}<\infty$. Moreover, we obtain the following estimate
\begin{eqnarray*}
\|u\|_{C^{0,\frac{2p-n}{p}}\left(\overline{\mathrm{B}^{+}_{\frac{1}{2}}}\right)} \leq \mathrm{C}(\|u\|_{L^{\infty}(\mathrm{B}^{+}_{1})}+\|f\|_{L^{p} (\mathrm{B}^{+}_{1})}+\|g\|_{C^{0,\alpha}(\overline{\mathrm{T}_{1}})}),
\end{eqnarray*}
which concludes the proof of this Theorem.
\end{proof}

An application of Theorem \ref{holderoptimal} is presented in the following result.

\begin{corollary}\label{Corollary3.3}
Consider $u$ to be a viscosity solution of $\eqref{1}$, where $\beta,\gamma,g\in C^{0,\alpha}(\overline{\mathrm{T}_{1}})$ for some $\alpha\in(0,1)$, and suppose that $f\in L^{p}(\mathrm{B}^{+}_{1})\cap C^{0}(\mathrm{B}^{+}_{1})$ for
$$p \defeq \max\left\{\frac{n}{2-\alpha},n-\varepsilon_{0}\right\}.$$ Then, there exists constant a $\eta_{0}>0$ which depends only on $n$, $\Lambda$, $\mu_{0}$, $\alpha$, $\|\beta \|_{C^{0,\alpha}(\overline{\mathrm{T}_{1}})}$ and $\|\gamma\|_{C^{0,\alpha}(\overline {\mathrm{T}_{1}})}$ such that if,
\begin{eqnarray*}
\intav{\mathrm{B}^{+}_{r}}|\Phi_{F}(x, y)|^{n}dx\leq \eta_{0}^{n}, \ \forall y\in \mathrm{B}^{+}_{\frac{1}{2}}, \forall r\in \left(0,\frac{1}{2}\right),
\end{eqnarray*}
then $u\in C^{0,2-\frac{n}{p(n, \alpha, \varepsilon_0)}}\left(\overline{\mathrm{B}^{+}_{\frac{1}{2}}}\right)$ and the following estimate holds
\begin{eqnarray*}
\|u\|_{C^{0,2-\frac{n}{p(n, \alpha, \varepsilon_0)}}\left(\overline{\mathrm{B}^{+}_{\frac{1}{2}}}\right)} \leq \mathrm{C}\left(\|u\|_{L^{\infty}(\mathrm{B}^{+}_{1})}+\|f\|_{L^{p} (\mathrm{B}^{+}_{1})}+\|g\|_{C^{0,\alpha}(\overline{\mathrm{T}_{1}})}\right),
\end{eqnarray*}
where $\mathrm{C}=\mathrm{C}\left(n,\lambda,\Lambda,\mu_{0},\alpha,\|\beta\|_{C^{0,\alpha}( \overline{\mathrm{T}_{1}})},\|\gamma\|_{C^{0,\alpha}(\overline{\mathrm{T}_{1}})}\right)>0$, and $\varepsilon_{0}\in\left(0,\frac{n}{2}\right)$ is Escauriaza's constant.
\end{corollary}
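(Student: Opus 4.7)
The plan is to show that Corollary \ref{Corollary3.3} is essentially a direct application of Theorem \ref{holderoptimal} once one verifies that the prescribed exponent $p = \max\{n/(2-\alpha), n - \varepsilon_0\}$ falls inside the admissible range $[n-\varepsilon_0, n)$ required by the theorem. Recall that Theorem \ref{holderoptimal} produces H\"older regularity with exponent $(2p-n)/p = 2 - n/p$, so matching the exponent $2 - n/p(n,\alpha,\varepsilon_0)$ claimed in the corollary will be automatic.

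First I would verify the range hypothesis. Since $\alpha \in (0,1)$, one has $2-\alpha \in (1,2)$, hence $n/(2-\alpha) \in (n/2, n)$. By the choice of Escauriaza's constant $\varepsilon_0 \in (0, n/2)$, one also has $n - \varepsilon_0 \in (n/2, n)$. Therefore
\[
p = \max\left\{\frac{n}{2-\alpha},\, n - \varepsilon_0\right\} \in [n - \varepsilon_0,\, n),
\]
placing $p$ exactly in the integrability window covered by Theorem \ref{holderoptimal}. In particular, the smallness assumption on the oscillation $\Phi_F$ is literally the hypothesis of the corollary, so no extra work is required there.

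Next, I would invoke Theorem \ref{holderoptimal} directly with this value of $p$. This yields $u \in C^{0,(2p-n)/p}(\overline{\mathrm{B}^+_{1/2}})$ together with the norm bound
\[
\|u\|_{C^{0,(2p-n)/p}(\overline{\mathrm{B}^+_{1/2}})} \leq \mathrm{C}\bigl(\|u\|_{L^\infty(\mathrm{B}^+_1)} + \|f\|_{L^p(\mathrm{B}^+_1)} + \|g\|_{C^{0,\alpha}(\overline{\mathrm{T}_1})}\bigr),
\]
with $\mathrm{C}$ depending only on the quantities listed in the corollary. Since $(2p-n)/p = 2 - n/p$, this is precisely the conclusion we want.

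There is essentially no obstacle here: the whole content is a bookkeeping exercise that picks the optimal $p$ in the scale $[n-\varepsilon_0, n)$ compatible with the H\"older exponent $\alpha$ on the boundary data. The only subtle point worth emphasizing is why the max is the natural choice: taking $p = n/(2-\alpha)$ yields exponent exactly $\alpha$, which is the ceiling imposed by the regularity of $\beta,\gamma,g$; however, if $\alpha$ is so small that $n/(2-\alpha)$ falls below $n-\varepsilon_0$, Theorem \ref{holderoptimal} is not applicable at that exponent, and one must use $p = n-\varepsilon_0$, giving the (sharp under present integrability) exponent $(n-2\varepsilon_0)/(n-\varepsilon_0)$. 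Taking the max produces the best $p$ available in both regimes simultaneously, which is exactly what the statement encodes.
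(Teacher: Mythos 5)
Your proposal is correct and follows essentially the same route as the paper: both verify that $p = \max\{\tfrac{n}{2-\alpha}, n-\varepsilon_0\} \in [n-\varepsilon_0, n)$ (your explicit check that $\tfrac{n}{2-\alpha} \in (\tfrac{n}{2}, n)$ and $n-\varepsilon_0 \in (\tfrac{n}{2}, n)$ is a welcome detail the paper leaves implicit) and then invoke Theorem \ref{holderoptimal} directly, noting $\tfrac{2p-n}{p} = 2 - \tfrac{n}{p}$. Your closing remark on why the maximum is the natural choice mirrors the paper's subsequent Remark on the limiting values of $\alpha_p$, so no further changes are needed.
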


\begin{proof}
In effect, by definition of parameter $p$ it follows that $p\in [n-\varepsilon_{0},n)$. Thus, we can apply Theorem \ref{holderoptimal} to guarantee the existence of a universal constant $\eta_{0}>0$ such that $u\in C^{0,2-\frac{n}{p(n, \alpha, \varepsilon_0)}}\left(\overline{\mathrm{B}^{+}_{\frac{1}{2}}}\right)$ with the following estimate
$$
\|u\|_{C^{0,2-\frac{n}{p(n, \alpha, \varepsilon_0)}}\left(\overline{\mathrm{B}^{+}_{\frac{1}{2}}}\right)}\leq \mathrm{C}(\verb"universal")\left(\|u\|_{L^{\infty}(\mathrm{B}^{+}_{1})}+\|f\|_{L^{p}(\mathrm{B}^{+}_{1})}+\|g\|_{C^{0,\alpha}(\overline{\mathrm{T}_{1}})}\right),
$$
thereby concluding the proof.
\end{proof}

\begin{remark} We must highlight that in the Corollary \ref{Corollary3.3}, we have
$$
\alpha_p \defeq 2-\frac{n}{p(n, \alpha, \varepsilon_0)} \to 1 \quad \text{as} \quad \alpha \to 1^{-} \quad \text{and} \quad \alpha_p \defeq 2-\frac{n}{p(n, \alpha, \varepsilon_0)} \to \frac{n-2\varepsilon_0}{n-\varepsilon_0} \quad \text{as} \quad \alpha \to 0^{+}.
$$
\end{remark}

\section{Log-Lipschitz regularity estimates}

This section is devoted to address borderline estimates for solutions of \eqref{1} under suitable assumptions on the problem's data and $L^n$-integrability on source term. In this context, the next result provides the first step of an affine approximation scheme, which will establish the desired Log-Lipschitz estimate.

\begin{proposition}
\label{compII}
Let $u$ be a solution of \eqref{1}, where $\beta$, $\gamma$, $g\in C^{0,\alpha}(\overline{\mathrm{T}_{1}} )$ to some $\alpha\in(0,1)$. There are $\eta>0$ and $\rho\in\left(0,\frac{1}{2}\right)$ depending only on $n$, $p$, $\lambda$, $\Lambda$ , $\mu_{0}$, $\alpha$, $\|\beta\|_{C^{0,\alpha}(\overline{\mathrm{T}_{1}})}$, $ \|\gamma\|_{C^{0,\alpha}(\overline{\mathrm{T}_{1}})}$ and $\|g\|_{C^{0,\alpha} (\overline{\mathrm{T}_{1}})}$ such that, if
$$
\max\left\{\intav{\mathrm{B}^{+}_{1}} |\Phi_{F}(x)|^{n}dx, \,\, \int_{\mathrm{B}^{+}_{1}}|f(x)|^{n}dx\right\}\leq \eta^{n},
$$
then there exists an affine function $\mathfrak{l}(x)=\mathbf{a}+\mathbf{b}\cdot x$, with universally bounded coefficients in the following sense
$$
|\mathbf{a}|+\|\mathbf{b}\|\leq \mathrm{C}(n,\lambda,\Lambda,\mu_{0},\alpha,\|\beta\|_{C^{0,\alpha}(\overline{\mathrm{T}_{1}})},\|\gamma\|_{C^{0,\alpha}(\overline{\mathrm{T}_{1}})},\|g\|_{C^{0,\alpha}(\overline{\mathrm{T}_{1}})}),
$$
such that
$$
\sup_{\mathrm{B}^{+}_{\rho}}|u-\mathfrak{l}|\leq \rho.
$$
\end{proposition}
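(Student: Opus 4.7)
The plan is to mirror the affine-approximation blueprint of Proposition \ref{compI}, but upgrade from a constant to a linear Taylor polynomial by exploiting the boundary $C^{1,\alpha}$ regularity of the frozen-coefficient limiting profile due to Li--Zhang \cite[Theorem 1.2]{LiZhang}. I would also assume throughout the normalization $\|u\|_{L^{\infty}(\mathrm{B}^+_1)} \le 1$, which can always be imposed by the standard rescaling performed at the opening of the proof of Theorem \ref{holderoptimal}.

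First, for a $\delta > 0$ to be fixed \emph{a posteriori}, I would apply the Approximation Lemma \ref{approx} with $p = n$ to obtain $\mathfrak{h}$, the viscosity solution of
$$
\left\{
\begin{array}{rclcl}
F(D^2 \mathfrak{h}, 0) & = & 0 & \mbox{in} & \mathrm{B}^+_{7/8} \\
\beta \cdot D\mathfrak{h} + \gamma\, \mathfrak{h} & = & g & \mbox{on} & \mathrm{T}_{7/8}\\
\mathfrak{h} & = & u & \mbox{on} & \partial \mathrm{B}^+_{7/8} \setminus \mathrm{T}_{7/8},
\end{array}
\right.
$$
together with $\sup_{\mathrm{B}^+_{7/8}} |u - \mathfrak{h}| \le \delta$; the $L^n$-smallness of $f$ and of the oscillation $\Phi_F$ in the statement is exactly what Lemma \ref{approx} demands in the case $p=n$. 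Then I would invoke \cite[Theorem 1.2]{LiZhang} to secure
$$
\|\mathfrak{h}\|_{C^{1,\alpha}(\overline{\mathrm{B}^+_{2/3}})} \le \tilde{\mathrm{C}},
$$
for a constant $\tilde{\mathrm{C}}$ depending only on the quantities listed in the proposition's statement. Setting
$$
\mathfrak{l}(x) \defeq \mathfrak{h}(0) + D\mathfrak{h}(0) \cdot x,
$$
the coefficients $\mathbf{a} = \mathfrak{h}(0)$ and $\mathbf{b} = D\mathfrak{h}(0)$ are automatically controlled by $\tilde{\mathrm{C}}$, matching the universal bound on $|\mathbf{a}| + \|\mathbf{b}\|$ required by the statement.

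The closing step is the first-order Taylor bound: for every $\rho \in (0, 1/2)$,
$$
\sup_{\mathrm{B}^+_\rho} |\mathfrak{h} - \mathfrak{l}| \le \tilde{\mathrm{C}}\, \rho^{1+\alpha},
$$
which combined with the approximation error gives $\sup_{\mathrm{B}^+_\rho} |u - \mathfrak{l}| \le \delta + \tilde{\mathrm{C}}\, \rho^{1+\alpha}$. I would then pick
$$
\rho \defeq \min\left\{\tfrac{1}{2},\,(2\tilde{\mathrm{C}})^{-1/\alpha}\right\} \quad \text{and} \quad \delta \defeq \tfrac{\rho}{2},
$$
so that each summand is at most $\rho/2$, yielding $\sup_{\mathrm{B}^+_\rho} |u - \mathfrak{l}| \le \rho$; the value of $\delta$ then pins down $\eta$ through Lemma \ref{approx}. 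The main subtlety to watch is ensuring that the $C^{1,\alpha}$ constant for $\mathfrak{h}$ depends only on the prescribed H\"older norms of $\beta, \gamma, g$ and not on $u$ itself; the preliminary normalization handles this, after which the whole argument is a routine compactness/Taylor-expansion assembly.
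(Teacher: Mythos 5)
Your argument coincides with the paper's own proof: the same application of the Approximation Lemma \ref{approx}, the same appeal to the Li--Zhang $C^{1,\alpha}$ estimate for the frozen-coefficient profile $\mathfrak{h}$, the first-order Taylor expansion $\mathfrak{l}(x)=\mathfrak{h}(0)+D\mathfrak{h}(0)\cdot x$, and essentially the same universal choices of $\rho$ and $\delta$. The only cosmetic difference is that the paper caps $\rho$ at $1/e$ rather than $1/2$ (a choice exploited later in the Log-Lipschitz iteration of Theorem \ref{theorem4.2}, and which also keeps $\rho$ strictly inside $(0,1/2)$ as the statement requires), so your proposal is correct and follows the same route.
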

\begin{proof}
	Let $\delta>0$ be a constant, which we will determine \textit{a posteriori}. Thus, by \eqref{approx} we can consider $\mathfrak{h}$ viscosity solution for
	\begin{eqnarray*}
		\left\{
		\begin{array}{rclcl}
			F(D^{2}\mathfrak{h},0)& = & 0 &\mbox{in}& \mathrm{B}^{+}_{\frac{7}{8}} \\
			\beta\cdot D \mathfrak{h}+\gamma \mathfrak{h} & = & g(x) &\mbox{on}& \mathrm{T}_{\frac{7}{8}}\\
			\mathfrak{h} & = & u &\mbox{on}& \partial \mathrm{B}^{+}_{\frac{7}{8}}\setminus \mathrm{T}_{\frac{7}{8}},
		\end{array}
		\right.
	\end{eqnarray*}
	such that
	\begin{eqnarray}\label{estimate1lemma4.2.1}
		\sup_{\mathrm{B}^{+}_{\frac{7}{8}}}|u-\mathfrak{h}|\leq \delta.
	\end{eqnarray}
Hence, by \cite[Theorem1.2]{LiZhang}, it follows that $\mathfrak{h}\in C^{1,\alpha}(\overline{\mathrm{B}^{+}_{\frac{2}{3}}})$ and
\begin{eqnarray}\label{estimate2lemma4.2.1}
\|\mathfrak{h}\|_{C^{1,\alpha}\left(\overline{\mathrm{B}^{+}_{\frac{2}{3}}}\right)}\leq\tilde {\mathrm{C}}=\tilde{\mathrm{C}}\left(n,\lambda,\Lambda,\mu_{0},\alpha,\|\beta\|_{C^{0,\alpha }(\overline{\mathrm{T}_{1}})},\|\gamma\|_{C^{0,\alpha}(\overline{\mathrm{T}_{1}})} ,\|g\|_{C^{0,\alpha}(\overline{\mathrm{T}_{1}})}\right).
\end{eqnarray}
Now, we define $\mathbf{a}=\mathfrak{h}(0)$, $\mathbf{b}=D\mathfrak{h}(0)$ and $\mathfrak{l}(x) = \mathbf{a}+\mathbf{b}\cdot x$. Thus, by \eqref{estimate2lemma4.2.1}, we have
\begin{eqnarray}\label{estimate3lemma4.2.1}
\sup_{\mathrm{B}^{+}_{r}}|\mathfrak{h}-\mathfrak{l}|\leq \tilde{\mathrm{C}}r^{1+\alpha}, \,\,\,\forall r\in \left(0, \frac{2}{3}\right).
\end{eqnarray}

Finally, we can choose $\rho$ and $\delta$ by setting
\begin{equation}\label{EqRadius_2}
\rho= \min\left\{\left(\frac{1}{2\tilde{\mathrm{C}}}\right)^{\frac{1}{\alpha}}, \frac{1}{e}\right\} \qquad \mbox{and} \qquad \delta= \frac{1}{2}\rho.
\end{equation}
Thus, note that such choices determine the constant $\eta>0$ due to Lemma \ref{approx}. The universal bound of the constants $\mathbf{a}$ and $\mathbf{b}$ follows directly from \eqref{estimate2lemma4.2.1}. For the remainder, from \eqref{estimate1lemma4.2.1}, \eqref{estimate3lemma4.2.1}, and \eqref{EqRadius_2}, we obtain that
\begin{eqnarray*}
\sup_{\mathrm{B}^{+}_{\rho}}|u-\mathfrak{l}|&\leq& \sup_{\mathrm{B}^{+}_{\rho}}|u-\mathfrak{h}|+\sup_{\mathrm{B}^{+}_{\rho}}|\mathfrak{h}-\mathfrak{l}|\leq \sup_{\mathrm{B}^{+}_{\frac{7}{8}}}|u-\mathfrak{h}|+\sup_{ \mathrm{B}^{+}_{\rho}}|\mathfrak{h}-\mathfrak{l}|\leq \delta+\tilde{\mathrm{C}}\rho^{1+\alpha}\\
&\leq&\frac{1}{2}\rho+\left(\frac{1}{2\rho^{\alpha}}\right)\rho^{1+\alpha}=\frac{1}{ 2}\rho+\frac{1}{2}\rho=\rho.
\end{eqnarray*}
\end{proof}

\begin{theorem}\label{theorem4.2}
Let $u$ be a viscosity solution of \eqref{1}, where $\beta,\gamma,g\in C^{0,\alpha}(\overline{\mathrm{T}_{1}})$ (for $\alpha\in(0,1)$) and $f\in L^{n}(\mathrm{B}^{+}_{1})\cap C^{0 }(\mathrm{B}^{+}_{1})$. There exists a constant $\eta_{0}>0$, which depends only on $n$, $p$, $\lambda$, $\Lambda$, $\mu_{0}$, $\alpha$, $\|\beta \|_{C^{0,\alpha}(\overline{\mathrm{T}_{1}})}$ and $\|\gamma\|_{C^{0,\alpha}(\overline {\mathrm{T}_{1}})}$ such that if,
\begin{eqnarray}\label{hypothesis1theorem4.2.5}
\intav{\mathrm{B}^{+}_{r}}|\Phi_{F}(x, y)|^{n}dx\leq \eta_{0}^{n}, \ \forall y\in \mathrm{B}^{+}_{\frac{1}{2}}, \forall r\in \left(0,\frac{1}{2}\right),
\end{eqnarray}
then $u\in C^{0, \text{Log-Lip}}\left(\overline{\mathrm{B}^{+}_{\frac{1}{2}}}\right)$ and the following estimate holds
\begin{eqnarray*}
\displaystyle \sup_{x,y\in \overline{\mathrm{B}^{+}_{\frac{1}{2}}} \atop{x\neq y}} \frac{|u(x)-u(y)|}{|x-y|\ln(|x-y|^{-1})}\leq \mathrm{C}\left(\|u\|_{L^{\infty}(\mathrm{B}^{+}_{1})}+ \|f\|_{L^{n}(\mathrm{B}^{+}_{1})}+\|g\|_{C^{0,\alpha}(\overline{\mathrm {T}_1})}\right),
\end{eqnarray*}
where $\mathrm{C}>0$ depends only on $n$, $\lambda$, $\Lambda$, $\mu_{0}$, $\|\beta\|_{C^{0,\alpha}(\overline{\mathrm{T}_{1}} )}$ and $\|\gamma\|_{C^{0,\alpha}(\overline{\mathrm{T}_1})}$.
\end{theorem}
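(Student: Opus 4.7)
Following the template of Theorem \ref{holderoptimal}, my plan is to iterate Proposition \ref{compII} at each boundary point to obtain a sequence of affine approximations of $u$ with geometric error $\rho^k$, and then read off the Log-Lipschitz modulus from the logarithmic growth of the slopes.

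After the usual preliminary scaling, I may assume $\|u\|_{L^\infty(\mathrm{B}^+_1)}\le 1$, $\|f\|_{L^n(\mathrm{B}^+_1)}\le \eta$ and $\|g\|_{C^{0,\alpha}(\overline{\mathrm{T}_1})}\le 1$, where $(\eta,\rho)$ are the universal constants from Proposition \ref{compII}. Fix $y\in \mathrm{T}_{1/2}$ and construct inductively affine functions $\mathfrak{l}_k(z)=\mathbf{a}_k+\mathbf{b}_k\cdot(z-y)$ satisfying
\begin{equation*}
\sup_{\mathrm{B}^+_{\rho^k}(y)}|u-\mathfrak{l}_k|\le \rho^k,\qquad |\mathbf{a}_{k+1}-\mathbf{a}_k|\le \mathrm{C}\rho^k,\qquad |\mathbf{b}_{k+1}-\mathbf{b}_k|\le \mathrm{C},
\end{equation*}
together with the compatibility relation $\beta(y)\cdot\mathbf{b}_k+\gamma(y)\mathbf{a}_k=g(y)$. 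At each step one rescales
\begin{equation*}
v_k(x) \defeq \frac{u(y+\rho^k x)-\mathfrak{l}_k(y+\rho^k x)}{\rho^k},
\end{equation*}
which solves the analogue of \eqref{1} with $F_k(\mathrm{X},x)=\rho^k F(\rho^{-k}\mathrm{X},y+\rho^k x)$, $f_k(x)=\rho^k f(y+\rho^k x)$, $\beta_k(x)=\beta(y+\rho^k x)$, $\gamma_k(x)=\rho^k\gamma(y+\rho^k x)$ and
\begin{equation*}
g_k(x)=g(y+\rho^k x)-\beta(y+\rho^k x)\cdot\mathbf{b}_k-\gamma(y+\rho^k x)\bigl[\mathbf{a}_k+\rho^k\mathbf{b}_k\cdot x\bigr].
\end{equation*}
The $L^n$-integrability of $f$ is scale-critical, yielding $\|f_k\|_{L^n(\mathrm{B}^+_1)}\le \eta$, and the small-oscillation hypothesis \eqref{hypothesis1theorem4.2.5} transfers verbatim to $F_k$; so the sole delicate verification is that $v_k$ meets the hypotheses of Proposition \ref{compII} with \emph{constants independent of $k$}.

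The expected obstruction is that the slopes $\mathbf{b}_k$ are not Cauchy but only grow logarithmically, so the term $-\beta(y+\rho^k x)\cdot\mathbf{b}_k$ in $g_k$ threatens to blow up in $L^\infty$. The argument is rescued by the observation that the affine corrector supplied by Proposition \ref{compII} is the one-jet at the touching point of the auxiliary solution $\mathfrak{h}$ of the oblique problem, so it inherits the boundary identity $\beta(y)\cdot\mathbf{b}_k+\gamma(y)\mathbf{a}_k=g(y)$; this identity is preserved along the iteration and forces $g_k(0)=0$. Rewriting
\begin{equation*}
g_k(x)=[g(y+\rho^k x)-g(y)]-[\beta(y+\rho^k x)-\beta(y)]\cdot\mathbf{b}_k-[\gamma(y+\rho^k x)-\gamma(y)][\mathbf{a}_k+\rho^k\mathbf{b}_k\cdot x]-\gamma(y)\rho^k\mathbf{b}_k\cdot x,
\end{equation*}
each term carries either a factor $\rho^{k\alpha}$ or $\rho^k$, which dominates the growth $|\mathbf{b}_k|\le \mathrm{C}k$ and yields $\|g_k\|_{C^{0,\alpha}(\overline{\mathrm{T}_1})}\le \mathrm{C}_2$ uniformly in $k$. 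Proposition \ref{compII} is then applicable, producing a universally bounded correction $(\tilde{\mathbf{a}}_k,\tilde{\mathbf{b}}_k)$; setting $\mathbf{a}_{k+1}=\mathbf{a}_k+\rho^k\tilde{\mathbf{a}}_k$, $\mathbf{b}_{k+1}=\mathbf{b}_k+\tilde{\mathbf{b}}_k$ closes the induction and propagates the compatibility.

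From the induction, $\mathbf{a}_k$ is Cauchy with $\mathbf{a}_k\to u(y)$ and $|\mathbf{a}_k-u(y)|\lesssim\rho^k$, while $|\mathbf{b}_k|\le \mathrm{C}k$. For any $x$ with $\rho^{k+1}<|x-y|\le \rho^k$,
\begin{equation*}
|u(x)-u(y)|\le |u(x)-\mathfrak{l}_k(x)|+|\mathbf{b}_k|\,|x-y|+|\mathbf{a}_k-u(y)|\lesssim \rho^k+\mathrm{C}k|x-y|\lesssim |x-y|\ln(|x-y|^{-1}),
\end{equation*}
proving the modulus at every boundary point $y\in \mathrm{T}_{1/2}$. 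Combining this boundary Log-Lipschitz bound with the interior Log-Lipschitz estimate of \cite{ET} via the standard boundary-projection plus triangle-inequality argument, and restoring the initial normalization, delivers the global estimate on $\overline{\mathrm{B}^+_{1/2}}$ asserted by the theorem.
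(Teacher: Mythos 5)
Your proposal follows the paper's proof in all essentials: the same normalization, the same iteration of Proposition \ref{compII} at a fixed $y\in\mathrm{T}_{1/2}$ producing affine functions with error $\rho^{k}$, increments $|\mathbf{a}_{k+1}-\mathbf{a}_{k}|\le \mathrm{C}\rho^{k}$, $\|\mathbf{b}_{k+1}-\mathbf{b}_{k}\|\le \mathrm{C}$, the same rescaled data $F_k,f_k,\beta_k,\gamma_k,g_k$, the same dyadic argument giving $|u(x)-u(y)|\lesssim \rho^{k}+\mathrm{C}k|x-y|\lesssim|x-y|\ln(|x-y|^{-1})$, and the same final combination with the interior estimates of \cite{ET}. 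The one place where you genuinely deviate is the verification that $g_k$ satisfies the hypotheses of Proposition \ref{compII} uniformly in $k$: the paper only estimates the seminorm $[g_k]_{0,\alpha,\mathrm{T}_1}$ (using $\rho^{k\alpha}\|\mathbf{b}_k\|\to 0$) and is silent about $\|g_k\|_{L^{\infty}(\mathrm{T}_1)}$, which a priori contains the term $\beta(y+\rho^{k}x)\cdot\mathbf{b}_k$ of size up to $\mathrm{C}k$. Your compatibility identity $\beta(y)\cdot\mathbf{b}_k+\gamma(y)\mathbf{a}_k=g(y)$, propagated through the update $\mathbf{a}_{k+1}=\mathbf{a}_k+\rho^{k}\tilde{\mathbf{a}}$, $\mathbf{b}_{k+1}=\mathbf{b}_k+\tilde{\mathbf{b}}$ together with $\beta_k(0)\cdot\tilde{\mathbf{b}}+\gamma_k(0)\tilde{\mathbf{a}}=g_k(0)$, forces $g_k(0)=0$ and, combined with the seminorm bound, yields the needed uniform $C^{0,\alpha}$ control of $g_k$; this is a sound refinement that actually closes a point the paper leaves implicit. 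The only caveat is your assertion that the one-jet of the approximating solution $\mathfrak{h}$ "inherits" the boundary identity: this requires the standard fact that a viscosity solution which is $C^{1,\alpha}$ up to $\mathrm{T}$ (as guaranteed by \cite[Theorem 1.2]{LiZhang}) satisfies the oblique condition classically at boundary points, which is true under the strict obliqueness $\beta\cdot\overrightarrow{\textbf{n}}\ge\mu_0$ but should be stated and justified (a one-sided touching argument using the sign of $\beta\cdot e_n$) rather than taken for granted.
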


\begin{proof}
As in the proof of Theorem \ref{holderoptimal}, we can assume, without loss of generality, that $u$ is normalized ( i.e., $\|u\|_{L^{\infty}(\mathrm{B}^{+}_{1})} \leq 1$), $\|f\|_{L^{n}(\mathrm{B}^{+}_{1})}\leq \eta$ and $\|g\|_{C^ {0,\alpha}(\overline{\mathrm{T}_{1}})}\leq 1$, where $\eta>0$ is the constant of Lemma \eqref{compII}. Now, we take $\eta_{0}=\eta$. For a fixed $y\in\mathrm{T}_{\frac{1}{2}}$, we assert that there is a sequence of affine functions $(\mathfrak{l}_{k})_{k\in\mathbb{N }}$ of the form $\mathfrak{l}_{k}(x)=\mathbf{a}_{k}+\mathbf{b}_{k}\cdot(x-y)$ satisfying
\begin{eqnarray}\label{estimate1theorem4.2.5}
\sup_{\mathrm{B}^{+}_{\rho^{k}}(y)}|u-\mathfrak{l}_{k}|\leq \rho^{k},
\end{eqnarray}
where $\rho>0$ is the radius of the half ball found in Lemma \ref{compII}. Furthermore, such a sequence must satisfy, for all $k\in\mathbb{N}$,
\begin{eqnarray}\label{estimate2theorem4.2.5}
|\mathbf{a}_{k+1}-\mathbf{a}_{k}|\leq \mathrm{C}(\verb"universal")\rho^{k} \ \ \mbox{and} \ \ \|\mathbf{b}_{k+1}-\mathbf{b}_{k}\|\leq \mathrm{C}(\verb"universal"),
\end{eqnarray}
Indeed, we prove this statement by induction on $k$. The first case, i.e. $k=1$, corresponds to Lemma \ref{compII}. Now, assuming it holds for $k$, we may define the function
\begin{eqnarray*}
v_{k}(x) \defeq \frac{(u-\mathfrak{l}_{k})(y+\rho^{k}x)}{\rho^{k}}, \ x\in\mathrm{B}^ {+}_{1}\cup \mathrm{T}_{1},
\end{eqnarray*}
Thus, we see that $v_{k}$ is a viscosity solution to
$$
\left\{
\begin{array}{rclcl}
F_{k}(D^{2}v_{k},x)& = & f_{k}(x) &\mbox{in}& \mathrm{B}^{+}_{1} \\
\beta_{k}(x)\cdot Dv_{k}(x)+\gamma_{k}(x) v_{k}(x) & = & g_{k}(x) &\mbox{on}& \mathrm{T}_{1},
\end{array}
\right.
$$
here
$$
\left\{
\begin{array}{rcl}
F_{k}(\mathrm{X},x) & \defeq & \rho^{k}F\left(\frac{1}{\rho^{k}} \mathrm{X},y+\rho^ {k}x\right) \\
\tilde{f}(x) & \defeq & \rho^{k} f(y+\rho^{k}x)\\
\beta_{k}(x) & \defeq & \beta(y+\rho^{k}x)\\
\gamma_{k}(x) & \defeq & \rho^{k}\gamma(y+\rho^{k}x)\\
g_{k}(x)& \defeq & g(y+\rho^{k}x)-\beta(y+\rho^{k}x)\cdot \mathbf{b}_{k}-\gamma(y+\rho^{ k}x)\mathfrak{l}_{k}(y+\rho^{k}x).
\end{array}
\right.
$$

Now, we note that $F_{k}$ is a $(\lambda,\Lambda)$-elliptic operator and by the hypothesis \eqref{hypothesis1theorem4.2.5}, we check that
\begin{eqnarray*}
\intav{\mathrm{B}^{+}_{1}}|\Phi_{F_{k}}(x)|^{n}dx=\intav{\mathrm{B}^{+}_{ \rho^{k}}(y)}|\Phi_{F}(x,y)|^{n}dx\leq \eta^{n}.
\end{eqnarray*}
Furthermore, we verify that
\begin{eqnarray*}
\int_{\mathrm{B}^{+}_{1}}|f_{k}|^{n}dx=\int_{\mathrm{B}^{+}_{\rho^{k}} (y)}|f(z)|^{n}dz\leq \|f\|_{L^{n}(\mathrm{B}^{+}_{1})}^{n}\leq \eta^{n}.
\end{eqnarray*}
Additionally, we also see that
$$
  [\beta_k]_{0,\alpha,\mathrm{T}_{1}}=\rho^{k\alpha}[\beta]_{0,\alpha,\mathrm{T}_{\rho^{k}}(y)}\leq [\beta]_{0,\alpha,\mathrm{T}_{1}}<\infty,\\
$$
and
$$
[\gamma_k]_{0,\alpha,\mathrm{T}_{1}} = \rho^{k(1+\alpha)}[\gamma]_{0,\alpha, \mathrm{T}_{\rho^{k}}(y)}\leq [\gamma]_{0,\alpha,\mathrm{T}_{1}}<\infty,
$$
since $\beta,\gamma\in C^{0,\alpha}(\overline{ \mathrm{T}_{1}})$. Moreover, we also have that
\begin{eqnarray*}
[g_{k}]_{0,\alpha,\mathrm{T}_{1}}&\leq& \rho^{k\alpha}[g]_{0,\alpha,\mathrm{T}_ {\rho^{k}}(y)}+\rho^{k\alpha}\|\mathbf{b}_{k}\|[\beta]_{0,\alpha,\mathrm{T}_{\rho^{k }}(y)}+\rho^{k\alpha}\|\mathfrak{l}_{k}\|_{L^{\infty}(\overline{\mathrm{T}_{\rho^{k}}(y)})}[\gamma ]_{0,\alpha,\mathrm{T}_{\rho^{k}}(y)}+\\
&+& \rho^{k\alpha}\|\gamma\|_{L^{\infty}(\mathrm{T}_{\rho^{k}}(y))}[\mathfrak{l}_{k}(y+\rho^{k}\cdot)]_{0,\alpha,\mathrm{T}_{1}}\\
&\leq&\|g\|_{C^{0,\alpha}(\overline{\mathrm{T}_{1}})}+\rho^{k\alpha}\|\mathbf{b}_{k}\|\|\beta\|_{C^{0,\alpha}(\overline{\mathrm{T}_{1}})}+\|\mathfrak{l}_{k}\|_{L^{\infty}(\overline{\mathrm{T}_{\rho^{k}}(y)})}\|\gamma\|_{C^{0,\alpha}(\overline{\mathrm{T}_{1}})}+\\
&+& (\text{diam}(\mathrm{T}_{\frac{1}{2}}))^{1-\alpha}\rho^{k}\|\mathbf{b}_{k}\|\|\gamma\|_{C^{0,\alpha}(\overline{\mathrm{T}_{1}})}<\infty,
\end{eqnarray*}
since  $\beta,\gamma,g\in C^{0,\alpha}(\overline{\mathrm{T}_{1}})$, $\|\mathfrak{l}_{k}\|_{L^{\infty}(\overline{\mathrm{T}_{\rho^{k}}(y)})}\leq \frac{3}{2}$ (by \eqref{estimate1theorem4.2.5}), and $\|\mathbf{b}_{k}\|\leq \|\mathbf{b}_{1}\|+\mathrm{C}(k-1)$ (by \eqref{estimate2theorem4.2.5}). Thus,
$$
\rho^{k\alpha}\|\mathbf{b}_{k}\|=\text{o}(k) \quad \text{as} \quad  k\to\infty.
$$

Hence, by the induction hypothesis \eqref{estimate1theorem4.2.5}, we have  $\|v_{k}\|_{L^{\infty}(\mathrm{B}^{+}_{1}) }\leq 1$. Then, we fall into the hypotheses of Lemma  \ref{compII}. Thus, we can find an affine function $\tilde{\mathfrak{l}}(x)=\tilde{\mathbf{a}}+\tilde{\mathbf{b}} \cdot x$ such that
\begin{eqnarray}\label{estimate3theorem4.2.5}
\sup_{\mathrm{B}^{+}_{\rho}}|v_{k}-\tilde{\mathfrak{l}}| \leq \rho,
\end{eqnarray}
where $|\tilde{\mathbf{a}}|, |\tilde{\mathbf{b}}| \le \mathrm{C}(\verb"universal")$. Now, by defining
$$
\mathbf{a}_{k+1}=\mathbf{a}_{k}+\rho^{k}\tilde{\mathbf{a}} \quad \text{and} \quad  \mathbf{b}_{k+1}=\mathbf{b}_{k}+\tilde{\mathbf{b}},
$$
we can see, based on the universal bounds of the constants $\tilde{\mathbf{a}}$ and $\tilde{\mathbf{b}}$, that
\begin{eqnarray*}
|\mathbf{a}_{k+1}-\mathbf{a}_{k}|\leq \mathrm{C}(\verb"universal")\rho^{k} \ \ \mbox{and} \ \ \|\mathbf{b}_{k+1}-\mathbf{b}_{k}\|\leq \mathrm{C}(\verb"universal").
\end{eqnarray*}
Thus, putting $\mathfrak{l}_{k+1}(x)=\mathbf{a}_{k+1}+\mathbf{b}_{k+1}\cdot (x-y)$, we can rescale the inequality  \eqref{estimate3theorem4.2.5},
\begin{eqnarray*}
\sup_{\mathrm{B}^{+}_{\rho^{k+1}}(y)}|u-\mathfrak{l}_{k+1}|\leq \rho^{k+1},
\end{eqnarray*}
thereby completing the $(k+1)^{\text{th}}-$step of induction.

Now, note that the sequence $(\mathbf{a}_{k})_{k\in \mathbb{N}}$ is Cauchy in $\mathbb{R}$, and consequently, there exists $\displaystyle \mathbf{a}_{\infty}=\lim_{k \to \infty}\mathbf{a}_{k}$.  On the other hand, analogous to the proof of Theorem  \ref{holderoptimal}, the  sentence \eqref{estimate2theorem4.2.5} implies the following rate of convergence for all $k\in\mathbb{N}$
\begin{eqnarray}\label{estimate4theorem4.2.5}
|u(y)-\mathbf{a}_{k}|\leq \frac{\mathrm{C}}{1-\rho}\rho^{k}.
\end{eqnarray}
Moreover, once again by \eqref{estimate2theorem4.2.5}, putting $\mathbf{b}_{0}=0$, we have for all $k\in\mathbb{N}$,
\begin{eqnarray}\label{estimate5theorem4.2.5}
\|\mathbf{b}_{k}\|\leq \sum_{j=0}^{k-1}\|\mathbf{b}_{j+1}-\mathbf{b}_{j}\|\leq \mathrm{C}k.
\end{eqnarray}

It is worth noting that in the above construction, we have no guarantee about the convergence of the sequence $(\mathbf{b}_{k})_{k\in\mathbb{N}}$, and thus, such a sequence could not be convergent

Finally, given $r\in(0,\rho)$, with $\rho \le e^{-1}$, we can find $k\in\mathbb{N}$ such that $\rho^{k+1}<r\leq \rho^{k} $. Hence, by \eqref{estimate1theorem4.2.5}, \eqref{estimate4theorem4.2.5} and \eqref{estimate5theorem4.2.5}, we obtain
\begin{eqnarray}\label{estimate6theorem4.2.5}
\sup_{x\in\mathrm{B}^{+}_{r}(y)}|u(x)-u(y)|&\leq& \sup_{\mathrm{B}^{+}_ {r}(y)}|u-\mathfrak{l}_{k}|+|u(y)-\mathbf{a}_{k}|+\|\mathbf{b}_{k}\|\sup_{x\in\mathrm{B}^{+}_ {r}(y)}|x-y|\nonumber\\
&\leq& \sup_{\mathrm{B}^{+}_{\rho^{k}}(y)}|u-\mathfrak{l}_{k}|+\frac{\mathrm{C}}{1-\rho}\rho^{k}+\mathrm{C}kr\leq \rho^{k}+\frac{\mathrm{C}}{1-\rho}\rho^{k}+\mathrm{C}k\rho^{k}\nonumber\\
&=&\left(1+\frac{\mathrm{C}}{1-\rho}\right)\rho^{k}+\mathrm{C}k\rho^{k}\leq\mathrm{C}(\rho^{k}+k\rho^{k})=\frac{\mathrm{C}}{\rho}\left(\frac{1}{k}+1\right)k\rho^{k}\nonumber\\
&\stackrel{r>\rho^{k+1}}{\leq}& \mathrm{C}kr\leq \mathrm{C}r\frac{\ln (r)}{\ln (\rho)}= \frac {-\mathrm{C}}{-\ln (\rho)}r\ln(r)=-\mathrm{C}r\ln(r),
\end{eqnarray}
where we must remember that $t\longmapsto \ln (t)$ is an increasing function, that is, $\frac {\ln (r)}{\ln (\rho)}\geq k$, since $\rho<\frac{1}{2}$, and thus $\log\rho<0$.

Now, we prove that $u\in C^{0,\text{Log-Lip}}\left(\overline{\mathrm{B}^{+}_{\frac{1}{2}}}\right)$. Indeed, let $x\in\mathrm{B}^{+}_{\frac{1}{2}}$ and $y\in\mathrm{T}_{\frac{1}{2}}$. We have two cases to analyze:
\begin{enumerate}
\item[\checkmark] {\bf Case 1:} $e^{-1} \ge r^{\prime}=|x-y|\geq\rho$.\\

In this case, $r^{\prime}\ln ({r^{\prime}}^{-1})\geq \rho\ln (\rho^{-1})$, and thus by inequality $\ln(\rho^{-1})\geq 1-\rho$ we obtain that
\begin{eqnarray*}
\frac{|u(x)-u(y)|}{|x-y|\ln(|x-y|^{-1})}\leq \frac{\overbrace{2\|u\|_{L^{\infty}(\overline{\mathrm {B}^{+}_{1/2}})}}^{\leq 2}}{\rho\ln(\rho^{-1})}\leq \frac{2}{\rho(1-\rho)}=\mathrm{C}.
\end{eqnarray*}

\item[\checkmark] {\bf Case 2:} $r^{\prime}=|x-y|< \rho$.\\

Note that the estimate \eqref{estimate6theorem4.2.5} also holds in $\overline{\mathrm{B}^{+}_{r^{\prime}}(y)}$, and hence as $x\in\overline{\mathrm{B}^{+}_ {r^{\prime}}(y)}$, it follows from such an estimate that
\begin{eqnarray*}
\frac{|u(x)-u(y)|}{|x-y|\ln(|x-y|^{-1})}\leq\frac{-\mathrm{C} r^{\prime}\log r^{\prime}}{|x-y|\log|x-y|^{-1} }=\mathrm{C}.
\end{eqnarray*}
\end{enumerate}

Therefore, from the above cases, and \cite[Theorem 2]{ET}, we can conclude that $u\in C^{0,\text{Log-Lip}}\left(\overline{\mathrm{B}^{+}_{\frac{1}{2}}}\right)$, and the desired estimate holds.
\end{proof}

\section{Optimal gradient regularity}

In this section, we will develop the study of the optimal regularity of solutions for $\eqref{1}$ when $f \in L^{p}(\mathrm{B}^{+}_{1})$ for $n<p<\infty$, and the boundary data are regular enough (to be clarified \textit{a posteriori}). The regularity estimates addressed in this scenario will be $C^{1,\nu}$, where $\nu\in(0,1)$ is an optimal constant depending on the optimal regularity exponent $C^{1,\alpha}$ of the homogeneous problem with frozen coefficients and oblique boundary conditions (see, \cite[Theorem 1.2]{LiZhang} for details), as well as the range of integrability of the source term.

Now, we will explain a little more about the constant $\nu$. Such a constant will be
\begin{eqnarray*}
\nu \defeq \min\left\{\alpha^{-},\frac{p-n}{p}\right\}
\end{eqnarray*}
where the notation above on $\nu$ should be understood as follows
\begin{eqnarray*}
\left\{
\begin{array}{rlcl}
\mbox{If} & \frac{p-n}{p}<\alpha &\mbox{then}& \ u\in C^{1,\frac{p-n}{p}}    \\
\mbox{If} &  \frac{p-n}{p}\geq \alpha &\mbox{then}& \ u\in C^{1,\varsigma} \ \mbox{for any} \ 0<\varsigma<\alpha\\
	\end{array}
	\right.
\end{eqnarray*}
With such observations in mind, we establish the following first step of an affine approximation scheme of $(1+\overline{\alpha})$-order.

\begin{proposition}
\label{compIII}
Let $u$ be a normalized viscosity solution of \eqref{1} for $f \in L^p(B_1^{+})$ for some $p \in (n, \infty)$, where $\beta$, $\gamma$, $g\in C^{0,\alpha}(\overline{\mathrm{T}_{1}} )$ to some $\alpha\in(0,1)$. Given $\overline{\alpha}\in (0,\alpha)$, there exist $\eta>0$ and $\rho\in\left(0,\frac{1}{2}\right]$, depending only on $n$, $p$, $\lambda$, $\Lambda$, $\mu_{0}$, $\alpha$, $\overline{\alpha}$, $\|\beta\|_{C^{0,\alpha }(\overline{\mathrm{T}_{1}})}$, $\|\gamma\|_{C^{0,\alpha}(\overline{\mathrm{T}_{1}} )}$ and $\|g\|_{C^{0,\alpha}(\overline{\mathrm{T}_{1}})}$ such that, if
$$
\max\left\{  \left(\intav{\mathrm{B}^{+}_{1}} |\Phi_{F}(x)|^{n}dx\right)^{1/n}, \,\, \left(\int_{\mathrm{B}^{+}_{1}}|f(x)|^{p}dx\right)^{1/p}\right\}\leq \eta,
$$
 then there exists an affine function $\mathfrak{l}(x)=\mathbf{a}+\mathbf{b}\cdot x$ with universally bounded coefficients, in the following sense
$$
|\mathbf{a}|+\|\mathbf{b}\|\leq \mathrm{C}\left(n,\lambda,\Lambda,\mu_{0},\alpha,\|\beta\|_{C^{0,\alpha}(\overline{ \mathrm{T}_{1}})},\|\gamma\|_{C^{0,\alpha}(\overline{\mathrm{T}_{1}})},\|g\|_{C^{0,\alpha}(\overline{\mathrm{T}_{1}})}\right),
$$
such that
$$
\sup_{\mathrm{B}^{+}_{\rho}}|u-\mathfrak{l}|\leq \rho^{1+\overline{\alpha}}.
$$
\end{proposition}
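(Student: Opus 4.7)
The plan is to mirror the reasoning of Propositions \ref{compI} and \ref{compII}, but pushing the matching between $u$ and the frozen-coefficient approximation to the order $1+\overline{\alpha}$ with $\overline{\alpha}<\alpha$, which is where the extra integrability $p>n$ is spent.

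First, I would fix $\delta>0$ to be chosen \textit{a posteriori} and invoke the Approximation Lemma \ref{approx} to produce $\mathfrak{h}$, the viscosity solution of $F(D^{2}\mathfrak{h},0)=0$ in $\mathrm{B}^{+}_{7/8}$ with the same oblique condition $\beta\cdot D\mathfrak{h}+\gamma\mathfrak{h}=g$ on $\mathrm{T}_{7/8}$ and boundary data $\mathfrak{h}=u$ on $\partial\mathrm{B}^{+}_{7/8}\setminus\mathrm{T}_{7/8}$, satisfying $\sup_{\mathrm{B}^{+}_{7/8}}|u-\mathfrak{h}|\le\delta$, provided $\eta$ is chosen small in terms of $\delta$. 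Here I would apply Lemma \ref{approx} at exponent $n$: the hypothesis on $\Phi_F$ is exactly in $L^n$, and, since $p>n$, H\"older's inequality on the bounded set $\mathrm{B}^{+}_{1}$ turns the assumed $L^{p}$ smallness of $f$ into $L^{n}$ smallness (up to an absolute constant).

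Second, by the $C^{1,\alpha}$ boundary estimate for oblique problems with frozen coefficients from \cite[Theorem 1.2]{LiZhang}, one gets $\mathfrak{h}\in C^{1,\alpha}(\overline{\mathrm{B}^{+}_{2/3}})$ with a universal bound $\|\mathfrak{h}\|_{C^{1,\alpha}(\overline{\mathrm{B}^{+}_{2/3}})}\le\widetilde{\mathrm{C}}$, where $\widetilde{\mathrm{C}}=\widetilde{\mathrm{C}}(n,\lambda,\Lambda,\mu_{0},\alpha,\|\beta\|_{C^{0,\alpha}(\overline{\mathrm{T}_{1}})},\|\gamma\|_{C^{0,\alpha}(\overline{\mathrm{T}_{1}})},\|g\|_{C^{0,\alpha}(\overline{\mathrm{T}_{1}})})$. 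Setting $\mathbf{a}\defeq\mathfrak{h}(0)$, $\mathbf{b}\defeq D\mathfrak{h}(0)$ and $\mathfrak{l}(x)\defeq\mathbf{a}+\mathbf{b}\cdot x$ gives the universal bound $|\mathbf{a}|+\|\mathbf{b}\|\le\widetilde{\mathrm{C}}$, and a first-order Taylor expansion yields $\sup_{\mathrm{B}^{+}_{r}}|\mathfrak{h}-\mathfrak{l}|\le\widetilde{\mathrm{C}}\,r^{1+\alpha}$ for every $r\in(0,2/3)$.

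Third, I would calibrate the radius and the approximation tolerance. Since $\overline{\alpha}<\alpha$, pick
$$
\rho\defeq\min\left\{\tfrac{1}{2},\,(2\widetilde{\mathrm{C}})^{-\frac{1}{\alpha-\overline{\alpha}}}\right\}\quad\text{and}\quad\delta\defeq\tfrac{1}{2}\rho^{1+\overline{\alpha}}.
$$
By construction $\widetilde{\mathrm{C}}\rho^{1+\alpha}\le\tfrac{1}{2}\rho^{1+\overline{\alpha}}$, and this choice of $\delta$ fixes $\eta$ through Lemma \ref{approx}. A triangle inequality then closes the estimate
$$
\sup_{\mathrm{B}^{+}_{\rho}}|u-\mathfrak{l}|\le\sup_{\mathrm{B}^{+}_{\rho}}|u-\mathfrak{h}|+\sup_{\mathrm{B}^{+}_{\rho}}|\mathfrak{h}-\mathfrak{l}|\le\delta+\widetilde{\mathrm{C}}\rho^{1+\alpha}\le\rho^{1+\overline{\alpha}}.
$$

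The only real technical point is the uniform $C^{1,\alpha}$ up-to-the-boundary estimate for the homogeneous oblique problem with frozen coefficients, which is precisely the content of \cite[Theorem 1.2]{LiZhang}; beyond that, the proof is structurally the same as Propositions \ref{compI} and \ref{compII}, the slightly weaker exponent $\overline{\alpha}<\alpha$ being the price paid to absorb the Taylor remainder into the desired bound. The hypothesis $p>n$ enters only in reducing the $L^{p}$-control of $f$ to the $L^{n}$-control demanded by the approximation step.
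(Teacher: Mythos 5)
Your proposal is correct and follows essentially the same route as the paper: approximate $u$ by the frozen-coefficient solution $\mathfrak{h}$ via Lemma \ref{approx}, use the $C^{1,\alpha}$ estimate of \cite[Theorem 1.2]{LiZhang} to Taylor-expand $\mathfrak{h}$ at the origin, and then tune $\rho$ and $\delta$ exactly as in \eqref{EqRadius_3} so the triangle inequality closes at order $1+\overline{\alpha}$. Your explicit remark on reducing the $L^{p}$-smallness of $f$ to the $L^{n}$-smallness required by the approximation lemma is a minor clarification the paper leaves implicit, but it does not change the argument.
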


\begin{proof}
The proof follows the same lines as the Proposition \ref{compII}. First, we set $\delta>0$, which we will determine \text{a posteriori}. By Lemma \eqref{approx},  we can consider $\mathfrak{h}$ as the viscosity solution to
\begin{eqnarray*}
\left\{
\begin{array}{rclcl}
F(D^{2}\mathfrak{h},0)& = & 0 &\mbox{in}& \mathrm{B}^{+}_{\frac{7}{8}} \\
\beta\cdot D\mathfrak{h}+\gamma \mathfrak{h} & = & g &\mbox{on}& \mathrm{T}_{\frac{7}{8}}\\
\mathfrak{h} & = & u
&\mbox{on}& \partial \mathrm{B}^{+}_{\frac{7}{8}}\setminus \mathrm{T}_{\frac{7}{8}},
	\end{array}
	\right.
\end{eqnarray*}
such that
\begin{eqnarray}\label{estimate1lemma4.3.1}
	\sup_{\mathrm{B}^{+}_{\frac{7}{8}}}|u-\mathfrak{h}|\leq \delta.
\end{eqnarray}
Moreover, by the Theorem \cite[Theorem 1.2]{LiZhang}, it follows that $\mathfrak{h}\in C^{1,\alpha}(\overline{\mathrm{B}^{+}_{\frac{2}{3}} })$, and
\begin{eqnarray}\label{estimate2lemma4.3.1}
\|\mathfrak{h}\|_{C^{1,\alpha}\left(\overline{\mathrm{B}^{+}_{\frac{2}{3}}}\right)}\leq\mathrm{C}=\mathrm{C}\left(n,\lambda,\Lambda,\mu_{0},\alpha,\|\beta\|_{C^{0,\alpha}( \overline{\mathrm{T}_{1}})},\|\gamma\|_{C^{0,\alpha}(\overline{\mathrm{T}_{1}})},\ |g\|_{C^{0,\alpha}(\overline{\mathrm{T}_{1}})}\right).
\end{eqnarray}
In this context, we can  define $\mathbf{a}=\mathfrak{h}(0)$ and $\mathbf{b}=D\mathfrak{h}(0)$. Hence, by \eqref{estimate2lemma4.3.1}, it follows that
\begin{eqnarray}\label{estimate3lemma4.3.1}
\sup_{\mathrm{B}^{+}_{r}}|\mathfrak{h}-\mathfrak{l}|\leq \mathrm{C}r^{1+\alpha}, \,\,\forall r\in \left(0,\frac{2}{3}\right).
\end{eqnarray}
Finally, we may choice $\rho$ and $\delta$ in such a way
\begin{equation}\label{EqRadius_3}
  \rho \defeq \min\left\{\left(\frac{1}{2\mathrm{C}}\right)^{\frac{1}{\alpha-\overline{\alpha}}}, \frac{1}{2}\right\} \ \ \mbox{and } \ \ \delta \defeq \frac{1}{2}\rho^{1+\overline{\alpha}}.
\end{equation}
Note that such choices determine the constant $\eta>0$ from Lemma \eqref{approx}. Thus, the universal bound of the constants $\mathbf{a}$ and $\mathbf{b}$ follows directly from \eqref{estimate2lemma4.3.1}.

In conclusion, from \eqref{estimate1lemma4.3.1}, \eqref{EqRadius_3} and \eqref{estimate3lemma4.3.1}, we obtain
\begin{eqnarray*}
\sup_{\mathrm{B}^{+}_{\rho}}|u-\mathfrak{l}|&\leq& \sup_{\mathrm{B}^{+}_{\rho}}|u-\mathfrak{h}|+\sup_{\mathrm{B}^{+}_{\rho}}|\mathfrak{h}-\mathfrak{l}|\leq \sup_{\mathrm{B}^{+}_{\frac{7}{8}}}|u-\mathfrak{h}|+\sup_{ \mathrm{B}^{+}_{\rho}}|\mathfrak{h}-\mathfrak{l}|\leq \delta+\mathrm{C}\rho^{1+\alpha}\\
&\leq&\frac{1}{2}\rho^{1+\overline{\alpha}}+\left(\frac{1}{2\rho^{\alpha-\overline{\alpha }}}\right)\rho^{1+\alpha}=\frac{1}{2}\rho^{1+\overline{\alpha}}+\frac{1}{2}\rho^{1+\overline{\alpha}}=\rho^{1+\overline{\alpha}},
\end{eqnarray*}
which finishes the proof of the Lemma.
\end{proof}

Finally, we are in a position to address the main result of this section.

\begin{theorem}\label{Holderoptimalgradientestimate}
Let $u$ be a viscosity solution of \eqref{1}, where $\beta,\gamma,g\in C^{0,\alpha}(\overline{\mathrm{T}_{1}})$, $f\in L^{p}(\mathrm{B}^{+}_{1})\cap C^{0}(\mathrm{B}^{+}_{1})$ for $ p \in (n, \infty)$, and
\begin{eqnarray*}
\nu=\min\left\{\alpha^{-},\frac{p-n}{p}\right\}
\end{eqnarray*}
Then, there exists a positive constant $\eta_{0}$, depending only on $n$, $\lambda$, $\Lambda$, $\mu_{0}$, $p$, $\alpha$, $\ |\beta\|_{C^{0,\alpha}(\overline{\mathrm{T}_{1}})}$, and $\|\gamma\|_{C^{0,\alpha} (\overline{\mathrm{T}_{1}})}$ such that if,
\begin{eqnarray}\label{hypothesis1theorem4.3.2}
\intav{\mathrm{B}^{+}_{r}}|\Phi_{F}(x,y)|^{n}dx\leq \eta_{0}^{n}, \,\, \forall y\in\mathrm{B}^{+}_{\frac{1}{2}},\ \forall r\in\left(0,\frac{1}{2}\right),
\end{eqnarray}
then, $u\in C^{1,\nu}\left(\overline{\mathrm{B}^{+}_{\frac{1}{2}}}\right)$. Moreover, the following estimate holds
\begin{eqnarray*}
\|u\|_{C^{1,\nu}\left(\overline{\mathrm{B}^{+}_{\frac{1}{2}}}\right)}\leq\mathrm{C}(\|u\| _{L^{\infty}(\mathrm{B}^{+}_{1})}+\|f\|_{L^{p}(\mathrm{B}^{+}_{1 })}+\|g\|_{C^{0,\alpha}(\overline{\mathrm{T}_{1}})}),
\end{eqnarray*}
where $\mathrm{C}>0$ depends only on $n$, $\lambda$, $\Lambda$, $\mu_{0}$, $p$, $\alpha$, $\| \beta\|_{C^{0,\alpha}(\overline{\mathrm{T}_{1}})}$ and $\|\gamma\|_{C^{0,\alpha}( \overline{\mathrm{T}_{1}})}$.
\end{theorem}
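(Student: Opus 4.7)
The plan is to run a discrete affine approximation scheme analogous to the one used in Theorem \ref{theorem4.2}, but now with decay rate $\rho^{k(1+\nu)}$ rather than $\rho^{k}$ or $\rho^{k(2p-n)/p}$. After a standard normalization reducing to $\|u\|_{L^{\infty}(\mathrm{B}_1^+)}\le 1$, $\|f\|_{L^p(\mathrm{B}_1^+)}\le \eta$ and $\|g\|_{C^{0,\alpha}(\overline{\mathrm{T}_1})}\le 1$, I will fix a boundary point $y\in\mathrm{T}_{1/2}$ and claim the existence of affine functions $\mathfrak{l}_{k}(x)=\mathbf{a}_{k}+\mathbf{b}_{k}\cdot(x-y)$ satisfying
$$
\sup_{\mathrm{B}^{+}_{\rho^{k}}(y)}|u-\mathfrak{l}_{k}|\le \rho^{k(1+\nu)},\qquad |\mathbf{a}_{k+1}-\mathbf{a}_{k}|\le \mathrm{C}\rho^{k(1+\nu)},\qquad \|\mathbf{b}_{k+1}-\mathbf{b}_{k}\|\le \mathrm{C}\rho^{k\nu},
$$
where $\rho\in(0,1/2]$ is the radius from Proposition \ref{compIII} (applied with exponent $\overline{\alpha}=\nu$ if $\nu<\alpha$, or with $\overline{\alpha}$ any value in $(0,\alpha)$ chosen slightly above $\nu$ and then absorbed).

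The induction step proceeds by rescaling. Setting
$$
v_{k}(x) \defeq \frac{(u-\mathfrak{l}_{k})(y+\rho^{k}x)}{\rho^{k(1+\nu)}},
$$
a direct computation shows that $v_{k}$ is a normalized viscosity solution of a problem of type \eqref{1} with operator $F_{k}(\mathrm{X},x)=\rho^{k(1-\nu)}F(\rho^{-k(1-\nu)}\mathrm{X},y+\rho^{k}x)$, source $f_{k}(x)=\rho^{k(1-\nu)}f(y+\rho^{k}x)$, and boundary data $\beta_k$, $\gamma_k=\rho^{k}\gamma(y+\rho^{k}\cdot)$ and $g_{k}(x)=\rho^{-k\nu}[g-\beta\cdot \mathbf{b}_{k}-\gamma\,\mathfrak{l}_{k}](y+\rho^{k}x)$. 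The scaling constraints $\nu\le (p-n)/p$ and $\nu<\alpha$ are precisely what is needed to make $\|f_{k}\|_{L^{p}(\mathrm{B}_1^+)}\le \eta$ (since $\|f_k\|_{L^p}^p\le \rho^{k[p(1-\nu)-n]}\|f\|_{L^p}^p$) and to keep $[g_{k}]_{0,\alpha,\mathrm{T}_1}$ and $[\gamma_k]_{0,\alpha,\mathrm{T}_1}$ uniformly bounded, using the previously established bounds $|\mathbf{a}_{k}|\le \mathrm{C}$ and $\|\mathbf{b}_{k}\|\le \mathrm{C}$. The ellipticity oscillation assumption \eqref{hypothesis1theorem4.3.2} yields $\intav{\mathrm{B}_1^+}|\Phi_{F_{k}}|^{n}\le \eta_{0}^{n}$. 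Hence Proposition \ref{compIII} applies to $v_{k}$, producing an affine correction $\tilde{\mathfrak{l}}(x)=\tilde{\mathbf{a}}+\tilde{\mathbf{b}}\cdot x$ with $|\tilde{\mathbf{a}}|+\|\tilde{\mathbf{b}}\|\le \mathrm{C}$, and setting $\mathbf{a}_{k+1}=\mathbf{a}_{k}+\rho^{k(1+\nu)}\tilde{\mathbf{a}}$, $\mathbf{b}_{k+1}=\mathbf{b}_{k}+\rho^{k\nu}\tilde{\mathbf{b}}$, rescaling gives the induction claim.

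Unlike the Log-Lipschitz case, the bound on $\|\mathbf{b}_{k+1}-\mathbf{b}_{k}\|$ is summable, so $(\mathbf{b}_{k})$ is Cauchy with limit $\mathbf{b}_{\infty}$, and $(\mathbf{a}_{k})$ converges with rate $\rho^{k(1+\nu)}$ to some $\mathbf{a}_{\infty}$. A continuity argument identifies $\mathbf{a}_{\infty}=u(y)$, and the telescoping estimates together with the approximation inequality at scale $\rho^k$ yield
$$
\sup_{x\in \mathrm{B}^{+}_{r}(y)}|u(x)-u(y)-\mathbf{b}_{\infty}\cdot(x-y)|\le \mathrm{C}\,r^{1+\nu}\qquad \text{for all }r\in(0,\rho),
$$
for any $y\in \mathrm{T}_{1/2}$, and the analogous bound for $|\mathbf{b}_{\infty}(y)-\mathbf{b}_{\infty}(y')|$ is obtained by comparing the two affine approximations at an intermediate scale $r\approx |y-y'|$, exactly as in \cite[Theorem 2]{ET}. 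The final step is to patch the boundary $C^{1,\nu}$ estimate with the known interior $C^{1,\nu}$ estimate (available since $p>n$, see \cite{ET}) to obtain the claimed global regularity on $\overline{\mathrm{B}^{+}_{1/2}}$; a standard scaling/normalization argument then restores the right-hand side of the quantitative estimate.

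The main obstacle is the bookkeeping for the boundary data at each iteration: one must verify that $[g_{k}]_{0,\alpha,\mathrm{T}_1}$ stays bounded despite the dilation factor $\rho^{-k\nu}$ in front, which is exactly where the condition $\nu<\alpha$ (hence the appearance of $\alpha^{-}$ in the exponent) is unavoidable, and that the induction is compatible with the growing (but controlled) norm $\|\mathbf{b}_{k}\|$ and the growing $L^{\infty}$ norm of $\mathfrak{l}_{k}$ on $\mathrm{T}_{\rho^{k}}(y)$. These two issues, intertwined through the definition of $g_{k}$, are handled by choosing $\rho$ sufficiently small in Proposition \ref{compIII} so that the prefactors $\rho^{k(\alpha-\nu)}$ dominate the at-most-polynomial growth of $\|\mathbf{b}_{k}\|$.
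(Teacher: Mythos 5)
Your proposal is correct and follows essentially the same route as the paper's proof: normalization, the affine iteration at boundary points driven by Proposition \ref{compIII} with decay $\rho^{k(1+\nu)}$, the same rescaled data $F_k, f_k, \beta_k, \gamma_k, g_k$ with the exponent bookkeeping $\rho^{k[(1-\nu)p-n]}$ and $\rho^{k(\alpha-\nu)}\|\mathbf{b}_k\|$, convergence of the coefficient sequences, the pointwise estimate $|u(x)-u(y)-\mathbf{b}_\infty\cdot(x-y)|\le \mathrm{C}|x-y|^{1+\nu}$, and patching with the interior estimates of \cite{ET}. No essential differences or gaps.
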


\begin{proof}
As in the previous sections, we can assume, without loss of generality, that
$$
\|u\|_{L^{\infty}(\mathrm{B}^{+}_{1})}\leq 1,\quad  \|f \|_{L^{p}(\mathrm{B}^{+}_{1})}\leq \eta \quad \text{and} \quad  \|g\|_{C^{0,\alpha}(\overline {\mathrm{T}_{1}})}\leq 1,
$$
where $\eta>0$ is the constant from Lemma \ref{compIII} when we put $\eta_{0}=\eta$. Fixed $y\in\mathrm{T}_{\frac{1}{2}}$, we assert that there exists a sequence of affine functions $(\mathfrak{l}_{k})_{k\in\mathbb{N }}$ of the form $\mathfrak{l}_{k}(x)=\mathbf{a}_{k}+\mathbf{b}_{k}(x-y)$, such that
\begin{eqnarray}\label{estimate1theorem4.3.2}
\sup_{\mathrm{B}^{+}_{\rho^{k}}(y)}|u-\mathfrak{l}_{k}|\leq \rho^{k(1+\nu)} \quad \forall\,\, k\in\mathbb{N},
\end{eqnarray}
and
\begin{eqnarray}\label{estimate2theorem4.3.2}
|\mathbf{a}_{k+1}-\mathbf{a}_{k}|\leq \mathrm{C}(\verb"universal")\rho^{k(1+\nu)} \,\, \mbox{and} \,\, \|\mathbf{b}_{k+1}-\mathbf{b}_ {k}\|\leq\mathrm{C}(\verb"universal")\rho^{k\nu} \,\, \forall\,\, k\in\mathbb{N},
\end{eqnarray}
where $\rho$ is the universal radius obtained from Lemma \ref{compIII}.

In fact, we will prove such statements by induction on $k$. The case $k=1$ is precisely the statement of Lemma \ref{compIII}. Now, assuming that it holds for some $k\in\mathbb{N}$, we define the following auxiliary function
\begin{eqnarray*}
v_{k}(x) \defeq \frac{(u-\mathfrak{l}_{k})(y+\rho^{k}x)}{\rho^{k(1+\nu)}}, x\in\mathrm{B}^{+}_{1}\cup\mathrm{T}_{1}.
\end{eqnarray*}
Thus, by the induction hypothesis, namely \eqref{estimate1theorem4.3.2}, it follows that $\|v_{k}\|_{L^{\infty}(\mathrm{B}^{+}_{1}) }\leq 1$. Moreover, $v_{k}$ is a viscosity solution for
\begin{eqnarray*}
\left\{
\begin{array}{rclcl}
F_{k}(D^{2}v_{k},x) & = & f_{k}(x) &\mbox{in}& \mathrm{B}^{+}_{1} \\
\beta_{k}(x)\cdot Dv_{k}(x)+\gamma_{k}(x) v_{k}(x) & = & g_{k}(x) &\mbox{on}& \mathrm{T}_{1},
\end{array}
	\right.
\end{eqnarray*}
where
$$
\left\{
\begin{array}{rcl}
F_{k}(\mathrm{X},x) & \defeq & \rho^{k(1-\nu)}F\left(\frac{1}{\rho^{k(1-\nu) }} \mathrm{X},y+\rho^{k}x\right) \\
f_k(x) & \defeq & \rho^{k(1-\nu)} f(y+\rho^{k}x)\\
\beta_{k}(x) & \defeq & \beta(y+\rho^{k}x)\\
\gamma_{k}(x) & \defeq & \rho^{k}\gamma(y+\rho^{k}x)\\
g_{k}(x)& \defeq &\rho^{-k\nu}( g(y+\rho^{k}x)-\beta(y+\rho^{k}x)\cdot \mathbf{b}_{k }-\gamma(y+\rho^{k}x)\mathfrak{l}_{k}(y+\rho^{k}x)).
\end{array}
\right.
$$

It is easy to check that $F_{k}$ is a $(\lambda,\Lambda)$-elliptic operator, and by the hypothesis \eqref{hypothesis1theorem4.3.2}, we have
\begin{eqnarray*}
\intav{\mathrm{B}^{+}_{1}}|\Phi_{F_{k}}(x)|^{n}dx=\intav{\mathrm{B}^{+}_{ \rho^{k}}(y)}|\Phi_{F}(x,y)|^{n}dx\leq \eta^{n}.
\end{eqnarray*}
Furthermore, it follows by the definition of $f_{k}$ that
\begin{eqnarray*}
	\int_{\mathrm{B}^{+}_{1}}|f_{k}|^{p}dx=\rho^{k[(1-\nu)p-n]}\int_{\mathrm{B}^{+}_{\rho^{k}} (y)}|f(z)|^{p}dz\leq \|f\|_{L^{p}(\mathrm{B}^{+}_{1})}^{p}\leq \eta^{p}.
\end{eqnarray*}
We also see that
$$
[\beta_{k}]_{0,\alpha,\mathrm{T}_{1}}=\rho^{k\alpha}[\beta]_{0,\alpha,\mathrm{T}_{\rho^{k}}(y)}\leq [\beta]_{0,\alpha,\mathrm{T}_{1}}<\infty,$$
$$[\gamma_{k}]_{0,\alpha,\mathrm{T}_{1}}=\rho^{k(1+\alpha)}[\gamma]_{0,\alpha, \mathrm{T}_{\rho^{k}}(y)}\leq [\gamma]_{0,\alpha,\mathrm{T}_{1}}<\infty.
$$
Additionally, by $\alpha> \nu$, it follows that
\begin{eqnarray*}
[g_{k}]_{0,\alpha,\mathrm{T}_{1}}&\leq& \rho^{k(\alpha-\nu)}([g]_{0,\alpha, \mathrm{T}_{\rho^{k}}(y)}+\|\mathbf{b}_{k}\|[\beta]_{0,\alpha,\mathrm{T}_{\rho^{k}} (y)}+\|\mathfrak{l}_{k}\|_{L^{\infty}(\overline{\mathrm{T}_{\rho^{k}}(y)})}[\gamma]_{0,\alpha,\mathrm {T}_{\rho^{k}}(y)}+\\
&+&2^{1-\alpha}\|\mathbf{b}_{k}\|\|\gamma\|_{L^{\infty}(\mathrm{T}_{\rho^{k}}(y))})\\
&\leq&\|g\|_{C^{0,\alpha}(\overline{\mathrm{T}_{1}})}+\rho^{k(\alpha-\nu)}\|\mathbf{b}_{k}\|\|\beta\|_{C^{0,\alpha}(\overline{\mathrm{T}_{1}})}+\|\mathfrak{l}_{k}\|_{L^{\infty}(\overline{\mathrm{T}_{\rho^{k}}(y)})}\|\gamma\|_{C^{0,\alpha}(\overline{\mathrm{T}_{1}})}+\\
&+&2^{1-\alpha}\rho^{k(\alpha-\nu)}\|\mathbf{b}_{k}\|\|\gamma\|_{C^{0,\alpha}(\overline{\mathrm{T}_{1}})}<\infty,
\end{eqnarray*}
since as in the proof of Theorem \eqref{theorem4.2},
$$
\|\mathfrak{l}_{k}\|_{L^{\infty}(\overline{\mathrm{T}_{\rho^{k}}(y)})}\leq \frac{3}{2} \quad \text{and} \quad \rho^{k(\alpha-\nu)}\|\mathbf{b}_{k}\|= \text{o}(k) \quad \text{as} \quad k\to \infty.
$$
Thus, we have ensured that $\beta_{k},\gamma_{k},g_{k}\in C^{0,\alpha}(\overline{\mathrm{T}_{1}})$. Therefore, we can invoke Lemma \ref{compIII} to guarantee the existence of an affine function $\tilde{\mathfrak{l}}(x)=\tilde{a}+\tilde{b}\cdot x$ in such a way that
\begin{eqnarray}\label{estimate3theorem4.3.2}
\sup_{\mathrm{B}^{+}_{\rho}}|v_{k}-\tilde{\mathfrak{l}}|\leq \rho^{1+\nu},
\end{eqnarray}
where $|\tilde{\mathbf{a}}|, \|\tilde{\mathbf{b}}\| \le \mathrm{C}(\verb"universal")$. Now, by defining $$
\mathbf{a}_{k+1} \defeq \mathbf{a}_{k}+\rho^{k(1+\nu)}\tilde{\mathbf{a}} \quad  \text{and} \quad  \mathbf{b}_{k+1} \defeq \mathbf{b}_{k}+\rho^{ k\nu}\tilde{\mathbf{b}},
$$
we see  that
\begin{eqnarray*}
|\mathbf{a}_{k+1}-\mathbf{a}_{k}|\leq \mathrm{C}(\verb"universal")\rho^{k(1+\nu)} \ \ \mbox{and} \ \ \|\mathbf{b}_{k+1}-\mathbf{b}_ {k}\|\leq \mathrm{C}(\verb"universal")\rho^{k\nu}.
\end{eqnarray*}
Now, setting $\mathfrak{l}_{k+1}(x)=\mathbf{a}_{k+1}+\mathbf{b}_{k+1}\cdot (x-y)$, we have by scaling back the inequality \eqref{estimate3theorem4.3.2},
\begin{eqnarray*}
\sup_{\mathrm{B}^{+}_{\rho^{k+1}}(y)}|u-\mathbf{l}_{k+1}|\leq \rho^{(k+1)(1 +\nu)}.
\end{eqnarray*}
which completes the statement for $k+1$.

Now, by \eqref{estimate2theorem4.3.2}, we have that $(\mathbf{a}_{k})_{k\in\mathbb{N}}$ and $(\mathbf{b}_{k})_{k\in\mathbb{N}}$ are Cauchy sequences. Thus, there are limits
$$
\displaystyle \mathbf{a}_{\infty} \defeq \lim_{k \to \infty} \mathbf{a}_{k} \quad  \text{and} \quad  \mathbf{b}_{\infty}=\lim_{k \to \infty } \mathbf{b}_{k}.
$$
As in Theorem \ref{theorem4.2}, it is possible to see that $\mathfrak{a}_{\infty}=u(y)$. Furthermore,
we see that for all $k\in\mathbb{N}$ the following rate of convergence of the sequences $(\mathbf{a}_{k})_{k\in\mathbb{N}}$ and $(\mathbf{b}_{k})_{k\in\mathbb{N}}$ hold
\begin{eqnarray}\label{estimate4theorem4.3.2}
|u(y)-\mathbf{a}_{k}|\leq \frac{\mathrm{C}}{1-\rho^{1+\nu}}\rho^{k(1+\nu)} \ \ \mbox{and} \ \ \|\mathbf{b}_{\infty}-\mathbf{b}_{k}\|\leq \frac{\mathrm{C}}{1-\rho^{\nu}}\rho^{k\nu}.
\end{eqnarray}

Finally, given $r\in(0,\rho)$, we can choose $k\in\mathbb{N}$ so that $\rho^{k+1}<r\leq\rho^{k}$. By \eqref{estimate1theorem4.3.2} and \eqref{estimate4theorem4.3.2}, we obtain
\begin{eqnarray}\label{estimate5theorem4.3.2}
\sup_{x\in \mathrm{B}^{+}_{r}(y)}|u(x)-u(y)-\mathbf{b}_{\infty}(x-y)|&\leq& \sup_{\mathrm{B}^{+}_{r}(y)}|u-\mathfrak{l}_{k}|+|u(y)-\mathbf{a}_{k}|+\nonumber\\
&+&\sup_{x\in \mathrm{B}^{+}_{r}(y)}|(\mathbf{b}_{k}-\mathbf{b}_{\infty})\cdot(x-y)|\nonumber\\
&\leq&\sup_{\mathrm{B}^{+}_{\rho^{k}}(y)}|u-\mathfrak{l}_{k}|+\frac{\mathrm{C}}{1-\rho^{1+\nu}}\rho^{\nu}+\|\mathbf{b}_{k}-\mathbf{b}_{\infty}\|r\nonumber\\
&\leq& \rho^{k(1+\nu)}+\frac{\mathrm{C}}{1-\rho^{1+\nu}}\rho^{\nu}+\frac{\mathrm{C}}{1-\rho^{\nu}}\rho^{k\nu}\rho^{k}\nonumber\\
&\leq&\left(1+\frac{\mathrm{C}}{1-\rho^{1+\nu}}+\frac{\mathrm{C}}{1-\rho^{\nu} }\right)\rho^{k(1+\nu)}\nonumber\\
&\leq&\frac{1}{\rho^{1+\nu}}\left(1+\frac{2\mathrm{C}}{1-\rho^{1+\nu}}\right)\rho^{(k+1)(1+\nu)}\nonumber \\
&\leq&\mathrm{C}r^{1+\nu}.
\end{eqnarray}

In conclusion, as such an estimate is valid for each $y\in\mathrm{T}_{\frac{1}{2}}$ (and being more precise  $\mathbf{b}_{\infty}=\mathbf{b}_{\infty}(y)$), then, from \eqref{estimate5theorem4.3.2}, and interior estimates (see \cite[Section 4]{ET}), it follows that $u\in C^{1,\nu}\left(\overline{\mathrm{B}^{+}_{ \frac{1}{2}}}\right)$ obtained in a similar way as the proof of Theorem \ref{holderoptimal} with the desired estimate.
\end{proof}

\section{ $C^{1, Log-Lip}$ regularity estimates}\label{Section06}

In this section, we will deal with the limiting integrability case, i.e., when the source term $f$ has \textbf{bounded mean oscillation}, see Definition \ref{DefBMO} for details.

In this part, we will work initially with the problem with constant coefficients, that is, with the following problem
\begin{eqnarray}\label{coeficientesconstantes}
\left\{
\begin{array}{rclcl}
F(D^{2}u) & = &  f(x) &\mbox{in}&   \mathrm{B}^{+}_{1} \\
\beta(x)\cdot Du(x)+\gamma(x) u(x) & = & g(x)  &\mbox{on}&  \mathrm{T}_{1},
\end{array}
\right.
\end{eqnarray}
where we will assume the following regularity assumption:

\begin{statement}[{\textbf{(RA)}}]\label{RA}

For any matrix $\mathrm{M}\in Sym(n)$ such that $F(\mathrm{M})=0$, the translated problem
\begin{eqnarray}
\left\{
\begin{array}{rclcl}
F(D^{2}\mathfrak{h}+\mathrm{M})= 0 &\mbox{in}& \mathrm{B}^{+}_{\frac{7}{8}} \\
\beta\cdot D\mathfrak{h}+\gamma \mathfrak{h}=g &\mbox{on}& \mathrm{T}_{\frac{7}{8}},
\end{array}
\right.
\end{eqnarray}
admits solutions $\mathfrak{h}\in C^{2,\tilde{\alpha}}\left(\overline{\mathrm{B}^{+}_{\frac{2}{3}}}\right)\cap C^{0}\left(\mathrm{B}^{+}_{\frac{7}{8}}\cup \mathrm{T}_{\frac{7}{8}}\right)$, for some $\tilde{\alpha}\in(0,\alpha]$ when $\beta,\gamma,g\in C^{1,\alpha}(\overline{ \mathrm{T}_{1}})$, and the following estimate holds
\begin{eqnarray*}
\|\mathfrak{h}\|_{C^{2,\tilde{\alpha}}\left(\overline{\mathrm{B}^{+}_{\frac{2}{3}}}\right)} \leq\mathrm{C}^{\ast}\left(\|\mathfrak{h}\|_{L^{\infty}\left(\mathrm{B}^{+}_{\frac{7}{8}} \right)}+\|g\|_{C^{1,\alpha}\left(\overline{\mathrm{T}_{\frac{7}{8}}}\right)}\right).
\end{eqnarray*}

We emphasize that this assumption holds true whenever we assume that $F$ is a convex operator (cf. \cite[Theorem 1.3]{LiZhang}).
\end{statement}

More precisely, the source term $f\in p$-BMO($\mathrm{B}^{+}_{1}$)$\cap L^{p}(\mathrm{B}^{+}_{1})$ for $p\geq n-\varepsilon_{0}$, where $\varepsilon_{0}$ is the Escauriaza's constant. In this scenario, we can conclude that solutions of \eqref{coeficientesconstantes} are $C^{1, \text{Log-Lip}}$.

Furthermore, using the approximation Lemma \ref{lemma4.4.3}, we can ensure the existence of a quadratic approximation for normalized solutions with a small semi-norm in $p-\text{BMO}$. This constitutes the focus of the next result.

\begin{lemma}\label{lemma4.4.4}
Let $u$ be a normalized viscosity solution of
\begin{eqnarray*}
\left\{
\begin{array}{rclcl}
F(D^{2}u+\tilde{\mathrm{M}}) & = & f(x) &\mbox{in}& \mathrm{B}^{+}_{1} \\
\beta(x)\cdot Du(x)+\gamma(x) u(x) & = & g(x) &\mbox{on}& \mathrm{T}_{1},
\end{array}
\right.
\end{eqnarray*}
where $\beta$, $\gamma$, $g\in C^{1,\alpha}(\overline{\mathrm{T}_{1}})$ for some constant $\alpha\in(0, 1)$, $\tilde{\mathrm{M}}\in Sym(n)$ is such that $F(\tilde{\mathrm{M}})=0$. Suppose further the Statement \eqref{RA} holds. There are $\eta>0$ and $\rho\in\left(0,\frac{1}{2}\right]$, depending only on $n$, $p$, $\lambda$, $\Lambda $, $\mu_{0}$, $\mathrm{C}^{*}$, $\|\beta\|_{C^{1,\alpha}(\overline{\mathrm{T}_{1}})}$, $\|\gamma\|_{C^{1,\alpha}(\overline{\mathrm{T}_{1}})}$ and $\|g\|_{C^{1,\alpha}(\overline{\mathrm{T}_{1}})}$ such that, if
$$
\|f\|_{p-BMO(\mathrm{B}^{+}_{1})}\leq \eta,
$$
for some $p\geq n-\varepsilon$. Then, there exists a quadratic polynomial $\mathfrak{p}(x)=\mathbf{a}+\mathbf{b}\cdot x+\frac{1}{2}x^{t}\mathrm{M}x$ with universally bounded coefficients, in the following sense
$$
|\mathbf{a}|+\|\mathbf{b}\|+\|\mathrm{M}\|\leq \mathrm{C}\left(n,\lambda,\Lambda,\mu_{0},\alpha,\|\beta\|_{C^{1,\alpha}(\overline{\mathrm{T}_{1}})},\|\gamma\|_{C^{1,\alpha}(\overline{\mathrm{T}_{1}})},\|g\|_{C^{1,\alpha}(\overline{\mathrm{T}_{1}})}\right),
$$
such that
$$
\sup_{\mathrm{B}^{+}_{\rho}}|u-\mathfrak{p}|\leq \rho^{2}.
$$
Furthermore, we still have that $F(\mathrm{M}+\tilde{\mathrm{M}})=(f)_{1}$.
\end{lemma}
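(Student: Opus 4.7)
The plan is to combine Approximation Lemma II (Lemma~\ref{lemma4.4.3}) with the higher-order regularity provided by Statement~\ref{RA}, after an algebraic shift that puts the approximating equation into the form required by (RA). First, introduce the $(\lambda,\Lambda)$-elliptic operator $G(\mathrm{X})\defeq F(\mathrm{X}+\tilde{\mathrm{M}})$ satisfying $G(\mathcal{O}_{n\times n})=F(\tilde{\mathrm{M}})=0$, so that $G(D^{2}u)=f$ in $\mathrm{B}^{+}_{1}$. Applied to $G$, Lemma~\ref{lemma4.4.3} provides, for each target accuracy $\delta>0$, a threshold $\eta>0$ such that whenever $\|f\|_{p-\textrm{BMO}(\mathrm{B}^{+}_{1})}\le\eta$, one obtains a viscosity solution $\mathfrak{h}$ of
$$
\left\{
\begin{array}{rclcl}
F(D^{2}\mathfrak{h}+\tilde{\mathrm{M}}) & = & (f)_{1} & \mbox{in} & \mathrm{B}^{+}_{7/8}\\
\beta\cdot D\mathfrak{h}+\gamma\,\mathfrak{h} & = & g & \mbox{on} & \mathrm{T}_{7/8}\\
\mathfrak{h} & = & u & \mbox{on} & \partial \mathrm{B}^{+}_{7/8}\setminus \mathrm{T}_{7/8},
\end{array}
\right.
$$
with $\sup_{\mathrm{B}^{+}_{7/8}}|u-\mathfrak{h}|\le\delta$ and $\|\mathfrak{h}\|_{L^{\infty}(\mathrm{B}^{+}_{7/8})}$ universally bounded via Lemma~\ref{ABP}.

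Next, I reduce to the framework of Statement~\ref{RA}. Since $t\mapsto F(tI+\tilde{\mathrm{M}})$ is continuous and strictly monotone in $t$ with value $0$ at $t=0$, the intermediate value theorem furnishes $t_{0}\in\mathbb{R}$ with $|t_{0}|\le|(f)_{1}|/(n\lambda)$ such that, writing $\mathrm{N}_{0}\defeq t_{0}I$, one has $F(\mathrm{N}_{0}+\tilde{\mathrm{M}})=(f)_{1}$. The quantity $|(f)_{1}|$ is universally controlled by the data, so $\|\mathrm{N}_{0}\|\le\mathrm{C}$ universally. Let $\tilde{\mathfrak{h}}(x)\defeq\mathfrak{h}(x)-\tfrac{1}{2}x^{t}\mathrm{N}_{0}\,x$ and $\hat{F}(\mathrm{X})\defeq F(\mathrm{X})-(f)_{1}$; the operator $\hat F$ is $(\lambda,\Lambda)$-elliptic with $\hat F(\mathrm{N}_{0}+\tilde{\mathrm{M}})=0$ and inherits convexity from $F$. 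A direct computation then shows
$$
\hat F\!\left(D^{2}\tilde{\mathfrak{h}}+(\mathrm{N}_{0}+\tilde{\mathrm{M}})\right)=0\ \ \mbox{in}\ \mathrm{B}^{+}_{7/8},\quad \beta\cdot D\tilde{\mathfrak{h}}+\gamma\,\tilde{\mathfrak{h}}=\tilde{g}\ \ \mbox{on}\ \mathrm{T}_{7/8},
$$
with modified boundary data $\tilde{g}\defeq g-\beta\cdot(\mathrm{N}_{0}x)-\tfrac{1}{2}\gamma\,x^{t}\mathrm{N}_{0}\,x\in C^{1,\alpha}(\overline{\mathrm{T}_{7/8}})$ of universally bounded $C^{1,\alpha}$-norm.

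Now Statement~\ref{RA}, applied to $\hat F$ and $\tilde{\mathfrak{h}}$, yields $\tilde{\mathfrak{h}}\in C^{2,\tilde{\alpha}}(\overline{\mathrm{B}^{+}_{2/3}})$ with $\|\tilde{\mathfrak{h}}\|_{C^{2,\tilde{\alpha}}}\le\mathrm{C}_{0}$ universally; undoing the quadratic translation transfers the same $C^{2,\tilde{\alpha}}$ bound to $\mathfrak{h}$. Define
$$
\mathbf{a}\defeq\mathfrak{h}(0),\quad \mathbf{b}\defeq D\mathfrak{h}(0),\quad \mathrm{M}\defeq D^{2}\mathfrak{h}(0),\quad \mathfrak{p}(x)\defeq\mathbf{a}+\mathbf{b}\cdot x+\tfrac{1}{2}x^{t}\mathrm{M}x,
$$
so that $|\mathbf{a}|+\|\mathbf{b}\|+\|\mathrm{M}\|\le\mathrm{C}_{0}$ and the Taylor remainder produces $\sup_{\mathrm{B}^{+}_{\rho}}|\mathfrak{h}-\mathfrak{p}|\le\mathrm{C}_{0}\rho^{2+\tilde{\alpha}}$ for every $\rho\in(0,2/3]$. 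Fix $\rho\in(0,1/2]$ with $\mathrm{C}_{0}\rho^{\tilde{\alpha}}\le 1/2$ and set $\delta\defeq\rho^{2}/2$, which in turn fixes $\eta$ through Lemma~\ref{lemma4.4.3}; the triangle inequality then delivers
$$
\sup_{\mathrm{B}^{+}_{\rho}}|u-\mathfrak{p}|\le\sup_{\mathrm{B}^{+}_{\rho}}|u-\mathfrak{h}|+\sup_{\mathrm{B}^{+}_{\rho}}|\mathfrak{h}-\mathfrak{p}|\le\delta+\mathrm{C}_{0}\rho^{2+\tilde{\alpha}}\le\rho^{2}.
$$
Finally, since $\mathfrak{h}\in C^{2}$ up to $\mathrm{T}_{2/3}$, the equation $F(D^{2}\mathfrak{h}+\tilde{\mathrm{M}})=(f)_{1}$ holds pointwise at $0\in\mathrm{T}_{2/3}$, which yields $F(\mathrm{M}+\tilde{\mathrm{M}})=(f)_{1}$.

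The main technical hurdle lies in the second paragraph: Statement~\ref{RA} requires an equation of the form $F(D^{2}\cdot+\mathrm{M}^{\ast})=0$ with $F(\mathrm{M}^{\ast})=0$, whereas the approximation from Lemma~\ref{lemma4.4.3} carries the nonzero right-hand side $(f)_{1}$. Forcing the matching requires both the shift by $\mathrm{N}_{0}$ and the passage to the auxiliary operator $\hat F$; one must additionally confirm that $\tilde{g}$ retains $C^{1,\alpha}$ regularity with a universally controlled norm and that (RA) carries over to $\hat F$, the latter being immediate when (RA) follows from the convexity assumption on $F$, as noted in the Remark accompanying Statement~\ref{RA}.
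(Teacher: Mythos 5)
Your proposal is correct and takes essentially the same route as the paper: approximate $u$ by $\mathfrak{h}$ via Lemma \ref{lemma4.4.3}, obtain a universal $C^{2,\tilde{\alpha}}$ bound for $\mathfrak{h}$ from the (RA)-type estimate, take the second-order Taylor polynomial of $\mathfrak{h}$ at the origin, and fix $\rho$ and $\delta$ (hence $\eta$) exactly as in the paper, with the final identity $F(\mathrm{M}+\tilde{\mathrm{M}})=(f)_{1}$ read off from the classical equation at $0$. The only deviation is that you make explicit, via the shift $\mathrm{N}_{0}=t_{0}I$ and the operator $\hat{F}=F-(f)_{1}$ with adjusted boundary datum $\tilde{g}$, the reduction of the constant right-hand side $(f)_{1}$ to the homogeneous setting of Statement \ref{RA} — a step the paper simply asserts (with a reference to \cite{ET}) as an extension of (RA) to constant right-hand sides — and your use of (RA) for $\hat{F}$ is justified under the convexity remark accompanying Statement \ref{RA}.
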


\begin{proof}
First, we set $\delta>0$, which we will be choose later. By Approximation Lemma II \ref{lemma4.4.3}, we can consider $\mathfrak{h}$, the viscosity solution to
\begin{eqnarray*}
\left\{
\begin{array}{rclcl}
F(D^{2}\mathfrak{h}+\tilde{\mathrm{M}}) & = & (f)_{1} &\mbox{in}& \mathrm{B}^{+}_{\frac{7}{8}} \\
\beta(x)\cdot D\mathfrak{h}(x)+\gamma(x) \mathfrak{h}(x) & = & g(x) &\mbox{on}& \mathrm{T}_{\frac{7}{8}}\\
\mathfrak{h}= u
&\mbox{in}& \partial \mathrm{B}^{+}_{\frac{7}{8}}\setminus \mathrm{T}_{\frac{7}{8}},
\end{array}
\right.
\end{eqnarray*}
such that
\begin{eqnarray}\label{estimate1lemma4.4.4}
\sup_{\mathrm{B}^{+}_{\frac{7}{8}}}|u-\mathfrak{h}|\leq \delta.
\end{eqnarray}
We observe that Statement \eqref{RA} guarantees that the problem
\begin{eqnarray} \label{probobs}
\left\{
\begin{array}{rclcl}
F(D^{2}\mathfrak{h}+\mathrm{M}) & = & \mathrm{c} &\mbox{in}&   \mathrm{B}^{+}_{\frac{7}{8}} \\
\beta\cdot D\mathfrak{h}+\gamma \mathfrak{h} & = & g  &\mbox{on}&  \mathrm{T}_{\frac{7}{8}},
\end{array}
\right.
\end{eqnarray}
also admits $C^{2,\tilde{\alpha}}$ estimates with $\tilde{\mathrm{C}^{\ast}}$ depending only on $\mathrm{C}^{\ast}$ and $|\mathrm{c}|$, for any $\mathrm{M}\in Sym(n)$ such that $F(\mathrm{M})=\mathrm{c}$, for more details see \cite{ET}.

Thus, it follows that $\mathfrak{h}\in C^{2,\tilde{\alpha}}(\overline{\mathrm{B}^{+}_{\frac{2}{3}}})$, and
\begin{eqnarray}\label{estimate2lemma4.4.4}
\|\mathfrak{h}\|_{C^{2,\tilde{\alpha}}\left(\overline{\mathrm{B}^{+}_{\frac{2}{3}}}\right)} \leq\mathrm{C}=\mathrm{C}(n,\lambda,\Lambda,\mu_{0},\tilde{\alpha},\|\beta\|_{C^{0,\alpha} (\overline{\mathrm{T}_{1}})},\|\gamma\|_{C^{0,\alpha}(\overline{\mathrm{T}_{1}})}, \|g\|_{C^{0,\alpha}(\overline{\mathrm{T}_{1}})}).
\end{eqnarray}

Now, we define
$$
\mathbf{a}=\mathfrak{h}(0),\,\, \mathbf{b}=D\mathfrak{h}(0)\,\, \text{and} \,\,\mathrm{M}=D^{2}\mathfrak{h}(0), \quad \text{and} \quad. \mathfrak{p}(x)= \mathbf{a}+\mathbf{b}\cdot x+\frac{1}{2}x^{t}\mathrm{M}x.
$$
Thus, by \eqref{estimate2lemma4.4.4}. it follows that
\begin{eqnarray}\label{estimate3lemma4.4.4}
\sup_{\mathrm{B}^{+}_{r}}|\mathfrak{h}-\mathfrak{p}|\leq \mathrm{C}r^{2+\tilde{\alpha}}, \,\,\forall r\in \left(0, \frac{2}{3}\right).
\end{eqnarray}
In this point, we make the following universal choices of the constants
\begin{equation}\label{EqRadius_4}
  \rho\defeq \min\left\{ \left(\frac{1}{2\mathrm{C}}\right)^{\frac{1}{\tilde{\alpha}}}, \,\frac{1}{2}\right\} \quad \mbox{and} \quad  \delta \defeq \frac{1}{2}\rho^{2}.
\end{equation}
With such choices, the constant $\eta>0$ is determined due to Lemma \ref{lemma4.4.3}. Moreover, the universal bound of the constants $\mathbf{a}$, $\mathbf{b}$ and $\mathrm{M}$, it follows directly from \eqref{estimate2lemma4.4.4}.

Finally, from \eqref{estimate1lemma4.4.4}, \eqref{EqRadius_4} and \eqref{estimate3lemma4.4.4}, we obtain that
\begin{eqnarray*}
\sup_{\mathrm{B}^{+}_{\rho}}|u-\mathfrak{p}|&\leq& \sup_{\mathrm{B}^{+}_{\rho}}|u-\mathfrak{h}|+\sup_{\mathrm{B}^{+}_{\rho}}|\mathfrak{h}-\mathfrak{p}|\leq \sup_{\mathrm{B}^{+}_{\frac{7}{8}}}|u-\mathfrak{h}|+\sup_{ \mathrm{B}^{+}_{\rho}}|\mathfrak{h}-\mathfrak{p}|\leq \delta+\mathrm{C}\rho^{2+\tilde{\alpha}}\\
&\leq&\frac{1}{2}\rho^{2}+\left(\frac{1}{2\rho^{\tilde{\alpha}}}\right)\rho^{2+\tilde{\alpha}}=\frac{1}{2}\rho^{2}+\frac{1}{2}\rho^{2}=\rho^{2},
\end{eqnarray*}
thereby obtaining the desired estimate.
\end{proof}

Different from interior borderline estimates addressed by Teixeira in \cite{ET}, the scenario with oblique boundary conditions in \eqref{1} imposes a substantial challenge in dealing with the tangential derivative. Thus, to overcome such an obstacle, we must suppose a suitable behavior of the data.

Therefore, for the main theorem this section we need the following hypothesis:

\begin{itemize}
\item [({\bf A})]({\bf Regularity of the data}) We assume in the problem \eqref{coeficientesconstantes} that the source term $f$ belongs to $p-BMO(\mathrm{B}^{+}_{1})\cap L^{p}(\mathrm{B}^{+}_{1})\cap C^{ 0}(\mathrm{B}^{+}_{1})$ for $p\in [n-\varepsilon_{0},\infty)$.
\item[({\bf B})]({\bf Regularity of boundary terms})  Also we assume that $\beta,\gamma,g\in C^{1,\alpha}(\overline{\mathrm{T}_{1}})$ and there exist constants $\alpha_{\beta},\alpha_{\gamma}\in (0,\alpha]$ such that
\begin{eqnarray}\label{hypothesisunderbetaandgamma}
\sup_{x,z\in \mathrm{T}_{r}(y)\atop x\neq z}\frac{|D\beta(x)|}{|x-z|^{\alpha}}\leq \mathrm{O}(r^{-\alpha_{\beta}}) \quad \mbox{and} \quad \sup_{x,z\in \mathrm{T}_{r}(y)\atop x\neq z}\frac{|D\gamma(x)|}{|x-z|^{\alpha}} \leq \mathrm{O}(r^{-\alpha_{\gamma}}),\quad \forall y\in \mathrm{T}_{\frac{1}{2}}.
\end{eqnarray}
\end{itemize}

Finally, we are in a position to present the main result of this section.

\begin{theorem}[{\bf Regularity $C^{1,Log-Lip}$}]\label{theorem4.4.5}
Let $u$ be a viscosity solution for \eqref{coeficientesconstantes}. Suppose further the Statement \eqref{RA}, $({\bf A})$ and $({\bf B})$ hold. Then, $u\in C^{1,Log-Lip}\left(\overline{\mathrm{B}^{+}_{\frac{1}{2}}}\right)$. Moreover, the following estimate holds
\begin{eqnarray*}
\displaystyle \sup_{x,y\in \overline{\mathrm{B}^{+}_{\frac{1}{2}}} \atop{x \neq y}} \frac{|u(x)-u(y)-Du(y)\cdot(x-y)|}{|x-y|^{2}\ln|x-y|^{-1}} \leq \mathrm{C}\left(\|u\|_{L^{\infty}(\mathrm{B}^ {+}_{1})}+\|f\|_{p-BMO(\mathrm{B}^{+}_{1})}+\|g\|_{C^{1,\alpha}(\overline{\mathrm{T}_{1}})}\right),
\end{eqnarray*}
where $\mathrm{C}>0$ is a constant depending only on $n$, $\lambda$, $\Lambda$, $\mu_{0}$, $\alpha_{0}$, $\mathrm{C}_{\beta\gamma}$, $p$, $\mathrm{C}^{*}$, $\|\beta\|_{ C^{1,\alpha}(\overline{\mathrm{T}_{1}})}$ and $\|\gamma\|_{C^{1,\alpha}(\overline{\mathrm{T}_{1}})}$.
\end{theorem}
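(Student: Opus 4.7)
The plan is to mimic the iterative approximation scheme used in Theorems \ref{holderoptimal}, \ref{theorem4.2} and \ref{Holderoptimalgradientestimate}, now invoking the quadratic approximation device of Lemma \ref{lemma4.4.4} and accepting that at the borderline level the Hessian part will grow logarithmically rather than converge. After the usual normalization, we may assume $\|u\|_{L^{\infty}(\mathrm{B}_{1}^{+})}\le 1$, $\|g\|_{C^{1,\alpha}(\overline{\mathrm{T}_{1}})}\le 1$ and $\|f\|_{p-\textrm{BMO}(\mathrm{B}_{1}^{+})}\le \eta$, where $\eta$ is the constant of Lemma \ref{lemma4.4.4} (otherwise one rescales by an appropriate universal factor, exactly as in the proof of Theorem \ref{holderoptimal}).

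Fix $y\in \mathrm{T}_{1/2}$. I would prove by induction that there exist quadratic polynomials
$$
\mathfrak{p}_{k}(x)=\mathbf{a}_{k}+\mathbf{b}_{k}\cdot(x-y)+\tfrac{1}{2}(x-y)^{t}\mathrm{M}_{k}(x-y),
$$
and an auxiliary symmetric matrix $\tilde{\mathrm{M}}_{k}\in\mbox{Sym}(n)$ with $F(\tilde{\mathrm{M}}_{k})=(f)_{\mathrm{B}_{\rho^{k}}^{+}(y)}$, satisfying, for all $k\in\mathbb{N}$,
\begin{equation*}
\sup_{\mathrm{B}_{\rho^{k}}^{+}(y)}|u-\mathfrak{p}_{k}|\le \rho^{2k},\qquad |\mathbf{a}_{k+1}-\mathbf{a}_{k}|\le \mathrm{C}\rho^{2k},\qquad \|\mathbf{b}_{k+1}-\mathbf{b}_{k}\|\le \mathrm{C}\rho^{k},\qquad \|\mathrm{M}_{k+1}-\mathrm{M}_{k}\|\le \mathrm{C},
\end{equation*}
with $\rho$ and $\mathrm{C}$ the universal quantities of Lemma \ref{lemma4.4.4}. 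The induction step uses the rescaled function
$$
v_{k}(x)\defeq \rho^{-2k}\bigl(u-\mathfrak{p}_{k}\bigr)(y+\rho^{k}x),
$$
which by the induction hypothesis is normalized and satisfies in the viscosity sense
$$
F_{k}(D^{2}v_{k}+\mathrm{M}_{k}+\tilde{\mathrm{M}}_{k-1})=f_{k}\text{ in }\mathrm{B}_{1}^{+},\qquad \beta_{k}\cdot Dv_{k}+\gamma_{k}v_{k}=g_{k}\text{ on }\mathrm{T}_{1},
$$
where $F_{k}(\mathrm{X})\defeq F(\mathrm{X})$ (constant coefficients are preserved by dilation), $f_{k}(x)\defeq f(y+\rho^{k}x)$ (whose $p$-BMO norm is preserved under scaling, hence $\le \eta$), and the rescaled boundary data are $\beta_{k}(x)=\beta(y+\rho^{k}x)$, $\gamma_{k}(x)=\rho^{k}\gamma(y+\rho^{k}x)$, and
$$
g_{k}(x)\defeq \rho^{-k}\Bigl[g(y+\rho^{k}x)-\beta(y+\rho^{k}x)\cdot\bigl(\mathbf{b}_{k}+\rho^{k}\mathrm{M}_{k}x\bigr)-\gamma(y+\rho^{k}x)\,\mathfrak{p}_{k}(y+\rho^{k}x)\Bigr].
$$
Applying Lemma \ref{lemma4.4.4} to $v_{k}$ (with the previous Hessian $\mathrm{M}_{k}+\tilde{\mathrm{M}}_{k-1}$ playing the role of $\tilde{\mathrm{M}}$) yields a quadratic $\tilde{\mathfrak{p}}(x)=\tilde{\mathbf{a}}+\tilde{\mathbf{b}}\cdot x+\tfrac{1}{2}x^{t}\tilde{\mathrm{M}}x$ with universally bounded coefficients and $\sup_{\mathrm{B}_{\rho}^{+}}|v_{k}-\tilde{\mathfrak{p}}|\le \rho^{2}$. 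Rescaling back and setting
$$
\mathbf{a}_{k+1}=\mathbf{a}_{k}+\rho^{2k}\tilde{\mathbf{a}},\qquad \mathbf{b}_{k+1}=\mathbf{b}_{k}+\rho^{k}\tilde{\mathbf{b}},\qquad \mathrm{M}_{k+1}=\mathrm{M}_{k}+\tilde{\mathrm{M}},
$$
furnishes $\mathfrak{p}_{k+1}$ with the announced bounds, while $\tilde{\mathrm{M}}_{k}\defeq \mathrm{M}_{k+1}+\tilde{\mathrm{M}}_{k-1}$ satisfies $F(\tilde{\mathrm{M}}_{k})=(f)_{\mathrm{B}_{\rho^{k+1}}^{+}(y)}$ by the last assertion of Lemma \ref{lemma4.4.4}.

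The crucial verification, and the main obstacle, is that $g_{k}$ remains uniformly bounded in $C^{1,\alpha}(\overline{\mathrm{T}_{1}})$: the telescoping estimates force $\|\mathbf{b}_{k}\|\le \mathrm{C}k$ and $\|\mathrm{M}_{k}\|\le \mathrm{C}k$, so blind rescaling would lose control. This is precisely where hypothesis $(\mathbf{B})$ enters: the annular decay estimates \eqref{hypothesisunderbetaandgamma} on $D\beta$ and $D\gamma$ supply extra powers of $\rho^{k\alpha_{\beta}}$ and $\rho^{k\alpha_{\gamma}}$ when computing the $C^{0,\alpha}$ seminorm of $Dg_{k}$, which absorb the logarithmic blow-up of $\mathbf{b}_{k}$ and $\mathrm{M}_{k}$. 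Once $\|g_{k}\|_{C^{1,\alpha}(\overline{\mathrm{T}_{1}})}$ is controlled by a universal constant (entering the $\mathrm{C}$ in Lemma \ref{lemma4.4.4}), the iteration closes.

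With the sequences in hand, $(\mathbf{a}_{k})$ and $(\mathbf{b}_{k})$ are Cauchy with limits $\mathbf{a}_{\infty}=u(y)$ and $\mathbf{b}_{\infty}=Du(y)$, the latter identified by the limit of the affine parts (as in the proof of Theorem \ref{Holderoptimalgradientestimate}). Given $r\in(0,\rho)$, pick $k$ with $\rho^{k+1}<r\le \rho^{k}$ and estimate
\begin{equation*}
\sup_{x\in\mathrm{B}_{r}^{+}(y)}\!\bigl|u(x)-u(y)-Du(y)\cdot(x-y)\bigr|\le \rho^{2k}+\|\mathbf{b}_{k}-\mathbf{b}_{\infty}\|\,r+\tfrac{1}{2}\|\mathrm{M}_{k}\|\,r^{2}\le \mathrm{C}\,k\,r^{2}\le -\mathrm{C}\,r^{2}\ln r,
\end{equation*}
using $\|\mathbf{b}_{k}-\mathbf{b}_{\infty}\|\le \mathrm{C}\rho^{k}/(1-\rho)$, $\|\mathrm{M}_{k}\|\le \mathrm{C}k$, and $k\le \ln(r)/\ln(\rho)$. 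Combining this boundary estimate with the interior $C^{1,\text{Log-Lip}}$ estimate from \cite[Theorem 1]{ET} in a standard covering argument (exactly as in Theorems \ref{holderoptimal} and \ref{theorem4.2}) yields the claimed $C^{1,\text{Log-Lip}}\bigl(\overline{\mathrm{B}_{1/2}^{+}}\bigr)$ bound, and the quantitative norm estimate follows by undoing the initial normalization.
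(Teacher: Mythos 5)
Your proposal follows essentially the same route as the paper's proof: after the standard normalization you iterate the quadratic approximation of Lemma \ref{lemma4.4.4} at a fixed boundary point $y\in\mathrm{T}_{1/2}$, use hypothesis $(\mathbf{B})$ to keep the rescaled boundary datum $g_{k}$ controlled in $C^{1,\alpha}$ despite the growth $\|\mathbf{b}_{k}\|,\|\mathrm{M}_{k}\|\le \mathrm{C}k$, and then convert the dyadic decay $\sup_{\mathrm{B}^{+}_{\rho^{k}}(y)}|u-\mathfrak{p}_{k}|\le\rho^{2k}$ together with the Cauchy/telescoping bounds into the $-\mathrm{C}r^{2}\ln r$ modulus. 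The only differences are bookkeeping ones (you track compatibility conditions of the form $F(\tilde{\mathrm{M}}_{k})=(f)_{\mathrm{B}^{+}_{\rho^{k}}(y)}$ over shrinking balls, whereas the paper states $F(\mathrm{M}_{k})=(f)_{1}$), which do not change the argument.
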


\begin{proof}
We can assume, without loss of generality, that
$$
\|u\|_{L^{\infty}(\mathrm{B}^{+}_{1})}\leq 1,\,\,  \|f\|_{p-\text{BMO}(\mathrm{B}^{+}_{1})}\leq \eta \quad \text{and}  \quad \|g\|_{C^{1,\alpha}(\overline{\mathrm{T}_{1}})}\leq 1,
$$
where $\eta>0$ is the constant from Lemma  \ref{lemma4.4.4}. Now, fixed $y\in\mathrm{T}_{\frac{1}{2}}$, we assert that there exists a sequence of quadratic polynomials $(\mathfrak{p}_{k})_{k\in\mathbb{N} }$ of the form $\mathfrak{p}_{k}(x)=\mathbf{a}_{k}+\mathbf{b}_{k}\cdot (x-y)+\frac{1}{2}(x-y)^{t}\mathrm{M}_{k}(x-y) $ satisfying the following properties:
\begin{enumerate}
\item [(i)] $F(\mathrm{M}_{k})=(f)_{1}$,
\item[(ii)] $\displaystyle\sup_{\mathrm{B}^{+}_{\rho^{k}}(y)}|u-\mathfrak{p}_{k}|\leq \rho^{2k }$,
\item[(iii)] $|\mathbf{a}_{k-1}-\mathbf{a}_{k}|+\rho^{k-1}|\mathbf{b}_{k-1}-\mathbf{b}_{k}|+\rho^{2( k-1)}|\mathrm{M}_{k-1}-\mathrm{M}_{k}|\leq \mathrm{C}\rho^{2(k-1)}$,
\end{enumerate}
for all $k\geq 0$, where $\mathfrak{p}_{-1}=\mathfrak{p}_{0}=\dfrac{1}{2}(x-y)^{t}\mathrm{M}_{0}(x-y)$ for $\mathrm{M}_{0 }\in Sym(n)$ in  such a way that $F(\mathrm{M}_{0})=(f)_{1}$ and $\rho$ is the radius coming from Lemma \ref{lemma4.4.4}.

We will prove such claim via induction on $k$. Note that, the first case, i.e.  $k=0$, it  is clearly satisfied. Now, we assume that the statement holds for some $k$, and we define the following auxiliary function:
\begin{eqnarray*}
v_{k}(x) \defeq \frac{(u-\mathfrak{p}_{k})(y+\rho^{k}x)}{\rho^{2k}}, \ x\in \mathrm{B}^{+}_{1}\cup\mathrm{T}_{1}.
\end{eqnarray*}
Thus, it is easy to check that $v_{k}$ is a viscosity solution of
\begin{eqnarray*}
\left\{
\begin{array}{rclcl}
F(D^{2}v_{k}+\mathrm{M}_{k}) & = & f_{k}(x) &\mbox{in}& \mathrm{B}^{+}_{1} \\
\beta_{k}(x)\cdot Dv_{k}(x)+\gamma_{k}(x) v_{k}(x) & = & g_{k}(x) &\mbox{on}& \mathrm{T}_{1},
\end{array}
\right.
\end{eqnarray*}
where
$$
\left\{
\begin{array}{rcl}
  f_{k}(x) & \defeq & f(y+\rho^{k}x) \\
  \beta_{k}(x) & \defeq & \beta(y+\rho^{k}x) \\
  \gamma_{k}(x) & \defeq & \rho^{k}\gamma(y+\rho^{k}x) \\
  g_{k}(x) & \defeq &\rho^{-k}( g(y+\rho^{k}x)-\beta(y+\rho^{k}x)\cdot D\mathfrak{p}_{k}( y+\rho^{k}x)-\gamma(y+\rho^{k}x)\mathfrak{p}_{k}(y+\rho^{k}x))
\end{array}
\right.
$$

Now, note that, by the induction hypothesis, it follows from (ii)  that $\|v_{k}\|_{L^{\infty}(\mathrm{B}^{+}_{1})}\leq 1 $. Moreover, by the definition of $f_{k}$, we have
\begin{eqnarray*}
\|f_{k}\|_{p-BMO(\mathrm{B}^{+}_{1})}&=&\sup_{x_{0}\in \Omega, r>0} \left (\intav{\mathrm{B}_{r}(x_0) \cap \mathrm{B}^{+}_{1}} |f_{k}(x) - (f_{k})_{x_0 , \rho}|^p dx\right)^{\frac{1}{p}}\\
&=& \sup_{x_{0}\in \mathrm{B}^{+}_{1}, r>0} \left(\intav{\mathrm{B}_{r\rho^{k} }(y+\rho^{k}x_{0}) \cap \Omega} |f(z) - (f)_{y+\rho^{k}x_0, r\rho^{k}}|^p dx\right)^{\frac{1}{p}}.
\end{eqnarray*}
As a result,
\begin{eqnarray}
\|f_{k}\|_{p-\text{BMO}(\mathrm{B}^{+}_{1})}\leq\|f\|_{p-\text{BMO}(\mathrm{B}^{+ }_{1})}\leq \eta.
\end{eqnarray}

Additionally, we observe that $\beta_{k},\gamma_{k}\in C^{1,\alpha}(\overline{\mathrm{T}_{1}})$, since $\beta,\gamma\in C^{1,\alpha}(\overline{\mathrm{T}_{1}})$  (by the hypothesis $({\bf B})$) and $\rho\in\left(0,\frac{1}{2}\right]$. On other the hand, for each $i=1,\cdots, n$ we have that
\begin{eqnarray}\label{estimateofgk}
[D_{i}g_{k}]_{0,\alpha,\mathrm{T}_{1}}&\leq&[D_{i}g]_{0,\alpha, \mathrm{T}_{\rho^{k}}(y)}+[D_{i}\beta(y+\rho^{k}\cdot)\cdot D\mathfrak{p}_{k}(y+\rho^{k}\cdot)]_{0,\alpha,\mathrm{T}_{1}}+\nonumber\\
&+&[\beta(y+\rho^{k}\cdot)\cdot D_{i}(D\mathfrak{p}_{k}(y+\rho^{k}\cdot))]_{0,\alpha,\mathrm{T}_{1}}+[D_{i}\gamma(y+\rho^{k}\cdot)\mathfrak{p}_{k}(y+\rho^{k}\cdot)]_{0,\alpha,\mathrm{T}_{1}}+\nonumber\\
&+&[\gamma(y+\rho^{k}\cdot)D_{i}\mathfrak{p}_{k}(y+\rho^{k}\cdot)]_{0,\alpha,\mathrm{T}_{1}}.
\end{eqnarray}

Next, we will analyze each term in the right-hand of \eqref{estimateofgk} separately. In effect, by $g\in C^{1,\alpha}(\overline{\mathrm{T}_{1}})$, it follows that
$$
[D_{i}g]_{0,\alpha, \mathrm{T}_{\rho^{k}}(y)}\leq \|g\|_{C^{1,\alpha}(\overline{\mathrm{T}_{1}})}<\infty.
$$
Furthermore,
\begin{eqnarray*}
[D_{i}\beta(y+\rho^{k}\cdot)\cdot D\mathfrak{p}_{k}(y+\rho^{k}\cdot)]_{0,\alpha,\mathrm{T}_{1}}&\leq& [D_{i}\beta]_{0,\alpha,\mathrm{T}_{\rho^{k}}(y)}\left(\rho^{k\alpha}\|D\mathfrak{p}_{k}\|_{L^{\infty}(\mathrm{T}_{\rho^{k}}(y))}\right)+\\
&+&2\|D\mathfrak{p}_{k}\|_{L^{\infty}(\mathrm{T}_{\rho^{k}}(y))}\sup_{x,z\in \mathrm{T}_{1}\atop x\neq z}\frac{|D_{i}\beta(y+\rho^{k}z)|}{|x-z|^{\alpha}}\\
&=&[D_{i}\beta]_{0,\alpha,\mathrm{T}_{\rho^{k}}(y)}\left(\rho^{k\alpha}\|D\mathfrak{p}_{k}\|_{L^{\infty}(\mathrm{T}_{\rho^{k}}(y))}\right)+\\
&+&2\|D\mathfrak{p}_{k}\|_{L^{\infty}(\mathrm{T}_{\rho^{k}}(y))}\rho^{k\alpha}\sup_{x,z\in \mathrm{T}_{\rho^{k}}(y)\atop \overline{x} \neq \overline{z}}\frac{|D_{i}\beta(\overline{z})|}{|\overline{x}-\overline{z}|^{\alpha}}\\
&<&\infty,
\end{eqnarray*}
since, by induction hypothesis, see item $(iii)$,
$$
\|D\mathfrak{p}_{k}\|_{L^{\infty}(\mathrm{T}_{\rho^{k}}(y))}\leq \frac{1}{1-\rho}\mathrm{C}+\mathrm{C}\text{o}(k)\quad  \text{as} \quad k\to \infty
$$
and
$$
\displaystyle\sup_{x,z\in \mathrm{T}_{\rho^{k}}(y)\atop \overline{x} \neq \overline{z}}\frac{|D_{i}\beta(\overline{z})|}{|\overline{x}-\overline{z}|^{\alpha}}\leq \mathrm{C}_{\beta}\rho^{-k\alpha_{\beta}} \quad \mbox{for} \quad k\gg 1
$$
by \eqref{hypothesisunderbetaandgamma}.

Similarly, we obtain
\begin{eqnarray*}
[\beta(y+\rho^{k}\cdot)\cdot D_{i}(D\mathfrak{p}_{k}(y+\rho^{k}\cdot))]_{0,\alpha,\mathrm{T}_{1}}\leq \rho^{k\alpha}[\beta]_{0,\alpha,\mathrm{T}_{\rho^{k}}(y)}\|\mathrm{M}_{k}\| \leq 2\|\beta\|_{C^{1,\alpha}(\overline{ \mathrm{T}_{1}})}\text{o}(k), \ \mbox{as} \ k\to \infty,
\end{eqnarray*}
since by item (iii), it follows that $\|\mathrm{M}_{k}\|\leq \mathrm{C}k+\|\mathrm{M}_{0}\|$.

Next, we analyze third term on the right side in \eqref{estimateofgk},
\begin{eqnarray*}
[D_{i}\gamma(y+\rho^{k}\cdot)\mathfrak{p}_{k}(y+\rho^{k}\cdot)]_{0,\alpha,\mathrm{T}_{1}}&\leq&\rho^{k\alpha}[D_{i}\gamma]_{0,\alpha,\mathrm{T}_{\rho^{k}}(y)}\|\mathfrak{p}_{k}\|_{L^{\infty}(\mathrm{T}_{\rho^{k}}(y))}+\\
&+&2\|\mathfrak{p}_{k}\|_{L^{\infty}(\mathrm{T}_{\rho^{k}}(y))}\sup_{x,z\in \mathrm{T}_{1}\atop x\neq z}\frac{|D_{i}\gamma(y+\rho^{k}x)|}{|x-z|^{\alpha}}\\
&\leq&\|\gamma\|_{C^{1,\alpha}(\overline{ \mathrm{T}_{1}})}+2\mathrm{C}_{\gamma}\rho^{k(\alpha-\alpha_{\gamma})}\quad \mbox{for}\quad k\gg 1 \quad \mbox{(by \eqref{hypothesisunderbetaandgamma})}.
\end{eqnarray*}
And finally,
\begin{eqnarray*}
[\gamma(y+\rho^{k}\cdot)D_{i}\mathfrak{p}_{k}(y+\rho^{k}\cdot)]_{0,\alpha,\mathrm{T}_{1}}&\leq&\rho^{k\alpha}[\gamma]_{0,\alpha,\mathrm{T}_{\rho^{k}}(y)}\|D_{i}\mathfrak{p}_{k}\|_{L^{\infty}(\mathrm{T}_{\rho^{k}}(y))}+\\
&+&\rho^{k(1-\alpha)}\|\gamma\|_{C^{1,\alpha}(\overline{\mathrm{T}_{1}})}\mathrm{C}(n)\|\mathrm{M}_{k}\|\\
&\leq& 2\|\gamma\|_{C^{1,\alpha}(\overline{ \mathrm{T}_{1}})}\|D\mathfrak{p}_{k}\|_{L^{\infty}(\mathrm{T}_{\rho^{k}}(y))}+\\
&+&\|\gamma\|_{C^{1,\alpha}(\overline {\mathrm{T}_{1}})}\mathrm{C}(n)\text{o}(k), \ \mbox{as} \ k\to \infty.
\end{eqnarray*}

Therefore, $g_{k}\in C^{1,\alpha}(\overline{ \mathrm{T}_{1}})$. Hence, we are under the hypothesis of Lemma \ref{lemma4.4.4}, and thus, there exists a quadratic polynomial $\tilde{\mathfrak{p}}$ of the form $\tilde{\mathfrak{p}}(x)=\tilde{\mathbf{a}}+\tilde{\mathbf{b}} \cdot x+\frac{1}{2}x^{t}\tilde{\mathrm{M}}x$ such that
$$
|\tilde{\mathbf{a}}| + \|\tilde{\mathbf{b}}\| + \|\tilde{\mathrm{M}}\|\le  \mathrm{C}(\verb"universal")
$$
and
\begin{eqnarray}\label{estimate1theorem4.4.5}
\sup_{\mathrm{B}^{+}_{\rho}}|v_{k}-\tilde{\mathfrak{p}}|\leq\rho^{2}
\end{eqnarray}

Now, by defining
$$
\mathbf{a}_{k+1}=\mathbf{a}_{k}+\rho^{2k}\tilde{\mathbf{a}},\,\, \mathbf{b}_{k+1}=\mathbf{b}_{k}+\rho^{k}\tilde {\mathbf{b}} \quad \text{and} \quad \mathrm{M}_{k+1}=\mathrm{M}_{k}+\tilde{\mathrm{M}},
$$
and
$$
\mathfrak{p}_{k+1}(x) \defeq \mathbf{a}_{k+1} + \mathbf{b}_{k+1}\cdot x + \frac{1}{2}x^t\cdot \mathrm{M}_{k+1}\cdot x.
$$
Then,  by \eqref{estimate1theorem4.4.5}, it follows that
\begin{eqnarray*}
\sup_{\mathrm{B}^{+}_{\rho^{k+1}}(y)}|u-\mathfrak{p}_{k+1}|\leq \rho^{2(k+1)},
\end{eqnarray*}
which establishes the condition (ii) for $k+1$. Furthermore, note that condition (i), it is also guaranteed by Lemma \ref{lemma4.4.4} (see, \textit{e.g.} last statement). Finally,
\begin{eqnarray*}
|\mathbf{a}_{k}-\mathbf{a}_{k+1}|+\rho^{k}|\mathbf{b}_{k}-\mathbf{b}_{k+1}|+\rho^{2k}|\mathrm{M}_{k}-\mathrm{M}_{k+ 1}|\leq \rho^{2k}|\tilde{\mathbf{a}}|+\rho^{k}\rho^{k}|\tilde{\mathbf{b}}|+\rho^{2k}|\tilde{\mathrm{M}} |\leq \mathrm{C}\rho^{2k},
\end{eqnarray*}
thereby guaranteeing the condition (iii) for $k+1$.  This completes  the proof of statement via induction.

Now, note that condition (iii) ensures that the sequences $(\mathbf{a}_{k})_{k\in\mathbb{N}}$ and $(\mathbf{b}_{k})_{k\in\mathbb{N }}$ are Cauchy. Thus, we may consider
$$
\displaystyle \mathbf{a}_{\infty}=\lim_{k \to \infty} \mathbf{a}_{k} \quad \text{and} \quad \mathbf{b}_{\infty}=\lim_{k \to \infty} \mathbf{b}_ {k}.
$$
Moreover, it is easy to see $\mathbf{a}_{\infty}=u(y)$.

On the other hand, from condition (iii), we have the following rate of convergence of the sequences $(\mathbf{a}_{k})$ and $(\mathbf{b}_{k})$
\begin{eqnarray}\label{estimate2theorem4.4.5}
|u(y)-\mathbf{a}_{k}|\leq \frac{\mathrm{C}}{1-\rho^{2}}\rho^{2k} \quad \mbox{and} \quad \|\mathbf{b}_{ \infty}-\mathbf{b}_{k}\|\leq\frac{\mathrm{C}}{1-\rho}\rho^{k}
\end{eqnarray}
for all $k\in\mathbb{N}$. Furthermore, although we have no guarantee of convergence of the sequence $(\mathrm{M}_{k})_{k\in\mathbb{N}}$, observe that condition (iii) still ensures that
\begin{eqnarray}\label{estimate3theorem4.4.5}
\|\mathrm{M}_{k}\|\leq \mathrm{C}k \quad \text{for all} \,\,\,k\in\mathbb{N}.
\end{eqnarray}
Therefore, fixing $r\in(0,\rho)$ (thus $\rho\leq {1/2} <\sqrt{1/e}$), we can choose $k\in\mathbb{N}$ in such a way that $\rho^{k+1}<r\leq\rho^{k}$. Thus, by (ii), \eqref{estimate2theorem4.4.5} and \eqref{estimate3theorem4.4.5}, we get that
\begin{eqnarray*}
\sup_{x\in\mathrm{B}^{+}_{r}(y)}|u(x)-u(y)-b_{\infty}\cdot(x-y)|&\leq&\sup_{x \in \mathrm{B}^{+}_{r}(y)}|u-\mathfrak{p}_{k}|+|u(y)-\mathbf{a}_{k}|+\nonumber\\
&+&\sup_{x\in\mathrm{B}^{+}_{r}(y)}|(\mathbf{b}_{k}-\mathbf{b}_{\infty})\cdot(x-y)|+\sup_{x \in \mathrm{B}^{+}_{r}}|\mathrm{M}_{k}(x-y)\cdot (x-y)|\nonumber\\
&\leq&\sup_{\mathrm{B}^{+}_{\rho^{k}}(y)}|u-\mathbf{p}_{k}|+\frac{\mathrm{C}}{1-\rho^{2 }}\rho^{2k}+\|\mathbf{b}_{k}-\mathbf{b}_{\infty}\|r+\|\mathrm{M}_{k}\|r^{2}\nonumber\\
&\leq& \rho^{2k}+\frac{\mathrm{C}}{1-\rho^{2}}\rho^{2k}+\frac{\mathrm{C}}{1-\rho }\rho^{k}r+\mathrm{C}kr^{2}\nonumber
\end{eqnarray*}
Finally, by using $r\le \rho^{k}$, it follows that
\begin{eqnarray}\label{estimate4theorem4.4.5}
\sup_{x\in\mathrm{B}^{+}_{r}(y)}|u(x)-u(y)-\mathbf{b}_{\infty}\cdot(x-y)|&\leq&\rho^{ 2k}+\frac{\mathrm{C}}{1-\rho^{2}}\rho^{2k}+\frac{\mathrm{C}}{1-\rho}\rho^{2k} +\mathrm{C}k\rho^{2k}\nonumber\\
&\leq&\left(1+\frac{\mathrm{C}}{1-\rho^{2}}+\frac{\mathrm{C}}{1-\rho}\right)\rho^{ 2k}+\mathrm{C}k\rho^{2k}\nonumber\\
&\leq& \mathrm{C}(\rho^{2k}+k\rho^{2k})=\frac{\mathrm{C}}{\rho^{2}}\left(\frac{1} {k}+1\right)k\rho^{2(k+1)}\nonumber\\
&\leq&\mathrm{C}k\rho^{2(k+1)}\leq -\mathrm{C}r^{2}\ln(r).
\end{eqnarray}

Moreover, as $y\in\mathrm{T}_{\frac{1}{2}}$ is arbitrary, 
it follows from \eqref{estimate4theorem4.4.5} that $u\in C^{1,Log-Lip}\left(\overline{\mathrm{B}^{+}_{\frac{1}{2}}}\right)$
with the desired estimate.
\end{proof}

\subsection*{p-BMO estimates for Hessian of solutions}

In this final part, we will revisit the previous theorem and, to some extent improve it, by assuming a sort of $W^{2, p}$ estimates for solutions of certain class fully nonlinear models of oblique boundary problems.

In effect, following the ideas of Theorem \ref{theorem4.4.5} and \cite[Theorem 1.4]{Bessa}, we can obtain $p$-BMO estimates for the Hessian of solutions to the problem \eqref{coeficientesconstantes} under suitable \textit{a priori} estimates. More precisely, we say that the problem
\begin{equation}\label{EqW2,p}
\left\{
\begin{array}{rclcl}
F(D^{2}\mathfrak{h}) & = & f(x) &\mbox{in}&   \mathrm{B}^{+}_{1} \\
\beta\cdot D\mathfrak{h} & = & g_{0}(x)  &\mbox{on}&  \mathrm{T}_{1},
\end{array}
\right.
\end{equation}
\textbf{enjoys $W^{2,p}$ estimates}, when $f\in L^{p}(\mathrm{B}^{+}_{1})$ (for some $n\leq p<\infty$) and $g_{0}\in C^{1 ,\alpha}(\overline{\mathrm{T}_{1}})$ (for some $\alpha\in(0,1)$), we have that $\mathfrak{h}\in W^{2,p}\left(\overline{\mathrm{B}^{+}_{\frac{1}{2}}}\right)$ with the following estimate
\begin{eqnarray*}
\|\mathfrak{h}\|_{W^{2,p}\left(\overline{\mathrm{B}^{+}_{\frac{1}{2}}}\right)}\leq\mathrm{C}(\verb"universal")\left(\|\mathfrak{h}\|_{L^{\infty}\left(\mathrm{B}^{+}_{1}\right)}+\|f\|_{L^{p}(\mathrm{B}^{+}_{1})}+\|g\|_{C^{1,\alpha}\left(\overline{\mathrm{T}_{1}}\right)}\right).
\end{eqnarray*}

We must highlight that $W^{2,p}$ estimates are available, for instance, for the class of convex operators (see \cite{BH20}) and asymptotically convex operators (see \cite[Proposition 3.4]{Bessa} for more details).

Taking these conditions into account, we have the following result.

\begin{theorem}[{\bf p-BMO estimates for Hessian}]

Let $u$ be a viscosity solution for \eqref{coeficientesconstantes}. Assume the conditions of Theorem \ref{theorem4.4.5} are in force, with $n\leq p<\infty$ and $\gamma=0$. Further suppose that the problem \eqref{coeficientesconstantes} enjoys $W^{2,p}$ estimates. Then, $D^{2}u\in \mbox{p-BMO}\left(\overline{\mathrm{B}^{+}_{\frac{1}{2}}}\right)$ with the following estimate
\begin{eqnarray*}
\|D^2 u\|_{p-\textrm{BMO}\left(\overline{\mathrm{B}^{+}_{\frac{1}{2}}}\right)} \le \mathrm{C}\left(\|u\|_{L^{\infty}(\mathrm{B}^+_1)} +\|f\|_{\textrm{p-BMO}(\mathrm{B}^+_1)}+\|g\|_{C^{1, \alpha}(\overline{\mathrm{T}_1})} \right),
\end{eqnarray*}
where $\mathrm{C}>0$ depends only on $n$, $\lambda$,  $\Lambda$, $p$,  $\mu_0$, $\mathrm{C}_{\beta\gamma}$, $p$, $\mathrm{C}^{*}$, $\|\beta\|_{C^{1, \alpha}(\overline{\mathrm{T}_1})}$, $\|\gamma\|_{C^{1, \alpha}(\overline{\mathrm{T}_1})}$ and $W^{2,p}$ a priori regularity estimates.
\end{theorem}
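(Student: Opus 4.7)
The strategy is to recycle the quadratic approximation scheme from Theorem \ref{theorem4.4.5} and feed it into the assumed $W^{2,p}$ a priori estimate. Fix $y\in\mathrm{T}_{\frac{1}{2}}$ and let $(\mathfrak{p}_k)_{k\ge 0}$ be the quadratic polynomials $\mathfrak{p}_k(x)=\mathbf{a}_k+\mathbf{b}_k\cdot(x-y)+\tfrac{1}{2}(x-y)^{t}\mathrm{M}_k(x-y)$ produced there, enjoying $F(\mathrm{M}_k)=(f)_1$, $\sup_{\mathrm{B}^{+}_{\rho^k}(y)}|u-\mathfrak{p}_k|\le \rho^{2k}$, and $\|\mathrm{M}_{k+1}-\mathrm{M}_k\|\le \mathrm{C}$. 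Although the sequence $(\mathrm{M}_k)$ need not converge (only $\|\mathrm{M}_k\|\le \mathrm{C} k$), these polynomials provide an ``almost-Hessian'' of $u$ at scale $\rho^k$---precisely the information encoded by $p$-BMO.

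At the $k$-th step, I introduce the rescaling $v_k(x)\defeq\rho^{-2k}(u-\mathfrak{p}_k)(y+\rho^k x)$. As in the proof of Theorem \ref{theorem4.4.5}, $v_k$ is a viscosity solution of
\begin{eqnarray*}
\left\{
\begin{array}{rclcl}
F(D^2 v_k+\mathrm{M}_k) & = & f_k(x) &\mbox{in}& \mathrm{B}^{+}_{1} \\
\beta_k(x)\cdot Dv_k(x) & = & g_k(x) &\mbox{on}& \mathrm{T}_{1},
\end{array}
\right.
\end{eqnarray*}
with $\|v_k\|_{L^\infty(\mathrm{B}^{+}_1)}\le 1$, $\|f_k\|_{L^p(\mathrm{B}^{+}_1)}\le \mathrm{C}$ (since $p\ge n$ and $\|f\|_{p-\textrm{BMO}(\mathrm{B}^{+}_1)}\le \eta$ controls the $L^p$ norm), and $\|g_k\|_{C^{1,\alpha}(\overline{\mathrm{T}_1})}\le \mathrm{C}$ (even simpler than in Theorem \ref{theorem4.4.5} since $\gamma\equiv 0$). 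Writing $\widetilde{F}(\mathrm{N})\defeq F(\mathrm{N}+\mathrm{M}_k)$ turns this into an oblique problem of the form \eqref{EqW2,p} governed by a $(\lambda,\Lambda)$-elliptic operator, for which the assumed $W^{2,p}$ a priori estimates deliver
\begin{eqnarray*}
\left(\int_{\mathrm{B}^{+}_{\frac{1}{2}}}\|D^2 v_k\|^p\,dx\right)^{\frac{1}{p}}\le \mathrm{C}.
\end{eqnarray*}
Since $D^2 v_k(x)=D^2 u(y+\rho^k x)-\mathrm{M}_k$, a change of variables produces the dyadic BMO-type bound
\begin{eqnarray*}
\intav{\mathrm{B}^{+}_{\rho^k/2}(y)}\|D^2 u(z)-\mathrm{M}_k\|^p\,dz\le \mathrm{C},\quad \forall\,k\in\mathbb{N}.
\end{eqnarray*}

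To upgrade this into a genuine $p$-BMO bound on $\overline{\mathrm{B}^{+}_{1/2}}$, I split an arbitrary ball $\mathrm{B}_r(x_0)\cap\mathrm{B}^{+}_{1/2}$ into two geometric regimes. If $\dist(x_0,\mathrm{T}_1)\ge K r$ for a large universal $K$, the ball is truly interior and the classical interior $p$-BMO estimate for the Hessian in fully nonlinear equations (Teixeira \cite{ET}, Bessa et al.\ \cite{Bessa}) applies directly. Otherwise, I select the closest $y\in\mathrm{T}_{1/2}$ to $x_0$ and $k\in\mathbb{N}$ with $\rho^{k+1}<r\le\rho^k$; then $\mathrm{B}_r(x_0)\cap\mathrm{B}^{+}_{1/2}\subset \mathrm{B}^{+}_{C\rho^k}(y)$ up to a controlled enlargement, and the triangle inequality against the constant matrix $\mathrm{M}_k$ combined with the boundary dyadic estimate yields
\begin{eqnarray*}
\intav{\mathrm{B}_r(x_0)\cap\mathrm{B}^{+}_{1/2}}\|D^2 u-(D^2 u)_{\mathrm{B}_r(x_0)\cap\mathrm{B}^{+}_{1/2}}\|^p\,dz\le \mathrm{C}.
\end{eqnarray*}

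The main obstacle is this final splitting argument: making the interior and boundary estimates mesh with matching universal constants across all admissible balls, so that the supremum defining $\|D^2 u\|_{p-\textrm{BMO}}$ is finite. A subsidiary technical point is that the $W^{2,p}$ assumption is phrased for equations of the form \eqref{EqW2,p}, so one must translate $F$ by $\mathrm{M}_k$ and verify that the resulting constants do not blow up with $k$; this is ensured by the universality of the $W^{2,p}$ estimate for $(\lambda,\Lambda)$-elliptic operators, exactly in the spirit of \cite[Proposition 3.4]{Bessa}, which closes the argument uniformly in $k$.
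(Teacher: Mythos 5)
Your proposal follows essentially the same route as the paper's proof: rescale $u$ by the quadratic polynomials from Theorem \ref{theorem4.4.5}, apply the assumed $W^{2,p}$ a priori estimates to the translated problems $F(D^{2}v_{k}+\mathrm{M}_{k})=f_{k}$ with $\beta_k\cdot Dv_k=g_k$, change variables to obtain $\intav{\mathrm{B}^{+}_{\rho^{k}}}|D^{2}u-\mathrm{M}_{k}|^{p}\,dz\le \mathrm{C}$, and then replace $\mathrm{M}_{k}$ by the integral average of $D^2u$ via the triangle inequality. The only deviation is that the paper measures the $p$-BMO seminorm over half-balls centered at the boundary point (the displayed supremum is over $\mathrm{B}^{+}_{r}$ at the origin), whereas you additionally patch in the interior Hessian estimates to handle balls with arbitrary centers — a completeness refinement of the same argument, not a different one.
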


\begin{proof}
By Theorem \ref{theorem4.4.5}, we observe that there is a sequence of polynomials $(\mathfrak{p}_{k})_{k\in\mathbb{N}}$ of the form $\mathfrak{p}_{k}(x)=\mathbf{a}_{k}+\mathbf{b}_{k}\cdot x+\frac{1}{2}x^{t}\mathrm{M}_{k}x $ such that
\begin{enumerate}
\item [(i)] $F(\mathrm{M}_{k})=(f)_{1}$,
\item[(ii)] $\displaystyle\sup_{\mathrm{B}^{+}_{\rho^{k}}(y)}|u-\mathfrak{p}_{k}|\leq \rho^{2k}$,
\item[(iii)] $|\mathbf{a}_{k-1}-\mathbf{a}_{k}|+\rho^{k-1}|\mathbf{b}_{k-1}-\mathbf{b}_{k}|+\rho^{2( k-1)}|\mathrm{M}_{k-1}-\mathrm{M}_{k}|\leq \mathrm{C}\rho^{2(k-1)}$,
\end{enumerate}
for all $k\geq 0$, where $\mathfrak{p}_{-1}=\mathfrak{p}_{0}=\dfrac{1}{2}x^{t}\mathrm{M}_{0}x$ for $\mathrm{M}_{0}\in Sym(n)$ in  such a way that $F(\mathrm{M}_{0})=(f)_{1}$, and $\rho$ is the radius coming from Lemma \ref{lemma4.4.4}. Now, the auxiliary function
\begin{eqnarray*}
v_{k}(x)=\frac{(u-\mathfrak{p}_{k})(\rho^{k}x)}{\rho^{2k}}
\end{eqnarray*}
is a normalized (by (ii)) viscosity solution of
\begin{eqnarray*}
\left\{
\begin{array}{rclcl}
F(D^{2}v_{k}+\mathrm{M}_{k}) & = & f_{k}(x) &\mbox{in}& \mathrm{B}^{+}_{1} \\
\beta_{k}(x)\cdot Dv_{k}(x)& = & g_{k}(x) &\mbox{on}& \mathrm{T}_{1},
\end{array}
\right.
\end{eqnarray*}
where
$$
\left\{
\begin{array}{rcl}
f_{k}(x) & \defeq & f(\rho^{k}x) \\
\beta_{k}(x) & \defeq & \beta(\rho^{k}x) \\
g_{k}(x) & \defeq & \rho^{-k}( g(\rho^{k}x)-\beta(\rho^{k}x)\cdot D\mathfrak{p}_{k}( \rho^{k}x)),
\end{array}
\right.
$$
with $\beta_{k}, g_{k}\in C^{1,\alpha}(\overline{\mathrm{T}_{1}})$ (see the proof of Theorem \ref{theorem4.4.5}). Thus, from the available $W^{2,p}$ a priori estimates, $v_{k}\in W^{2,p}\left(\overline{\mathrm{B}^{+}_{\frac{1}{2}}}\right)$. Furthermore, given $0<r<\rho$, there exists an integer $k$ such that $\rho^{k+1}<r\leq\rho^{k}$, we obtain
$$
\begin{array}{rcl}
\displaystyle \sup_{r \in (0, 1/2)}\left(\intav{\mathrm{B}^+_{r} } |D^2 u(z) - \mathrm{M}_{k}|^p dz\right)^{\frac{1}{p}} & \le &  \displaystyle \sup_{r \in (0, 1/2)} \left(\frac{\rho^{kn}}{r^n}.\intav{\mathrm{B}^+_{\rho^k} } |D^2 u(z) - \mathrm{M}_k|^p dz\right)^{\frac{1}{p}}\\
& =&  \displaystyle \sup_{r \in (0, 1/2)} \left(\frac{1}{\rho^n|\mathrm{B}^{+}_{1}|}.\int_{\mathrm{B}^+_1 } |D^2 v_k|^p dx\right)^{\frac{1}{p}} \\
& \le & \mathrm{C}(\verb"universal").
\end{array}
$$

Now, recall the general inequality
$$
\displaystyle \intav{\mathrm{B}^{+}_{r} } \left|D^2 u - \intav{\mathrm{B}^{+}_{r}} D^2 u \ dy\right|^p dx \le   2^{p} \displaystyle  \intav{\mathrm{B}^{+}_{r} } |D^2 u - \mathrm{M}_k|^p dx.
$$
In effect, it is straightforward
$$
\begin{array}{rcl}
\displaystyle \left(\intav{\mathrm{B}^{+}_{r} } \left|D^2 u - \intav{\mathrm{B}^{+}_{r}} D^2 u \ dy\right|^p dx\right)^{\frac{1}{p}} & \le &  \displaystyle  \left(\intav{\mathrm{B}^{+}_{r} } |D^2 u - \mathrm{M}_k|^p dx\right)^{\frac{1}{p}}+\left|\intav{\mathrm{B}^{+}_{r} } D^2 u \ dy - \mathrm{M}_k\right|\\
& \le &   \displaystyle  \left(\intav{\mathrm{B}^{+}_{r} } |D^2 u - \mathrm{M}_k|^p dx\right)^{\frac{1}{p}}+  \displaystyle  \intav{\mathrm{B}^{+}_{r} } |D^2 u - \mathrm{M}_k| dx\\
& \le & 2 \displaystyle  \left(\intav{\mathrm{B}^{+}_{r} } |D^2 u - \mathrm{M}_k|^p dx\right)^{\frac{1}{p}},
\end{array}
$$
	
Therefore, by combining the above inequalities
$$
\displaystyle \|D^{2}u\|_{p-BMO\left(\overline{\mathrm{B}^{+}_{\frac{1}{2}}}\right)} \defeq \sup_{r \in (0, 1/2)}\left(\intav{\mathrm{B}^{+}_{r} } \left|D^2 u - \intav{\mathrm{B}^{+}_{r}} D^2 u \ dy\right|^p dx\right)^{\frac{1}{p}} \leq \mathrm{C}(\verb"universal"),
$$
thereby finishing the proof of the theorem.
\end{proof}

\begin{remark}
As a final remark, since viscosity solutions to \eqref{EqW2,p} enjoy a Hessian bound in the $L^p-$ average sense, then using the embedding result from \cite[Lemma 1]{AZBED}, we can obtain $C^{1, \text{Log-Lip}}$ type estimates. In fact, if $u: \mathrm{B}^{+}_{1} \rightarrow \mathbb{R}$ is such that $D_{ij}u \in \text{p-BMO}(\mathrm{B}_{1/2}^{+})$ for all $1 \le i, j \le n$, then, $u \in C^{1, \text{Log-Lip}}(\mathrm{B}_{1/4}^{+})$. Specifically,
{\scriptsize{
$$
\displaystyle \sup_{\rho \in (0, 1/4)} \sup_{z \in \partial \Omega}\sup_{\mathrm{B}_{\rho}(z)\cap\Omega} \frac{|u(x)-[u(z)+D u(z)\cdot (x-z)]|}{\rho^2\ln (\rho^{-1})} \leq \hat{\mathrm{C}}\cdot \left(\|u\|_{L^{\infty}(\Omega)} + \|g\|_{C^{1, \alpha}(\partial \Omega)} + \|f\|_{\textrm{p-BMO}(\Omega)}\right),
$$}}
for some constant $\hat{\mathrm{C}} = \hat{\mathrm{C}}(n, \lambda, \Lambda, p, \mu_0, \|\beta\|_{C^{1, \alpha}(\overline{\mathrm{T}_1})},\|\partial \Omega\|_{C^{1,1}})$.
\end{remark}

\section{Schauder-type estimates}\label{SecSchauder}

In this final section, we will address $C^{2,\alpha}$ estimates for solutions of fully nonlinear elliptic equations, such as \eqref{1}. Our purpose is to ensure, under certain conditions, Schauder-type estimates for the problem \eqref{1}. Such estimates have been well-explored in the context of fully nonlinear elliptic equations. The celebrated works of Evans \cite{Evans} and Krylov in \cite{Kry82} and \cite{Kry83} in the 1980s provided the starting point in this direction for interior $C^{2,\alpha}$ estimates for classical solutions of homogeneous fully nonlinear elliptic PDEs with constant coefficients, expressed in terms of the $C^{2}$ norms.

An essential ingredient for obtaining the Schauder estimates was the assumption that the governing operator of the equation has a convex structure. Advancing along this line, and with the formalization of the viscosity solution concept by Crandall and Lions in \cite{CL1983}, Caffarelli in \cite{Caff1} (see also \cite{CC}) further ensured interior H\"{o}lder estimates for the second derivatives of viscosity solutions to fully nonlinear equations with variable coefficients.

Finally, in the context of regularity for fully nonlinear equations with oblique boundary conditions, we must initially mention the work of Milakis and Silvestre in \cite{MilSil06}, which obtained $C^{2,\alpha}$ estimates for viscosity solutions with Neumann boundary data on flat boundaries when $F$ is convex. Last but not least, Li and Zhang, in \cite{LiZhang}, recently addressed Schauder-type estimates for equations like \eqref{1} in the setting of convex operators and constant coefficients.

Therefore, the purpose of this section will be to extend the recent results addressed by Li and Zhang in \cite{LiZhang} regarding Schauder-type estimates for models like \eqref{1}.

\bigskip

From now on, we will need to define the following function, which measures the oscillation of the coefficients of the operator $F$ around $x_{0}$:
\begin{eqnarray*}
\tilde{\Phi}_{F}(x;x_{0})=\sup_{\mathrm{M}\in \text{Sym}(n)}\frac{|F(\mathrm{M},x)-F(\mathrm{M},x_{0})|}{1+\|\mathrm{M}\|}.
\end{eqnarray*}	
Moreover, for the sake of simplicity, we denote $\tilde{\Phi}_{F}(x)=\Phi_{F}(x;0)$.

For our purpose, it is necessary to make the following assumptions:
\begin{itemize}
	\item [{\bf (\#)}]({\bf }$C^{2,\alpha_0}$ a priori estimates) Given $g_{0}\in C^{1,\alpha_{0}}(\overline{\mathrm{T}_{1}})$, we will assume that the problem
	\begin{eqnarray*}
		\left\{
		\begin{array}{rclcl}
			F(D^{2}\mathfrak{h},0) & = & 0 &\mbox{in}&   \mathrm{B}^{+}_{1} \\
			\beta\cdot D\mathfrak{h}+\gamma \mathfrak{h} & = & g_{0}  &\mbox{on}&  \mathrm{T}_{1},
		\end{array}
		\right.
	\end{eqnarray*}
	satisfies $\mathfrak{h}\in C^{2,\alpha_{0}}\left(\overline{\mathrm{B}^{+}_{\frac{2}{3}}}\right)$  with the following estimate
	\begin{eqnarray*}
		\|\mathfrak{h}\|_{C^{2,\alpha_{0}}\left(\overline{\mathrm{B}^{+}_{\frac{2}{3}}}\right)}\leq\mathrm{C}^{\ast}\left(\|h\|_{L^{\infty}\left(\mathrm{B}^{+}_{1}\right)}+\|g_{0}\|_{C^{1,\alpha_{0}}\left(\overline{\mathrm{T}_{1}}\right)}\right)
	\end{eqnarray*}
	for a universal constant $\mathrm{C}^{\ast}>0$.
\end{itemize}

In the following, we will present an approximation result that, through an iterative process, yields the desired Schauder-type estimates for solutions of \eqref{1}. The proof is inspired by \cite[Lemma 7.9]{CC} and \cite[Lemma 3.5]{BH20}.

\begin{lemma}[{\bf Approximation Lemma III}]\label{lemadeaprox}
	Let $\varepsilon\in(0,1)$, and $u$ be a normalized viscosity solution for the problem \eqref{1}, where $\beta,\gamma,g\in C^{1,\alpha_{0}}(\overline{\mathrm{T}_{1}})$. Further assume that  $\|\tilde{\Phi}_{F}\|_{L^{n}(\mathrm{B}^{+}_{1})}\leq \varepsilon$, and {\bf (\#)} holds. Then, there exist a function $\mathfrak{h}\in C^{2}\left(\overline{\mathrm{B}^{+}_{\frac{3}{4}}}\right)$ and $\varphi\in C^0\left(\overline{\mathrm{B}^{+}_{\frac{3}{4}}}\right)$ such that
	$$
	\|\mathfrak{h}\|_{C^{2}(\overline{\mathrm{B}^{+}_{\frac{3}{4}}})}\leq \mathrm{C} \quad \textrm{and} \quad u-\mathfrak{h}\in \mathcal{S}\left(\frac{\lambda}{n},\Lambda,\varphi\right),
	$$
	such that
	\begin{eqnarray*}		\|u-\mathfrak{h}\|_{L^{\infty}\left(\mathrm{B}^{+}_{\frac{3}{4}}\right)}+\|\varphi\|_{L^{n}\left(\mathrm{B}^{+}_{\frac{3}{4}}\right)}\le \mathrm{C}^{\prime}\left(\varepsilon^{\theta}+\|f\|_{L^{n}\left(\mathrm{B}^{+}_{1}\right)}+\|g\|_{L^{\infty}(\mathrm{B}^{+}_{1})}\right)
	\end{eqnarray*}
	for constants $\mathrm{C}, \mathrm{C}^{\prime}>0$ depending only on $n$, $\lambda$,$\Lambda$, $\mu_{0}$, $\alpha$, $\mathrm{C}^{\ast}$, $\|\beta\|_{C^{1,\alpha_{0}}(\overline{\mathrm{T}_{1}})}$ and $\|\gamma\|_{C^{1,\alpha_{0}}(\overline{\mathrm{T}_{1}})}$, and $\theta\in(0,1)$ depending only on $n$, $\lambda$, $\Lambda$ and $\mu_{0}$.
\end{lemma}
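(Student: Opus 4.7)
The argument is a direct Caffarelli-type comparison with a frozen-coefficient problem, combined with the A.B.P.-type bound of Lemma~\ref{ABP}. The key device is to choose the auxiliary function $\mathfrak{h}$ to satisfy the \emph{homogeneous} oblique condition (with $g$ replaced by $0$) so that its $C^{2}$ norm does not pick up the $C^{1,\alpha_{0}}$ norm of $g$; the inhomogeneous datum $g$ then reappears cleanly through the ABP step applied to $w \defeq u-\mathfrak{h}$.

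First, I would take $\mathfrak{h}$ to be the unique viscosity solution of
\begin{eqnarray*}
\left\{
\begin{array}{rclcl}
F(D^{2}\mathfrak{h},0) & = & 0 & \mbox{in} & \mathrm{B}^{+}_{3/4}, \\
\beta(x)\cdot D\mathfrak{h}(x)+\gamma(x)\mathfrak{h}(x) & = & 0 & \mbox{on} & \mathrm{T}_{3/4}, \\
\mathfrak{h} & = & u & \mbox{on} & \partial \mathrm{B}^{+}_{3/4}\setminus \mathrm{T}_{3/4},
\end{array}
\right.
\end{eqnarray*}
whose existence is classical via Perron's method and the comparison principle in \cite{Ishii91}. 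Lemma~\ref{ABP} yields $\|\mathfrak{h}\|_{L^{\infty}(\mathrm{B}^{+}_{3/4})}\le \|u\|_{L^{\infty}}\le 1$, and hypothesis (\#) then gives $\|\mathfrak{h}\|_{C^{2,\alpha_{0}}(\overline{\mathrm{B}^{+}_{3/4}})}\le \mathrm{C}$ (after a standard rescaling to reconcile the radii $2/3$ and $3/4$), where $\mathrm{C}$ depends only on $n$, $\lambda$, $\Lambda$, $\mu_{0}$, $\alpha_{0}$, $\mathrm{C}^{\ast}$, $\|\beta\|_{C^{1,\alpha_{0}}}$ and $\|\gamma\|_{C^{1,\alpha_{0}}}$, but \emph{not} on $g$.

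Second, for $w \defeq u-\mathfrak{h}$ I would run the standard Pucci-type viscosity computation: if $\phi\in C^{2}$ touches $w$ from above at $x_{0}\in \mathrm{B}^{+}_{3/4}$, then $\phi+\mathfrak{h}$ touches $u$ from above at $x_{0}$, and uniform ellipticity together with $F(D^{2}\mathfrak{h}(x_{0}),0)=0$ yield
\begin{eqnarray*}
\mathscr{P}^{+}_{\lambda,\Lambda}(D^{2}\phi(x_{0})) \geq f(x_{0})-F(D^{2}\mathfrak{h}(x_{0}),x_{0}) \geq -|f(x_{0})|-(1+\mathrm{C})\tilde{\Phi}_{F}(x_{0}),
\end{eqnarray*}
where the last step uses
\begin{eqnarray*}
|F(D^{2}\mathfrak{h}(x_{0}),x_{0})|=|F(D^{2}\mathfrak{h}(x_{0}),x_{0})-F(D^{2}\mathfrak{h}(x_{0}),0)|\le \tilde{\Phi}_{F}(x_{0})\bigl(1+\|D^{2}\mathfrak{h}(x_{0})\|\bigr)\le (1+\mathrm{C})\tilde{\Phi}_{F}(x_{0}).
\end{eqnarray*}
A symmetric argument gives the corresponding lower bound. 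Setting $\varphi(x)\defeq |f(x)|+(1+\mathrm{C})\tilde{\Phi}_{F}(x)$ one thus obtains $w\in \mathcal{S}(\lambda/n,\Lambda,\varphi)$. By construction $w\equiv 0$ on $\partial \mathrm{B}^{+}_{3/4}\setminus \mathrm{T}_{3/4}$ and $\beta\cdot Dw+\gamma w\equiv g$ on $\mathrm{T}_{3/4}$.

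Finally, applying Lemma~\ref{ABP} to $w$ yields $\|w\|_{L^{\infty}(\mathrm{B}^{+}_{3/4})}\le \mathrm{C}\bigl(\|g\|_{L^{\infty}}+\|\varphi\|_{L^{n-\varepsilon_{0}}}\bigr)$, and by H\"{o}lder's inequality together with $\|\tilde{\Phi}_{F}\|_{L^{n}(\mathrm{B}^{+}_{1})}\le \varepsilon$ one obtains $\|\varphi\|_{L^{n}(\mathrm{B}^{+}_{3/4})}\le \|f\|_{L^{n}(\mathrm{B}^{+}_{1})}+(1+\mathrm{C})\varepsilon$. Summing the two estimates and invoking $\varepsilon\le \varepsilon^{\theta}$ (valid for $\varepsilon\in (0,1)$ and any $\theta\in (0,1)$) delivers the claim. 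The chief technical obstacles are (i)~arranging hypothesis~(\#) to apply on $\overline{\mathrm{B}^{+}_{3/4}}$ rather than on $\overline{\mathrm{B}^{+}_{2/3}}$, which is handled by a standard scaling/covering argument, and (ii)~choosing the boundary datum of $\mathfrak{h}$ to vanish on $\mathrm{T}_{3/4}$ (rather than equal $g$) so that the constant $\mathrm{C}$ in the $C^{2}$ bound is independent of $\|g\|_{C^{1,\alpha_{0}}}$, with $g$ reappearing only as $\|g\|_{L^{\infty}}$ through the ABP step. A sharper exponent $\theta\in (0,1)$ could in principle be extracted via a compactness/contradiction argument along the lines of Lemmas~\ref{approx} and \ref{lemma4.4.3}, though the direct linear bound above already yields the stated inequality.
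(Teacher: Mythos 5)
Your overall architecture (frozen-coefficient comparison function with homogeneous oblique datum, Pucci computation for $w=u-\mathfrak{h}$, then A.B.P.) matches the paper's, but there is a genuine gap at the very first step: you solve the auxiliary problem on $\mathrm{B}^{+}_{3/4}$ itself, with Dirichlet data $\mathfrak{h}=u$ on $\partial \mathrm{B}^{+}_{3/4}\setminus \mathrm{T}_{3/4}$, and then claim that hypothesis {\bf (\#)} gives $\|\mathfrak{h}\|_{C^{2,\alpha_{0}}(\overline{\mathrm{B}^{+}_{3/4}})}\leq \mathrm{C}$ ``after a standard rescaling to reconcile the radii $2/3$ and $3/4$''. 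This cannot work. Hypothesis {\bf (\#)} is an estimate on the \emph{smaller} half-ball $\overline{\mathrm{B}^{+}_{2/3}}$ for solutions defined on $\mathrm{B}^{+}_{1}$; it is interior with respect to the lateral boundary $\partial \mathrm{B}^{+}_{1}\setminus \mathrm{T}_{1}$, and no scaling or covering argument upgrades it to a uniform $C^{2}$ bound up to the portion of the boundary where $\mathfrak{h}$ merely matches the continuous function $u$ (to scale the estimate up to radius $3/4$ you would need the equation on a half-ball of radius $9/8$, which you do not have; and covering only yields bounds that degenerate like $d(x)^{-2}$ near the lateral boundary). Since the whole second and third steps of your argument feed $\|D^{2}\mathfrak{h}\|_{L^{\infty}}$ into $\varphi$ and into the A.B.P. bound, the argument collapses without this uniform Hessian bound.

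The paper's proof is built precisely around this obstruction, and that is where the exponent $\theta$ comes from. It solves the auxiliary problem on the larger half-ball $\mathrm{B}^{+}_{7/8}$, uses the adimensional form of {\bf (\#)} to get $\|D^{2}\mathfrak{h}\|_{L^{\infty}(\mathrm{B}^{+}_{7/8-\vartheta})}\leq \mathrm{C}\vartheta^{-2}$ for every layer width $\vartheta$, applies the A.B.P. estimate only on $\mathrm{B}^{+}_{7/8-\vartheta}$ (where $w$ no longer vanishes on the lateral boundary), controls $\|w\|_{L^{\infty}(\partial \mathrm{B}^{+}_{7/8-\vartheta}\setminus \mathrm{T}_{7/8-\vartheta})}\leq \mathrm{C}\vartheta^{\alpha'}(1+\|f\|_{L^{n}}+\|g\|_{L^{\infty}})$ via the global $C^{0,\alpha'}$ estimate of Li--Zhang together with $w=0$ on $\partial \mathrm{B}^{+}_{7/8}\setminus \mathrm{T}_{7/8}$, and finally balances the two contributions $\vartheta^{\alpha'}$ and $\varepsilon\vartheta^{-2}$ by choosing $\vartheta=\varepsilon^{1/(2+\alpha')}$, which produces $\theta=\alpha'/(2+\alpha')$. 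Your closing remark that the ``direct linear bound'' already gives the inequality (with $\varepsilon\leq\varepsilon^{\theta}$) is therefore misleading: the linear-in-$\varepsilon$ bound is unavailable, and the sublinear rate $\varepsilon^{\theta}$ is not an optional refinement but the unavoidable outcome of the boundary-layer argument that your proposal omits. To repair your proof, replace the domain $\mathrm{B}^{+}_{3/4}$ by $\mathrm{B}^{+}_{7/8}$ and insert exactly this layer-and-optimization step.
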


\begin{proof}
Let $\mathfrak{h} \in C^0\left(B^+_{7/8}\right)$ be the viscosity solution to
\begin{eqnarray}\label{problemalimite}
\left\{
\begin{array}{rclcl}
F(D^{2}\mathfrak{h},0) & = & 0 &\mbox{in}&   \mathrm{B}^{+}_{\frac{7}{8}} \\
\mathfrak{h} & = & u &\mbox{on}& \partial \mathrm{B}^{+}_{\frac{7}{8}}\setminus \mathrm{T}_{\frac{7}{8}}\\
\beta\cdot D\mathfrak{h}+\gamma \mathfrak{h} & = & 0  &\mbox{on}&  \mathrm{T}_{\frac{7}{8}}.
\end{array}
\right.
\end{eqnarray}
By hypothesis {\bf(\#)}, it follows that $\mathfrak{h} \in C^{2,\alpha_{0}}$, and for $\vartheta\in\left(0,\frac{7}{8}\right)$ and scaling properties, we have
\begin{eqnarray}\label{estimativa1} \|\mathfrak{h}\|_{L^{\infty}\left(\mathrm{B}^{+}_{\frac{7}{8}-\vartheta}\right)}+\vartheta\|D\mathfrak{h}\|_{L^{\infty}\left(\mathrm{B}^{+}_{\frac{7}{8}-\vartheta}\right)}+\vartheta^{2}\|D^{2}\mathfrak{h}\|_{L^{\infty}\left(\mathrm{B}^{+}_{\frac{7}{8}-\vartheta}\right)}\leq \mathrm{C},
\end{eqnarray}
where $\mathrm{C}=\mathrm{C}(n, \lambda,\Lambda,\mu_{0},\alpha_{0},\mathrm{C}^{\ast},\|\beta\|_{C^{1,\alpha_{0}}(\overline{\mathrm{T}_{1}})},\|\gamma\|_{C^{1,\alpha_{0}}(\overline{\mathrm{T}_{1}})}) >0$. Now, by considering $w=u-\mathfrak{h}$, we see that $w$ satisfies in the viscosity sense
\begin{eqnarray*}
\left\{
\begin{array}{rclcl}
w\in \mathcal{S}\left(\frac{\lambda}{n},\Lambda,\varphi\right) &\mbox{in}&   \mathrm{B}^{+}_{\frac{7}{8}} \\
w= 0 &\mbox{on}& \partial \mathrm{B}^{+}_{\frac{7}{8}}\setminus \mathrm{T}_{\frac{7}{8}}\\
\beta\cdot Dw+\gamma w= 0  &\mbox{on}&  \mathrm{T}_{\frac{7}{8}},
\end{array}
\right.
\end{eqnarray*}
 where $\varphi(x)=f(x)-F(D^{2}\mathfrak{h}(x),x)$. Then, by the  A.B.P. estimate \ref{ABP},
\begin{eqnarray}\label{estimativa2}
\|w\|_{L^{\infty}\left(\mathrm{B}^{+}_{\frac{7}{8}-\vartheta}\right)}&\leq& \|w\|_{\left(\partial\mathrm{B}^{+}_{\frac{7}{8}-\vartheta}\setminus \mathrm{T}_{\frac{7}{8}-\vartheta}\right)}+\mathrm{C}\left(\|\varphi\|_{L^{n}\left(\mathrm{B}^{+}_{\frac{7}{8}-\vartheta}\right)}\right)\nonumber\\
&\leq&\|w\|_{\left(\partial\mathrm{B}^{+}_{\frac{7}{8}-\vartheta}\setminus \mathrm{T}_{\frac{7}{8}-\vartheta}\right)}+\mathrm{C}\Bigg(\|f\|_{L^{n}\left(\mathrm{B}^{+}_{\frac{7}{8}-\vartheta}\right)}+\|F(D^{2}h(\cdot),\cdot)\|_{L^{n}\left(\mathrm{B}^{+}_{\frac{7}{8}-\vartheta}\right)}\Bigg),
\end{eqnarray}
for some constant $\mathrm{C}=\mathrm{C}(n,\lambda,\Lambda,\mu_{0})>0$.

Next, we will study the second term of the estimate \eqref{estimativa2}. In fact, as $\mathfrak{h}$ is a viscosity solution of \eqref{problemalimite}, and by the hypothesis on the oscillation of the coefficients, we obtain in $\mathrm{B}^{+}_{\frac{7}{8}-\vartheta}$
\begin{eqnarray*}
\|F(D^{2}\mathfrak{h}(\cdot),\cdot)\|_{L^{n}\left(\mathrm{B}^{+}_{\frac{7}{8}-\vartheta}\right)}&=&\left(\int_{\mathrm{B}^{+}_{\frac{7}{8}-\vartheta}}|F(D^{2}\mathfrak{h}(x),x)-\underbrace{F(D^{2}\mathfrak{h}(x),0)}_{=0}|^{n}dx\right)^{\frac{1}{n}}\\
&\leq& \left(\int_{\mathrm{B}^{+}_{\frac{7}{8}-\vartheta}}|\tilde{\Phi}_{F}(x)(1+\|D^{2}\mathfrak{h}(x)\|)|^{n}dx\right)^{\frac{1}{n}}\\
&\leq&\left(1+\|D^{2} \mathfrak{h}(x)\|_{L^{\infty}\left(\mathrm{B}_{\frac{7}{8}-\vartheta}\right)}\right) \|\tilde{\Phi}_{F}\|_{L^{n}\left(\mathrm{B}_{\frac{7}{8}-\vartheta}\right)}\\
&\leq& \left(1+\|D^{2}\mathfrak{h}(x)\|_{L^{\infty}\left(\mathrm{B}_{\frac{7}{8}-\vartheta}\right)}\right)\varepsilon.
\end{eqnarray*}
Consequently, by \eqref{estimativa1}, we have
\begin{eqnarray}
\|F(D^{2}h(\cdot),\cdot)\|_{L^{n}\left(\mathrm{B}^{+}_{\frac{7}{8}-\vartheta}\right)}\leq\mathrm{C} \varepsilon(1+\vartheta^{-2})\leq 2\mathrm{C}\varepsilon\vartheta^{-2},
\end{eqnarray}
since $\vartheta\in (0, 1)$, and $0<\mathrm{C}=\mathrm{C}\left(n, \lambda,\Lambda,\mu_{0},\alpha_{0},\mathrm{C}^{*},\|\beta\|_{C^{1,\alpha_{0}}(\overline{\mathrm{T}_{1}})},\|\gamma\|_{C^{1,\alpha_{0}}(\overline{\mathrm{T}_{1}})}\right)$.

On the other hand, by \cite[Theorem 1.1]{LiZhang}, we have that $w\in C^{\alpha^{\prime}}(\overline{\mathrm{B}^{+}_{\frac{7}{8}-\vartheta}})$, for some $\alpha^{\prime}\in(0,1)$ depending only on $n$, $\lambda$, $\Lambda$, and $\mu_{0}$. Therefore, from $w=0$ on $\partial\mathrm{B}^{+}_{\frac{7}{8}}\setminus \mathrm{T}_{\frac{7}{8}}$, it follows that
\begin{eqnarray}\label{estimativa3}
\|w\|_{\left(\partial\mathrm{B}^{+}_{\frac{7}{8}-\vartheta}\setminus \mathrm{T}_{\frac{7}{8}-\vartheta}\right)}&\leq&\left([w]_{\alpha',\overline{\mathrm{B}^{+}_{\frac{7}{8}-\vartheta}}}\right)\vartheta^{\alpha'}\nonumber\\
&\leq& \mathrm{C}\vartheta^{\alpha'}(1+\|f\|_{L^{n}(\mathrm{B}^{+}_{1})}+\|g\|_{L^{\infty}(\mathrm{T}_{1})}),
\end{eqnarray}
where $0<\mathrm{C}=\mathrm{C}(n,\lambda,\Lambda,\mu_{0})$ (here, we used  \cite[Proposition 4.14]{CC}).

Finally, setting $\vartheta=\varepsilon^{\frac{1}{2+\alpha^{\prime}}}$ and $\theta=\frac{\alpha^{\prime}}{2+\alpha^{\prime}}$, and using \eqref{estimativa2} and \eqref{estimativa3},
\begin{eqnarray*}
\|w\|_{L^{\infty}\left(\mathrm{B}^{+}_{\frac{7}{8}-\vartheta}\right)}+\|\varphi\|_{L^{n}\left(\mathrm{B}^{+}_{\frac{7}{8}-\vartheta}\right)}&\leq&\mathrm{C}\Big(\|f\|_{L^{n}(\mathrm{B}^{+}_{1})}+\|g\|_{L^{\infty}(\mathrm{T}_{1})}+\\
&+&\vartheta^{-2}\varepsilon+\vartheta^{\alpha^{\prime}}(1+\|f\|_{L^{n}(\mathrm{B}^{+}_{1})}+\|g\|_{L^{\infty}(\mathrm{T}_{1})})\Big)\\
&\leq&\mathrm{C}^{\prime}\left(\varepsilon^{\theta}+\|f\|_{L^{n}(\mathrm{B}^{+}_{1})}+\|g\|_{L^{\infty}(\mathrm{T}_{1})}\right),
\end{eqnarray*}
for some $\mathrm{C}^{\prime}=\mathrm{C}^{\prime}\left(n, \lambda,\Lambda,\mu_{0},\alpha_{0},\mathrm{C}^{\ast},\|\beta\|_{C^{1,\alpha_{0}}(\overline{\mathrm{T}_{1}})},\|\gamma\|_{C^{1,\alpha_{0}}(\overline{\mathrm{T}_{1}})}\right)>0$.  This concludes the proof of the Theorem.
\end{proof}

To obtain our main result, we will need the following structural assumption:
\begin{itemize}
	\item [{\bf (H1)}]({\bf $C^{2,\alpha_{0}}$ type estimates for translated problems }) Given $g_{0}\in C^{1,\alpha_{0}}(\overline{\mathrm{T}_{1}})$ for some $\alpha_{0}\in(0,1)$, and $\mathrm{M}\in Sym(n)$ such that $F(\mathrm{M},0)=0$, we will assume that the problem	
	\begin{eqnarray*}
		\left\{
		\begin{array}{rclcl}
			F(D^{2}\mathfrak{h}+\mathrm{M},0) & = & 0 &\mbox{in}&   \mathrm{B}^{+}_{1} \\
			\beta\cdot D\mathfrak{h}+\gamma \mathfrak{h} & = & g_{0}  &\mbox{on}&  \mathrm{T}_{1},
		\end{array}
		\right.
	\end{eqnarray*}
	admits solutions $\mathfrak{h}\in C^{2,\alpha_{0}}\left(\overline{\mathrm{B}^{+}_{\frac{2}{3}}}\right)$ with the following estimate
	\begin{eqnarray*} \|\mathfrak{h}\|_{C^{2,\alpha_{0}}\left(\overline{\mathrm{B}^{+}_{\frac{2}{3}}}\right)}\leq\mathrm{C}^{\sharp}\left(\|\mathfrak{h}\|_{L^{\infty}\left(\mathrm{B}^{+}_{1}\right)}+\|g_{0}\|_{C^{1,\alpha_{0}}\left(\overline{\mathrm{T}_{1}}\right)}\right)
	\end{eqnarray*}
	for a universal constant $\mathrm{C}^{\sharp}>0$.

	\item[{\bf (H2)}]({\bf Regularity on the data of the problem \eqref{1}}) Assume that $f\in C^{0,\alpha}(\mathrm{B}^{+}_{1})$ for some $\alpha\in(0,1)$. In addition, assume that $\beta\in C^{1,\alpha_{\beta}}(\overline{\mathrm{T}_{1}})$, $\gamma\in C^{1,\alpha_{\gamma}}(\overline{\mathrm{T}_{1}})$, and $g\in C^{1,\alpha_{g}}(\overline{\mathrm{T}_{1}})$ fulfilling the following relation for the H"{o}lder exponents continuity   $\alpha< \min\{\alpha_{\beta}, \alpha_{\gamma}, \alpha_{g}\}$. Furthermore, assume that $\beta$ and $\gamma$ are such that for every $r\in(0,1)$
	\begin{eqnarray*}
		\|\beta\|_{L^{\infty}(\mathrm{T}_{r})}\leq \mathrm{C}_{\beta}r^{1+\alpha_{\beta}} \quad \mbox{and} \quad \|\gamma\|_{L^{\infty}(\mathrm{T}_{r})}\leq \mathrm{C}_{\gamma}r^{1+\alpha_{\gamma}},
	\end{eqnarray*}
	where $\mathrm{C}_{\beta}$ and $\mathrm{C}_{\gamma}$ are positive constants.
\end{itemize}

The next result is a key tool in establishing our Schauder-type estimates. It provides a geometric decay of $(2+\alpha)$-order (at the origin) for solutions of \eqref{1} , thereby addressing the desired higher regularity estimates.

\begin{theorem}[{\bf Point-wise $C^{2, \alpha}$ estimates}]\label{ThmSchauder}
	Let $u$ be a viscosity solution to
	\begin{eqnarray*}
		\left\{
		\begin{array}{rcl}
			F(D^2u,x) &=& f(x) \quad \mbox{in} \,\,   \mathrm{B}^{+}_{r_{0}}\\
			\beta \cdot Du + \gamma \, u&=& g(x)  \quad \mbox{on} \,\,  \mathrm{T}_{r_{0}}.
		\end{array}
		\right.,
	\end{eqnarray*}
	where $F$ satisfies the assumption {\bf(\text{A1})}, with $F(0,0)=f(0)=0$, and consider $0<\alpha<\min\{\alpha_{0},\alpha_{\beta},\alpha_{\gamma}, \alpha_{g}\}$. Further assume the structural hypotheses {\bf(H1)-(H2)} are in force, and that there exist constants $\mathrm{C}_{0}>0$ and $\mathrm{C}_{1}>0$ such that
	\begin{eqnarray*}
		\left(\intav{\mathrm{B}^{+}_{r}}|\tilde{\Phi}_{F}(x)|^{n}dx\right)^{\frac{1}{n}}\leq \mathrm{C}_{0}\left(\frac{r}{r_{0}}\right)^{\alpha} \quad \mbox{and} \quad \left(\intav{\mathrm{B}^{+}_{r}}|f(x)|^{n}dx\right)^{\frac{1}{n}}\leq \mathrm{C}_{1}\left(\frac{r}{r_{0}}\right)^{\alpha}, \forall r\in(0,r_{0}].
	\end{eqnarray*}
	Then, $u \in C^{2,\alpha}$ at the origin. More precisely, there exists a quadratic polynomial  $\mathfrak{p}$  such that:
	\begin{itemize}
		\item[(i)] $\|u-\mathfrak{p}\|_{L^{\infty}(\overline{\mathrm{B}^{+}_{1}}\cap \mathrm{B}_{r})}\leq \mathrm{C}^{\prime \prime}\left(\frac{r}{r_{0}}\right)^{2+\alpha}$, \, $ \forall \, r\in(0,r_{1}]$.
		\item[(ii)] $\|D\mathfrak{p}(0)\|+\|D^{2}\mathfrak{p}(0)\|\leq \mathrm{C}^{\prime \prime}$.
		\item[(iii)] $\mathrm{C}^{\prime \prime}\leq \mathrm{C}\left(\|u\|_{L^{\infty}(\mathrm{B}^{+}_{1})}+\mathrm{C}_{1}+\|g\|_{C^{1,\alpha}(\overline{\mathrm{T}_{1}})}\right)$,
	\end{itemize}
	where $1<\mathrm{C}=\mathrm{C}(n,\lambda,\Lambda,\mu_{0},\mathrm{C}^{\sharp},\alpha,\|\beta\|_{C^{1,\alpha}(\overline{\mathrm{T}_{1}})},\|\gamma\|_{C^{1,\alpha}(\overline{\mathrm{T}_{1}})})$ and
	$r_{1}=\check{\mathrm{C}}=\mathrm{C}^{-1}r_{0}$
\end{theorem}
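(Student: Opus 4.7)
The plan is to prove Theorem \ref{ThmSchauder} by a geometric iteration of $(2+\alpha)$-order at the origin, in the spirit of the iteration carried out in Theorem \ref{theorem4.4.5}, but now using Approximation Lemma III (Lemma \ref{lemadeaprox}) together with the translated $C^{2,\alpha_0}$ a priori estimate \textbf{(H1)} as the building block. After the standard normalization $\|u\|_{L^{\infty}(\mathrm{B}^{+}_{1})} \leq 1$, $\mathrm{C}_{1} \leq \eta$, $\|g\|_{C^{1,\alpha}(\overline{\mathrm{T}_{1}})} \leq 1$ (where $\eta$ is the smallness parameter coming from Lemma \ref{lemadeaprox}), I would construct a sequence of quadratic polynomials $\mathfrak{p}_{k}(x) = \mathbf{a}_{k} + \mathbf{b}_{k}\cdot x + \frac{1}{2} x^{t} \mathrm{M}_{k} x$ satisfying, for a universal $\rho \in (0,\tfrac{1}{2}]$ and all $k \in \mathbb{N}$:
\begin{itemize}
\item[(a)] $F(\mathrm{M}_{k},0) = 0$;
\item[(b)] $\displaystyle \sup_{\mathrm{B}^{+}_{\rho^{k}}} |u - \mathfrak{p}_{k}| \leq \rho^{k(2+\alpha)}$;
\item[(c)] $|\mathbf{a}_{k+1}-\mathbf{a}_{k}| + \rho^{k}\|\mathbf{b}_{k+1}-\mathbf{b}_{k}\| + \rho^{2k}\|\mathrm{M}_{k+1}-\mathrm{M}_{k}\| \leq \mathrm{C}\, \rho^{k(2+\alpha)}$,
\end{itemize}
with $\mathfrak{p}_{0} \equiv 0$ (using $F(0,0) = f(0) = 0$). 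Passing to the limit and telescoping in (c) yield limit coefficients $(\mathbf{a}_{\infty},\mathbf{b}_{\infty},\mathrm{M}_{\infty})$, the desired polynomial $\mathfrak{p}$, and items (i)--(iii) of the theorem by the usual dyadic choice $\rho^{k+1}(r_0) < r \leq \rho^{k}(r_0)$.

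The inductive step is handled by the rescaled function $v_{k}(x) \defeq \rho^{-k(2+\alpha)}(u - \mathfrak{p}_{k})(\rho^{k}x)$, which by (b) is normalized and solves in the viscosity sense
\begin{eqnarray*}
\left\{
\begin{array}{rclcl}
F_{k}(D^{2}v_{k} + \mathrm{M}_{k},x) & = & f_{k}(x) & \mbox{in} & \mathrm{B}^{+}_{1},\\
\beta_{k}\cdot Dv_{k} + \gamma_{k} v_{k} & = & g_{k} & \mbox{on} & \mathrm{T}_{1},
\end{array}
\right.
\end{eqnarray*}
where $F_{k}(\mathrm{X},x) \defeq \rho^{-k\alpha}F(\rho^{k\alpha}\mathrm{X}, \rho^{k}x)$, $f_{k}(x) \defeq \rho^{-k\alpha}f(\rho^{k}x)$, $\beta_{k}(x) \defeq \rho^{-k}\beta(\rho^{k}x)$, $\gamma_{k}(x) \defeq \gamma(\rho^{k}x)$, and $g_{k}$ absorbs the contribution of $\beta \cdot D\mathfrak{p}_{k} + \gamma\mathfrak{p}_{k}$ with the correct scaling. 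Applying Lemma \ref{lemadeaprox} to $v_{k}$ produces an approximating $\mathfrak{h}\in C^{2,\alpha_{0}}$ (via the translated estimate \textbf{(H1)} with matrix $\mathrm{M}_{k}$), whose Taylor polynomial at the origin supplies increments $(\tilde{\mathbf{a}},\tilde{\mathbf{b}},\tilde{\mathrm{M}})$ with $F(\mathrm{M}_{k} + \tilde{\mathrm{M}},0) = 0$; setting $\mathbf{a}_{k+1} = \mathbf{a}_{k} + \rho^{k(2+\alpha)}\tilde{\mathbf{a}}$, $\mathbf{b}_{k+1} = \mathbf{b}_{k} + \rho^{k(1+\alpha)}\tilde{\mathbf{b}}$, $\mathrm{M}_{k+1} = \mathrm{M}_{k} + \rho^{k\alpha}\tilde{\mathrm{M}}$, and scaling back through the $C^{2,\alpha_{0}}$ Taylor remainder (with $\alpha < \alpha_{0}$) yields (b) and (c) at step $k+1$, provided $\rho$ is chosen universally small.

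The main obstacle, and the place where hypothesis \textbf{(H2)} becomes essential, is verifying that the rescaled problem for $v_{k}$ persistently fits the hypotheses of Lemma \ref{lemadeaprox}: namely, the smallness of $\|\tilde{\Phi}_{F_{k}}\|_{L^{n}(\mathrm{B}^{+}_{1})}$, the smallness of $\|f_{k}\|_{L^{n}(\mathrm{B}^{+}_{1})}$, and a uniform $C^{1,\alpha}$ bound for $g_{k}$. The first two follow directly from the Morrey-type decay estimates on $\tilde{\Phi}_{F}$ and on $f$ assumed in the theorem, which precisely cancel the $\rho^{-k\alpha}$ scaling factor. The delicate point is $g_{k}$: the increments $\mathbf{b}_{k}$ and $\mathrm{M}_{k}$ grow at most linearly in $k$ by (c), so the terms coming from $\beta(\rho^{k}\cdot)\cdot D\mathfrak{p}_{k}(\rho^{k}\cdot)$ and $\gamma(\rho^{k}\cdot)\mathfrak{p}_{k}(\rho^{k}\cdot)$ are controlled by the $L^{\infty}$-decay assumptions $\|\beta\|_{L^{\infty}(\mathrm{T}_{r})} \leq \mathrm{C}_{\beta} r^{1+\alpha_{\beta}}$ and $\|\gamma\|_{L^{\infty}(\mathrm{T}_{r})} \leq \mathrm{C}_{\gamma} r^{1+\alpha_{\gamma}}$, while the $C^{0,\alpha}$ seminorm of their derivatives is tamed by the condition $\alpha < \min\{\alpha_{\beta},\alpha_{\gamma},\alpha_{g}\}$, exactly as in the estimate of $[g_{k}]_{0,\alpha,\mathrm{T}_{1}}$ performed in the proof of Theorem \ref{theorem4.4.5}. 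Bookkeeping the universal dependencies along this iteration, rescaling back to radius $r_{0}$ and tracking the normalization constant, we obtain (iii) with the stated dependencies and $r_{1} = \mathrm{C}^{-1} r_{0}$, thereby concluding the proof.
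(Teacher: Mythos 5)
Your proposal follows essentially the same route as the paper's proof: the inductive construction of quadratic polynomials with $F(\mathrm{M}_k,0)=0$ and decay $\rho^{k(2+\alpha)}$, the rescaled function $v_k$, Approximation Lemma \ref{lemadeaprox} combined with the translated a priori estimate \textbf{(H1)} to extract the second-order Taylor correction, the Morrey-type decay of $\tilde{\Phi}_F$ and $f$ cancelling the $\rho^{-k\alpha}$ factors, and \textbf{(H2)} together with $\alpha<\min\{\alpha_\beta,\alpha_\gamma,\alpha_g\}$ controlling $g_k$, exactly as in the paper. The only differences are bookkeeping slips in the rescaled data: the paper takes $F_k(\mathrm{X},x)=\rho^{-k\alpha}\left(F(\rho^{k\alpha}\mathrm{X}+\mathrm{M}_k,\rho^kx)-F(\mathrm{M}_k,\rho^kx)\right)$, $f_k(x)=\rho^{-k\alpha}\left(f(\rho^kx)-F(\mathrm{M}_k,\rho^kx)\right)$, $\beta_k(x)=\beta(\rho^kx)$ and $\gamma_k(x)=\rho^k\gamma(\rho^kx)$ (obtained by dividing the boundary relation by $\rho^{k(1+\alpha)}$, not $\rho^{k(2+\alpha)}$), and the telescoping in (c) gives a uniform bound $\|\mathrm{M}_k\|\leq\tilde{\mathrm{C}}$ (not merely linear growth), which is what controls the extra term $F(\mathrm{M}_k,\rho^kx)$ in $f_k$ via the decay of $\tilde{\Phi}_F$.
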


\begin{remark}
We must emphasize that the assumptions $F(0,0)=f(0)=0$ are not restrictive. Indeed, we can use the uniform ellipticity and a suitable translation in the source term in such a way that these hypotheses are satisfied. For more details, see \cite[Chapter 8]{CC}.
\end{remark}

\begin{proof}

As addressed in \cite[Lemma 6.3]{LiZhang}, we can assume, without loss of generality, that $g(0)=0$ and $Dg(0)=0$. Moreover, by scaling reasoning (cf. \cite[Theorem 8.1]{CC}), it is enough to prove that there exist constants $\varepsilon\in (0,1)$ and $\delta>0$ depending only on $n$, $\lambda$, $\Lambda$, $\mathrm{C}_{\gamma}$, $\mathrm{C}_{\beta}$, $\mathrm{C}^{\sharp}$, $\alpha$, $\|\beta\|_{C^{1,\alpha}(\overline{\mathrm{T}_{1}})}$ and $\|\gamma\|_{C^{1,\alpha}(\overline{\mathrm{T}_{1}})}$ such that if $u$ is a normalized viscosity  solution of \eqref{1}, $\|g\|_{C^{1,\alpha}(\overline{\mathrm{T}_{1}})}\leq \varepsilon$, and
	\begin{eqnarray}\label{condicao1}
		\left(\intav{\mathrm{B}^{+}_{r}}|\tilde{\Phi}_{F}(x)|^{n}dx\right)^{\frac{1}{n}}\leq \delta r^{\alpha}, \quad  \left(\intav{\mathrm{B}^{+}_{r}}|f(x)|^{n}dx\right)^{\frac{1}{n}}\leq \delta r^{\alpha}, \forall r\in(0,1],
	\end{eqnarray}
	then there exists a quadratic polynomial function $\mathfrak{p}$ satisfying $(i)$ and $(ii)$ with $r_{1}=1$.

	Firstly, let $\rho\in(0,1)$ be a radius such that
	\begin{equation} \label{Esc1}
		\rho  \defeq \min\left\{ \left(\frac{1}{2}\right)^{\frac{1}{\alpha}}, \,\,\frac{49}{64}, \,\, \left(\frac{1}{3\mathrm{C}^{\sharp}}\right)^{\frac{1}{\alpha_{0}-\alpha}}\right\}.
	\end{equation}
	For such a fixed $\rho>0$, we choose $\varepsilon\in(0,1)$, such that
	\begin{equation}\label{Esc2}
		10\mathrm{C}^{\prime}\varepsilon^{\theta}\leq \rho^{2+\alpha},
	\end{equation}
	where $\mathrm{C}^{\prime}$ and $\theta$ are as in Lemma \ref{lemadeaprox}. Now,  we select $\delta>0$ such that
$$
\delta\leq \frac{\varepsilon}{2^{1-\frac{1}{n}}(1+\tilde{\mathrm{C}})\omega^{\frac{1}{n}}_{n}},
$$
 where $\omega_{n}$ denotes the volume of the unit ball $\mathrm{B}_{1} \subset \mathbb{R}^{n}$, and the constant $\tilde{\mathrm{C}}>0$ will be taken in such a way that
 $$
 20\tilde{\mathrm{C}}\max\{\mathrm{C}_{\beta},\mathrm{C}_{\gamma}\}\leq \varepsilon.
 $$

Such choices determine universal parameters in our approach. In such a context, it is enough to prove the following statement:

\begin{statement} For every $k \ge 1$, there exist quadratic polynomials
	\begin{eqnarray*}
		\mathfrak{p}_{k}(x)= \frac{1}{2}x^{t} \cdot \mathrm{M}_{k} \cdot x + \mathbf{b}_k \cdot x + \mathbf{a}_k,
	\end{eqnarray*}
	where $\mathrm{M}_{k} \in \textrm{Sym}(n)$, $\mathbf{b}_k \in \mathbb{R}^n$, and $\mathbf{a}_k \in \mathbb{R}$, such that
	\begin{itemize}
		\item[{\bf (I)}] $F(\mathrm{M}_{k},0)=0$.
		\item[{\bf (II)}] $\|u-\mathfrak{p}_{k}\|_{L^{\infty}(\overline{\mathrm{B}^{+}_{\rho^{k}}})}\leq \rho^{k(2+\alpha)}$.
		\item[{\bf (III)}] $|\mathbf{a}_{k}-\mathbf{a}_{k-1}|+\rho^{k-1}\|\mathbf{b}_{k}-\mathbf{b}_{k-1}\|+\rho^{2(k-1)}\|\mathrm{M}_{k}-\mathrm{M}_{k-1}\|\leq \tilde{\mathrm{C}}\rho^{2(k-1)(2+\alpha)}$,
	\end{itemize}
	where $\mathfrak{p}_{0}\equiv \mathfrak{p}_{-1}\equiv 0$, and $\rho\in(0,1)$ is a universal constant.

\end{statement}

	We will prove such a statement by induction on $k$. For the case $k=0$, we set $\mathfrak{p}_{-1}\equiv \mathfrak{p}_{0}\equiv 0$, $F(0,0)=0$ and $u$ is normalized. Now, suppose that we have already constructed $\mathfrak{p}_{0}, \cdots, \mathfrak{p}_{k}$ satisfying statements $(\mathrm{I})-(\mathrm{III})$. Thus, we must show that there exists a quadratic polynomial $\mathfrak{p}_{k+1}$ satisfying such conditions. For this purpose, we define the auxiliary function given by
	\begin{eqnarray*}
		v_{k}(x)=\frac{(u-\mathfrak{p}_{k})(\rho^{k}x)}{\rho^{k(2+\alpha)}}, \ x\in\mathrm{B}^{+}_{1}\cup \mathrm{T}_{1}.
	\end{eqnarray*}
	Thus, we will prove that $v_k$ satisfies the assumptions of Lemma \ref{lemadeaprox}. In fact, note that by the induction hypothesis, $\|v_{k}\|_{L^{\infty}(\mathrm{B}^{+}_{1})}\leq 1$. Now, note that $v_{k}$ is a viscosity solution of
	\begin{eqnarray*}
		\left\{
		\begin{array}{rcl}
			F_{k}(D^{2}v_{k},x) &=& f_{k}(x) \quad \mbox{in} \,\,   \mathrm{B}^{+}_{1}\\
			\beta_{k} \cdot Dv_{k} + \gamma_{k} v_{k}&=& g_{k}(x)  \quad \mbox{on} \,\,  \mathrm{T}_{1},
		\end{array}
		\right.,
	\end{eqnarray*}
	where
	\begin{eqnarray*}
		\left\{
		\begin{array}{rcl}
			F_{k}(\mathrm{M},x)& \defeq& \frac{1}{\rho^{k\alpha}}\left( F\left(\rho^{k\alpha}\mathrm{M}+\mathrm{M}_k, \rho^{k}x\right)-F(\mathrm{M}_{k},\rho^{k}x)\right),\\
			f_{k}(x)&\defeq& \frac{1}{\rho^{k\alpha}}\left( f(\rho^{k\alpha}x)-F(\mathrm{M}_{k},\rho^{k}x)\right),\\
			\beta_{k}(x) &\defeq& \beta(\rho^{k}x), \\
			\gamma_{k}(x)&\defeq&\rho^{k} \gamma(\rho^{k}x),\\
			g_{k}(x)&\defeq& \frac{1}{\rho^{k(1+\alpha)}}[g(\rho^{k}x)-\beta_{k}(x)\cdot D\mathfrak{p}_{k}(\rho^{k}x)-\gamma(\rho^{k}x)\mathfrak{p}_{k}(\rho^{k}x)].
		\end{array}
		\right.
	\end{eqnarray*}

	Now, by construction, and by the structural hypothesis {\bf (H1)}, we have that $F_{k}(0,x)=0$ for all $x\in\mathrm{B}^{+}_{1}\cup \mathrm{T}_{1}$, and the associated problem to the operator $F_{k}$ also satisfies the condition {\bf (H1)}, with the same constant $\mathrm{C}^{\sharp}$ (in particular, the condition {\bf ($\sharp$)} holds true for $F_{k}$, with the same constant $\mathrm{C}^{\sharp}$). Moreover, as in \cite[Theorem 8.1]{CC}, we can see that
	\begin{eqnarray*}
\tilde{\Phi}_{F_{k}}(x)\leq 2\rho^{-k\alpha}\tilde{\Phi}_{F}(\rho^{k}x)(1+\|\mathrm{M}_{k}\|)
\end{eqnarray*}
Now, by $(\mathrm{III})$, we have that for $i\geq k$
\begin{eqnarray*}
\|\mathrm{M}_{k}\|\leq \frac{\tilde{\mathrm{C}}}{1-\rho^{\alpha}}\leq \tilde{\mathrm{C}}.
\end{eqnarray*}
Hence, by combining the two facts above, it follows that
\begin{eqnarray}\label{condicao2}
\|\tilde{\Phi}_{F_{k}}\|_{L^{n}(\mathrm{B}^{+}_{1})}\leq 2(1+\tilde{\mathrm{C}})\rho^{-k\alpha}\rho^{-k}\|\tilde{\Phi}_{F}\|_{L^{n}(\mathrm{B}^{+}_{\rho^{k}})}\leq 2^{1-\frac{1}{n}}(1+\tilde{\mathrm{C}})\delta \omega_{n}^{\frac{1}{n}}\leq \varepsilon,
\end{eqnarray}
where we used \eqref{condicao1}. Similarly, by using the induction hypothesis $(\mathrm{I})$, we have
\begin{eqnarray}\label{condicao3}
\|f_{k}\|_{L^{n}(\mathrm{B}^{+}_{1})}&\leq& \rho^{-k(1+\alpha)}\left(\|f\|_{L^{n}(\mathrm{B}^{+}_{\rho^{k}})}+(1+\tilde{\mathrm{C}})\|\tilde{\Phi}_{F}\|_{L^{n}(\mathrm{B}^{+}_{\rho^{k}})}\right)\nonumber\\
&\leq&\omega_{n}^{\frac{1}{n}}\delta2^{1-\frac{1}{n}}(1+\tilde{\mathrm{C}})\leq \varepsilon.
	\end{eqnarray}

	Finally, it is clear that $\beta_{k}, \gamma_{k}, g_{k} \in C^{1,\alpha}(\overline{\mathrm{T}_{1}})$, and by the conditions on $\beta$, $\gamma$, and $g$, we have
	\begin{eqnarray}\label{condicao4}
		\|g_{k}\|_{L^{\infty}(\mathrm{T}_{1})}&\leq&\rho^{-k(1+\alpha)}[\|g\|_{L^{\infty}(\mathrm{T}_{\rho^{k}})}+\|\gamma\|_{L^{\infty}(\mathrm{T}_{\rho^{k}})}\|\mathfrak{p}_{k}\|_{L^{\infty}(\mathrm{T}_{\rho^{k}})}+\nonumber\\
		&+&\|\beta\|_{L^{\infty}(\mathrm{T}_{\rho^{k}})}\|D\mathfrak{p}_{k}\|_{L^{\infty}(\mathrm{T}_{\rho^{k}})}] \nonumber\\
		&\leq& \|g\|_{C^{1,\alpha}(\overline{\mathrm{T}_{1}})}+\mathrm{C}_{\gamma}\rho^{k(\alpha_{\gamma}-\alpha)}\|\mathfrak{p}_{k}\|_{L^{\infty}(\mathrm{T}_{\rho^{k}})}+\nonumber\\
		&+&\mathrm{C}_{\beta}\rho^{k(\alpha_{\beta}-\alpha)}\|D\mathfrak{p}_{k}\|_{L^{\infty}(\mathrm{T}_{\rho^{k}})}.
	\end{eqnarray}

	Now, since hypothesis $(\mathrm{III})$ holds, we can bound the $L^{\infty}$-norm of $\mathfrak{p}_{k}$, as well as the corresponding norm of its gradient, and obtain that
	\begin{eqnarray*}
		\|\mathfrak{p}_{k}\|_{L^{\infty}(\mathrm{T}_{\rho^{k}})}\leq \frac{3\tilde{\mathrm{C}}}{1-\rho^{2+\alpha}} \quad \mbox{and} \quad \|D\mathfrak{p}_{k}\|_{L^{\infty}(\mathrm{T}_{\rho^{k}})}\leq \frac{2\tilde{\mathrm{C}}}{1-\rho^{1+\alpha}}.
	\end{eqnarray*}

Thus, it follows from \eqref{condicao4}, the assumption $\|g\|_{C^{1,\alpha}(\overline{\mathrm{T}_{1}})}\leq \varepsilon$, and the choice of the constant $\tilde{\mathrm{C}}$, that the following holds
	\begin{eqnarray}\label{condicao5}
		\|g_{k}\|_{L^{\infty}(\mathrm{T}_{1})}\leq \varepsilon+40\tilde{\mathrm{C}}\max\{\mathrm{C}_{\beta},\mathrm{C}_{\gamma}\}\leq \varepsilon+2\varepsilon=3\varepsilon.
	\end{eqnarray}

	Therefore, under the hypotheses of Lemma \ref{lemadeaprox}, we obtain the existence of constants  $\mathrm{C}^{\prime}$ and $\theta$, and a function $\mathfrak{h}\in C^{2}(\overline{\mathrm{B}^{+}_{\frac{3}{4}}})$ such that
	\begin{eqnarray}\label{condicao6}
		\|v_{k}-\mathfrak{h}\|_{L^{\infty}(\mathrm{B}^{+}_{\frac{3}{4}})}\leq \mathrm{C}^{\prime}(\varepsilon^{\theta}+4\varepsilon)\leq 5\mathrm{C}^{\prime}\varepsilon^{\theta}\leq \frac{1}{2}\rho^{2+\alpha},
	\end{eqnarray}
	where we used the assumption \eqref{Esc2}, as well as the fact that $\varepsilon, \theta\in(0,1)$.

Now, remember that, by Lemma \ref{lemadeaprox}, $\mathfrak{h}$ satisfies, in viscosity sense
	\begin{eqnarray}\label{problemaenvolvendoh}
		\left\{
		\begin{array}{rclcl}
			F_{k}(D^{2}\mathfrak{h},0) & = &  0 &\mbox{in}&   \mathrm{B}^{+}_{\frac{7}{8}} \\
			\mathfrak{h} & = &  v_{k} &\mbox{on}& \partial \mathrm{B}^{+}_{\frac{7}{8}}\setminus \mathrm{T}_{\frac{7}{8}}\\
			\beta_{k}\cdot D\mathfrak{h}+\gamma_{k} \mathfrak{h} & = &  0  &\mbox{on}&  \mathrm{T}_{\frac{7}{8}},
		\end{array}
		\right.
	\end{eqnarray}
	Furthermore, we can check that $F_k$ satisfies the assumption required in {\bf(H1)}, thus
	\begin{eqnarray*}
		\|\mathfrak{h}\|^{\ast}_{C^{2,\alpha_{0}}\left(\overline{\mathrm{B}^{+}_{\frac{49}{64}}}\right)}\leq \mathrm{C}^{\sharp}\|\mathfrak{h}\|_{L^{\infty}\left(\mathrm{B}^{+}_{\frac{7}{8}}\right)}\leq\mathrm{C}^{\sharp}.
	\end{eqnarray*}

	Now, consider $\overline{\mathfrak{p}}(x)= \frac{1}{2}x^{t} \cdot \overline{\mathrm{M}} \cdot x + \overline{\mathbf{b}}\cdot x + \overline{\mathbf{a}}$, where
$$
\overline{\mathbf{a}}=\mathfrak{h}(0),\quad  \overline{\mathbf{b}}=D\mathfrak{h}(0) \quad  \text{and} \quad \overline{\mathrm{M}}=D^{2}\mathfrak{h}(0).
$$
With the choices performed in \eqref{Esc1} and \eqref{Esc2}, we know by Taylor formula with the Lagrange remainder
\begin{eqnarray} \label{condicao7}
\|\mathfrak{h}-\overline{\mathfrak{p}}\|_{L^{\infty}(\mathrm{B}^{+}_{\rho})}&\leq&\frac{1}{2}\rho^{2}\sup_{x, y \in \overline{\mathrm{B}^{+}_{\rho}} \atop{x \neq y}}\frac{\|D^{2}h(x)-D^{2}h(x)\|}{|x-y|^{\alpha_{0}}}\sup_{x, y \in \overline{\mathrm{B}^{+}_{\rho}} \atop{x \neq y}}|x-y|^{\alpha_{0}}\nonumber
\\  &\leq&\frac{1}{2}\mathrm{C}^{\sharp}\left(\frac{64}{49}\right)^{2+\alpha_{0}}\rho^{2+\alpha_{0}}\nonumber\\
&\leq& \frac{3}{2}\mathrm{C}^{\sharp}\rho^{2+\alpha_{0}}\nonumber\\
&\leq& \frac{1}{2}\rho^{2+\alpha}.
\end{eqnarray}

	Therefore, from \eqref{condicao6} and \eqref{condicao7}, it follows that
	\begin{eqnarray}
		\|v_{k}-\overline{\mathfrak{p}}\|_{L^{\infty}(\mathrm{B}^{+}_{\rho})}&\leq& \|v_{k}-\mathfrak{h}\|_{L^{\infty}(\mathrm{B}^{+}_{\rho})} + \|\mathfrak{h}-\overline{\mathfrak{p}}\|_{L^{\infty}(\mathrm{B}^{+}_{\rho})} \nonumber \\ &\le& \frac{1}{2} \rho^{2+\alpha} + \frac{1}{2} \rho^{2+\alpha} = \rho^{2+\alpha}.\label{condicao8}
	\end{eqnarray}
	Hence, in view of \eqref{condicao8}, for all $x \in B^+_{\rho}$, we obtain
	\begin{equation} \label{Esc3}
		\left | \frac{u(\rho^{k}x)-\mathfrak{p}_{k}(\rho^{k}x)}{\rho^{k(2+\alpha)}} -\overline{\mathfrak{p}}(x)  \right | \le \rho^{2+\alpha},
	\end{equation}
	or equivalently, if $y=\rho^k x$, then for any $y \in B^+_{\rho^{k+1}}$,
	$$
	|u(y) - \mathfrak{p}_k(y) - \rho^{k(2+\alpha)} \overline{\mathfrak{p}}(\rho^{-} y)| \le \rho^{(k+1)(2+\alpha)}.
	$$
Now, we	consider the quadratic polynomial function as follows
	$$
\mathfrak{p}_{k+1}(y)=\mathfrak{p}_{k}(y)+\rho^{k(2+\alpha)}\overline{\mathfrak{p}}(\rho^{-k}y).
$$
Thus, by \eqref{Esc3}, we conclude that
	$$
	\|u-\mathfrak{p}_{k+1}\|_{L^{\infty}\left(\overline{B^+_{\rho^{k+1}}}\right)} \le \rho^{(k+1)(2+\alpha)},
	$$
thereby establishing the condition $(\mathrm{II})$ via induction. Moreover, note that, by the construction of such polynomials, the condition $(\mathrm{I})$ is naturally satisfied, as well as the coefficients satisfy the condition $(\mathrm{III})$. This proves the desired statement. With such an assertion, analogously to \cite[Theorem 8.1]{CC}, the proof is completed.
	\end{proof}

In conclusion, by combining the above theorem with \cite[Theorem 8.1]{CC}, we obtain the following result:
\begin{theorem}[{\bf Schauder estimates under oblique boundary conditions}]
	Let $F$ be a uniformly elliptic operator satisfying the condition ${\bf (H1)}$. Suppose further  $f, \beta, \gamma, g$ satisfy the structural condition {\bf (H2)}, and consider $0<\alpha<\min\{\alpha_{0},\alpha_{\beta},\alpha_{\gamma},\alpha_{g}\}$. Then, $u\in C^{2,\alpha}\left(\overline{\mathrm{B}^{+}_{\frac{1}{2}}}\right)$ with the following estimate
	\begin{eqnarray*} \|u\|_{C^{2,\alpha}\left(\overline{\mathrm{B}^{+}_{\frac{1}{2}}}\right)}\leq\mathrm{C}\left(\|u\|_{L^{\infty}(\mathrm{B}^{+}_{1})}+\|f\|_{C^{0, \alpha}(\mathrm{B}^{+}_{1})}+\|g\|_{C^{1,\alpha_g}(\overline{\mathrm{T}_{1}})}\right),
	\end{eqnarray*}
	for some universal constant $\mathrm{C}>1$.
\end{theorem}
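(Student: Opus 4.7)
The plan is to upgrade the pointwise $C^{2,\alpha}$ control at the origin provided by Theorem \ref{ThmSchauder} to a uniform pointwise control at every point of $\overline{\mathrm{B}^{+}_{1/2}}$, and then assemble these local quadratic approximations into a genuine $C^{2,\alpha}(\overline{\mathrm{B}^{+}_{1/2}})$ estimate via the classical Caffarelli--Cabr\'{e} mechanism in \cite[Theorem 8.1]{CC}. First, I would normalize: replacing $u$ by $u/\mathrm{K}$ with $\mathrm{K} \defeq \|u\|_{L^{\infty}(\mathrm{B}^{+}_{1})}+\|f\|_{C^{0,\alpha}(\mathrm{B}^{+}_{1})}+\|g\|_{C^{1,\alpha_g}(\overline{\mathrm{T}_{1}})}$, one may assume all three quantities are at most $1$; the final estimate follows by rescaling. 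The modifications to $F$ ensuring $F(\mathcal{O},0)=0$ and $f(0)=0$ cost only bounded constants, as in the remark following Theorem \ref{ThmSchauder}.

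Second, I would apply Theorem \ref{ThmSchauder} point-by-point on the flat boundary. For each $y\in \mathrm{T}_{1/2}$, translate the origin to $y$ and consider $\tilde{u}(x) \defeq u(y+x)$, $\tilde F(\mathrm{M},x) \defeq F(\mathrm{M},y+x)$, $\tilde\beta(x) \defeq \beta(y+x)$, etc.\ on $\mathrm{B}^{+}_{1/2}$. Under {\bf (H1)}--{\bf (H2)} the translated operator still satisfies the $C^{2,\alpha_0}$ a priori hypothesis and the translated boundary data still have the decay built into {\bf (H2)}; the oscillation hypothesis
\begin{equation*}
\Bigl(\intav{\mathrm{B}^{+}_{r}}|\tilde{\Phi}_{\tilde F}(x)|^{n}dx\Bigr)^{1/n}\leq \mathrm{C}_{0}r^{\alpha},\quad \Bigl(\intav{\mathrm{B}^{+}_{r}}|\tilde f(x)|^{n}dx\Bigr)^{1/n}\leq \mathrm{C}_{1}r^{\alpha}
\end{equation*}
for $r\in (0, r_0)$ with $r_0 \defeq 1/2$ follows from the Hölder continuity of $f$ in $x$ and from the fact that $\tilde\Phi_F(\cdot,y)$ is controlled by $\|F(\mathrm{M},\cdot)\|_{C^{0,\alpha}}$ after normalization by $1+\|\mathrm{M}\|$. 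Theorem \ref{ThmSchauder} then yields a quadratic polynomial $\mathfrak{p}_y(x)=\mathbf{a}(y)+\mathbf{b}(y)\cdot x+\tfrac12 x^{t}\mathrm{M}(y)x$ with $|\mathbf{a}(y)|+\|\mathbf{b}(y)\|+\|\mathrm{M}(y)\|\le \mathrm{C}$ and
\begin{equation*}
\|u-\mathfrak{p}_y\|_{L^\infty(\mathrm{B}^{+}_{r}(y))}\le \mathrm{C}\,r^{2+\alpha},\qquad r\in (0,r_1],
\end{equation*}
with $\mathrm{C}$ and $r_1$ universal.

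Third, for an interior point $y\in \mathrm{B}^{+}_{1/2}$ with $d(y) \defeq \mathrm{dist}(y,\mathrm{T}_1)>0$, I would split into two cases according to whether $d(y)\ge r_1/2$ or $d(y)< r_1/2$. In the first case, the classical interior Schauder estimate from \cite[Theorem 8.1]{CC}, applied in $\mathrm{B}_{d(y)/2}(y)\subset \mathrm{B}^{+}_{1}$, delivers a quadratic polynomial $\mathfrak{p}_y$ with the same pointwise $C^{2,\alpha}$-type bound. In the second case, let $y'\in \mathrm{T}_{1/2}$ be the nearest boundary point; then the quadratic $\mathfrak{p}_{y'}$ produced in the previous step already controls $u$ on the ball $\mathrm{B}_{r_1}(y')\supset \mathrm{B}_{r_1/2}(y)$, and transferring the pointwise estimate from $y'$ to $y$ costs only a universal factor.

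Finally, the assembly step follows the classical pattern of \cite[Theorem 8.1]{CC}: given $y_1,y_2\in \overline{\mathrm{B}^{+}_{1/2}}$, compare the two quadratic approximations $\mathfrak{p}_{y_1},\mathfrak{p}_{y_2}$ on the ball $\mathrm{B}_{r}(y_1)\cap \mathrm{B}_{r}(y_2)$ for $r = |y_1-y_2|$. The pointwise estimates force
\begin{equation*}
\|\mathfrak{p}_{y_1}-\mathfrak{p}_{y_2}\|_{L^\infty(\mathrm{B}_{r}(y_1))}\le \mathrm{C}\, r^{2+\alpha},
\end{equation*}
and a standard application of the equivalence of norms on the finite-dimensional space of quadratic polynomials turns this into
\begin{equation*}
|\mathbf{a}(y_1)-\mathbf{a}(y_2)|+r\|\mathbf{b}(y_1)-\mathbf{b}(y_2)\|+r^{2}\|\mathrm{M}(y_1)-\mathrm{M}(y_2)\|\le \mathrm{C}\,r^{2+\alpha}.
\end{equation*}
Reading off the third piece yields $\|D^{2}u(y_1)-D^{2}u(y_2)\|\le \mathrm{C}|y_1-y_2|^{\alpha}$, which is precisely the $C^{2,\alpha}\left(\overline{\mathrm{B}^{+}_{1/2}}\right)$ seminorm estimate; combined with the uniform bounds $|\mathbf{a}(y)|+\|\mathbf{b}(y)\|+\|\mathrm{M}(y)\|\le \mathrm{C}$ the full norm estimate stated in the theorem follows. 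I expect the main technical obstacle to be the case $d(y)<r_1/2$ in the third step, where one must carefully verify that the oscillation and decay hypotheses on $f$, $\beta$, $\gamma$, $g$ transfer, with universal constants, to the translated problem at a point $y'\in \mathrm{T}_{1/2}$; this uses the full strength of {\bf (H2)} (the vanishing of $\beta,\gamma$ at the origin is built into the decay condition) and the fact that $\tilde{\Phi}_F$ remains small on small balls centered anywhere inside $\mathrm{B}^{+}_{1/2}$.
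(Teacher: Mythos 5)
Your proposal follows essentially the same route as the paper: the paper's entire proof is the one-line observation that the pointwise $C^{2,\alpha}$ estimate of Theorem \ref{ThmSchauder} combined with the interior estimates and patching mechanism of \cite[Theorem 8.1]{CC} yields the $C^{2,\alpha}\left(\overline{\mathrm{B}^{+}_{\frac{1}{2}}}\right)$ bound, which is exactly the normalize / apply-the-pointwise-theorem-at-boundary-points / interior-case / compare-quadratic-polynomials scheme you spell out. The caveat you rightly flag at the end---that the oscillation and decay hypotheses of {\bf (H2)} and of Theorem \ref{ThmSchauder} are stated only at the origin and must be made available, with uniform constants, at every point of $\mathrm{T}_{\frac{1}{2}}$---is glossed over in the paper as well, so your treatment is at the same level of rigor as the paper's own argument.
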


\subsection*{Acknowledgments}

\hspace{0.4cm} This manuscript is part of the first author's Ph.D. thesis. He would like to express gratitude to the Department of Mathematics at the Universidade Federal do Cear\'{a} for fostering a pleasant and productive scientific atmosphere, which contributed to the successful outcome of this project. J.S. Bessa was partially supported by CAPES-Brazil under Grant No. 88887.482068/2020-00. J.V. da Silva and G.C. Ricarte have been partially supported by CNPq-Brazil under Grant No. 307131/2022-0, and No. 304239/2021-6.  J.V. da Silva has been partially supported by  FAEPEX-UNICAMP 2441/23 Editais Especiais - PIND - Projetos Individuais (03/2023). Part of this work was developed during the \textit{Fortaleza Conference on Analysis and PDEs} (2022) at the Universidade Federal do Cear\'{a} (UFC-Brazil). J.V. da Silva would like to thank to UFC's Department of Mathematics for fostering a pleasant scientific and research atmosphere during his visits in the Summer of 2022 and Spring of 2023.

\vspace{1cm}
\noindent  \textsc{Junior da S. Bessa} \hfill  \\
\hfill Universidade Federal Cear\'a  \\
\hfill Department of Mathematics \\
\hfill Fortaleza, CE-Brazil 60455-760\\
\hfill \texttt{junior.bessa@alu.ufc.br}\\
\vspace{0.5cm}

\noindent  \textsc{Jo\~{a}o Vitor  da Silva} \hfill  \\
\hfill  Universidade Estadual de Campinas - UNICAMP \\
\hfill Instituto de Matem\'{a}tica, Estat\'{i}stica e Computa\c{c}\~{a}o Cient\'{i}fica - IMECC\\
\hfill Departamento de Matem\'{a}tica \\
\hfill Rua S\'{e}rgio Buarque de Holanda, 651 \\
\hfill Campinas - SP, Brazil 13083-859\\
\hfill \texttt{jdasilva@unicamp.br}\\
\vspace{0.5cm}

\noindent  \textsc{Gleydson C. Ricarte} \hfill  \\
\hfill  Universidade Federal Cear\'a  \\
 \hfill Department of Mathematics \\
\hfill Fortaleza, CE-Brazil 60455-760\\
 \hfill \texttt{ricarte@mat.ufc.br}\\

\end{document}